\newcommand{\tub}{\mathbf{tube}}
\newcommand{\mc}[1]{\mathcal {#1}}
\newcommand{\m}{\!*\!{\!_M}}
\newcommand{\mbc}{\mathbb{C}}
\definecolor{astral}{RGB}{46,116,181}
\newtheorem{theorem}{Theorem}[section]
\newtheorem{lemma}[theorem]{Lemma}
\newtheorem{corollary}[theorem]{Corollary}
\newtheorem{proposition}[theorem]{Proposition}
\newtheorem{definition}[theorem]{Definition}
\newtheorem{example}[theorem]{Example}
\newcommand\reallywidetilde[1]{\ThisStyle{
  \setbox0=\hbox{$\SavedStyle#1$}%
  \stackengine{-.1\LMpt}{$\SavedStyle#1$}{
    \stretchto{\scaleto{\SavedStyle\mkern.2mu\AC}{.30\wd0}}{.4\ht0}%
  }{O}{c}{F}{T}{S}%
}}
\def\wtilde#1{
  \reallywidetilde{#1}}
\definecolor{lime}{HTML}{A6CE39}
\definecolor{lightblue}{rgb}{0.0, 0.0, 0.5}
\DeclareRobustCommand{\orcidicon}{%
	\begin{tikzpicture}
	\draw[lime, fill=lime] (0,0)
	circle [radius=0.16]
	node[white] {{\fontfamily{qag}\selectfont \tiny ID}};
	\draw[white, fill=white] (-0.0625,0.095)
	circle [radius=0.007];
	\end{tikzpicture}
	\hspace{-2mm}
}
\xdef\csname orcid\x\endcsname{\noexpand\href{https://orcid.org/\csname orcidauthor\x\endcsname}{\noexpand\orcidicon}}
\begin{document}
\begin{frontmatter}
\title{%$M$-product based 
Two-step parameterized tensor-based iterative methods for solving  $\mc{A}\m\mc{X}\m\mc{B}=\mc{C}$
}
\author{Ratikanta Behera$^a$, Saroja Kumar Panda$^{\dagger}$$^b$, Jajati Keshari Sahoo$^{\dagger}$$^c$}

\address{
 $^{a}$Department of Computational and Data Sciences, \\
 IISc, Bangalore, India.\\
\textit{E-mail}: \texttt{ratikanta@iisc.ac.in}\\

\vspace{.3cm}
 $^{\dagger}$Department of Mathematics,\\
Birla Institute of Technology Pilani, K.K. Birla Goa Campus, Zuarinagar, Goa, India
\\\textit{E-mail\,$^b$}: \texttt{p20170436@goa.bits-pilani.ac.in}\\
\textit{E-mail\,$^c$}: \texttt{jksahoo@goa.bits-pilani.ac.in  }\\

}

\begin{abstract}
Iterative methods based on tensors have emerged as powerful tools for solving tensor equations, and have significantly advanced across multiple disciplines. In this study, we propose two-step tensor-based iterative methods to solve the tensor equations $\mc{A}\m\mc{X}\m\mc{B}=\mc{C}$ 
by incorporating preconditioning techniques and parametric optimization to enhance convergence properties. The theoretical results were complemented by comprehensive numerical experiments that demonstrated the computational efficiency of the proposed two-step parametrized iterative methods. The convergence criterion for parameter selection has been studied and a few numerical experiments have been conducted for optimal parameter selection. Effective algorithms were proposed to compute iterative methods based on two-step parameterized tensors, and the results are promising. In addition, we discuss the solution of the Sylvester equations and a regularized least-squares solution for image deblurring problems.  

\end{abstract}

\begin{keyword}
$M$-product; tensor inversion, multilinear system, tensor-splittings, parameterized iterative methods.
\end{keyword}
\end{frontmatter}
\section{Introduction}
A tensor is a mathematical object that generalizes scalars, vectors, and matrices to arbitrary dimensions, that is, the $n$-dimensional array of elements and each element is specified by the $n$ coordinates \cite{Wensheng,Jidrazin, Ragnarsson, SahooPandaBehera25, Stanimirovi}. In particular, a scalar is a single element ($0$th-order tensor), and a vector (column vector or row vector) is a one-dimensional array of elements ($1$st-order tensor). A matrix is a $2$nd-order tensor. A $3$rd-order tensor can be visualized as a cube of elements, and each element is specified by three coordinates.  Operations on tensors are the fundamental challenge, as they do not possess a canonical multiplication framework. Specifically, tensor products ( Einstein product \cite{Einstein} and the n-mode product \cite{Bader,qi2017}, M-product \cite{Kernfeldlinear}, $c$-product \cite{Kernfeldlinear}, $t$-product \cite{kilmer13,kilmer11}, and Shao's general product \cite{Shao1} ) present distinct mathematical frameworks, each characterized by unique computational paradigms and the resultant properties that require meticulous consideration in their application in various computational domains in machine learning and data science \cite{miwakeichi}.

We use calligraphic notation (e.g., $\mc{A}, \mc{B}, \mc{C}$, etc.) for tensors.  The elements of a third-order tensor $\mc{A}$ are represented as per the following:
\begin{equation*}
(\mc{A})_{ijk}=\mc{A}(i,j,k)=a_{ijk}, ~~~ 1\leq i \leq m, ~~1\leq j \leq n, ~~1\leq k \leq p
\end{equation*}
The collection of all these tensors defined over the complex field is represented as $\mathbb{C}$, or for the real field, we use \(\mathbb{R}\), specifically denoted as \(\mathbb{C}^{m \times n \times p}\) (or \(\mathbb{R}^{m \times n \times p}\)). In this context, the \(k\)th frontal slice of a tensor \(\mathcal{A} \in \mathbb{C}^{m \times n \times p}\) is expressed as \(\mathcal{A}^{(k)} = \mathcal{A}(:, :, k)\). Furthermore, the tube fibers of the tensor $\mathcal{A}$ can be expressed as $\mathcal{A}(i, j, :)$, $\mathcal{A}(i, :, k)$, or $\mathcal{A}(:, j, k)$.  
Brazell et al.'s pioneering work \cite{Brazell} on tensor inverse computations using the Einstein product framework established foundational methods for solving multilinear systems \cite{AsishRJ, behera2023computing,Jidrazin}. The field undergone significant advancements with Kilmer and Martin's introduction of $t$-product-based tensor multiplication \cite{Braman10,kilmer13,kilmer11}. It is essential to highlight that the $t$-product mentioned in \cite{kilmer13} arises from using the unnormalized DFT matrix as the matrix $M$. On the other hand, the $c$-product in \cite{kilm21} is created by defining $M$ as $W^{-1}C(I + Z)$. In this expression, $C$ is the Discrete Cosine Transform (DCT) matrix of size $n$,  $W$ is a diagonal matrix that takes the first column of $C$ and $Z=\mathrm{diag}(\text{ones}(n-1,1),1)$. We next define the face-product of the tensors $\mc{A}\in \mbc^{m\times n\times p}$ and $\mc{B}\in \mbc^{n\times k\times p}$ is denoted by $\mc{A}\Delta\mc{B}$ and defined element-wise as 
\begin{equation*}
(\mc{A}\Delta\mc{B})(:,:,i)=\mc{A}(:,:,i)\mc{B}(:,:,i),~i=1,2,\ldots,p.
\end{equation*}
Now we define the $M$-product of two tensors as follows:

\begin{definition}\label{DeftProd}{\rm \cite{Kernfeldlinear}}
Let $M\in\mathbb{C}^{p \times p}$ be an invertible matrix. The $M$-product of two tensors $\mathcal{A} \in \mathbb{C}^{m \times n \times p}$ and $\mathcal{B} \in \mathbb{C}^{n \times k \times p}$, produces a new tensor, denoted by $\mathcal{A}_{*M}\mathcal{B} \in \mathbb{C}^{m \times k \times p}$. This operation combines the two tensors in a specific manner, and is defined as follows:
\begin{equation*}
\mathcal{A}_{*M}\mathcal{B}=((\mathcal{A}\times_3 M)\Delta (\mathcal{B}\times_3 M))\times_3 M^{-1}, 
\end{equation*}
where $\mathcal{A}\times_3 M$ is called the $3$-mode product of  ${\mathcal{A}} \in \mathbb{C}^{m \times n \times p}$ and $M\in\mathbb{C}^{p \times p}$, which is defined element-wise as follows
\begin{equation*}
(\mathcal{A}\times_3 M)(i_1,i_2,k)=\sum_{i_3=1}^p \mathcal{A}(i_1,i_2,i_3)M(k,i_3)\quad \text{where}\quad k\in\{1,\ldots,p\}.
\end{equation*}
\end{definition}

The evolution of iterative methods for solving matrix equations has seen significant advancements in recent years. Building upon classical Jacobi and Gauss-Seidel matrix splitting techniques, researchers have developed increasingly sophisticated approaches to solve the matrix equation $AXB=C$, where the coefficients $A \in \mathbb{C}^{m \times m}$, $B\in \mathbb{C}^{n \times n}$, and $C \in \mathbb{C}^{m \times n}$ are known matrices and $X$ is the unknown matrix to be determined. These linear systems arise naturally as mathematical representations of diverse phenomena in engineering \cite{BouJb08,DuRuan22,Regalia} and scientific domains \cite{DuRuan22,WangLi13,ZhouDuan08}. Liu et al. \cite{TianLiu17} advanced the field by developing substantial improvement of iteration methods based on induced splitting of matrices $A$ and $B$, incorporating a flexible parametric framework that demonstrated significant computational advantages over the traditional progressive iterative approximation (PIA) method \cite{LiuGuo18} in computer-aided geometric design. Further, the authors of \cite{Chen21} discussed a two-step AOR iteration method to solve this equation. Recursive blocked algorithms \cite{ChenMin20} represented a significant advancement in the solution of Sylvester matrix equations and their generalizations.

Next, we consider the tensor representation of a multilinear system of the form
\begin{equation}\label{maineq}
\mc{A}\m\mc{X}\m\mc{B}=\mc{C},
\end{equation}
where $\mc{A}\in \mathbb{C}^{m \times m\times p}$, $\mc{B} \in \mathbb{C}^{n \times n\times p}$ both are non singular. The tensor equation \ref{maineq} represents a sophisticated mathematical framework that extends classical matrix operations to higher-dimensional spaces, providing a powerful formalism for modeling complex multidimensional relationships. Its fundamental structure enables the representation of intricate interactions between multiple variables and parameters in tensor form, surpassing traditional matrix-based approaches in capturing higher-order correlations and dependencies.

This leads researchers to develop various numerical methods \cite{behera2023computation,mqdr,SahooPandaBehera25}. These approaches can be broadly categorized into iterative methods, progressively refining approximations through systematic updates, and direct methods, which attempt to compute exact solutions through finite algebraic operations. While direct methods offer precise solutions under specific conditions, iterative approaches often provide greater flexibility and computational efficiency for large-scale problems, particularly when exact solutions are not required or are computationally prohibitive. This complementary development of both methodological paradigms has significantly enhanced our ability to address increasingly complex mathematical modeling challenges across various scientific and engineering domains. The tensor formulation proves particularly valuable in scenarios where traditional matrix approaches become insufficient to capture the full complexity of multidimensional data relationships \cite{SahooPandaBehera25}. This advancement has opened new avenues for applications across diverse fields, from signal processing and data analytics to computer vision and scientific computing, where the ability to model and manipulate high-dimensional data structures is paramount.

The principal contributions of this research can be summarized as follows:

\begin{itemize}
\item We introduce novel two-step iterative methods based on tensor formulations to solve tensor equations of the form $\mc{A}\m\mc{X}\m\mc{B}=\mc{C}$ by incorporating preconditioning techniques and parametric optimization approaches to improve convergence characteristics.

\item We present a detailed investigation of the convergence criteria for parameter selection, including the theoretical bounds for convergence and numerical experiments, to determine the optimal parameter values.

\item In view of two-step parameterized tensor-based iterative methods, efficient $M$-product-based algorithms have been designed to calculate the solutions. A few numerical experiments validated the effectiveness of our proposed methods, and the comparative performance metrics provided quantitative evidence of their efficiency.

\item We extend our methodology to address two significant applications:  the solution of Sylvester equations and the regularized least-squares approach for image deblurring problems.
\end{itemize}

In this paper, the outline of the organization of the content is as follows.\\
Section \ref{Section2} reviews the notation and fundamental concepts of the tensor, nonnegative matrices, matrix splittings, regularization techniques, block tensors, and the essential properties of tensors under various product operations. The core contribution of our work is presented in Section \ref{Section3}, where we introduce a novel parameterized iterative method based on two steps to find the solution of $\mc{A}\m\mc{X}\m\mc{B}=\mc{C}$. Section \ref{sec4} demonstrates the practical utility of our approach through its application to Sylvester equations and image deblurring problems, illustrating the effectiveness of the method. Section \ref{Section5} brings everything together by highlighting our key findings, and suggests exciting paths for future research.

\section{Preliminaries}\label{Section2}
This section provides the mathematical foundation by reviewing several key concepts. We first recall a few definitions to familiarize the $M$-product.  Next, we briefly discuss regularization and nonnegative matrices with their fundamental properties in matrix splittings. The section then delves into the theory of block tensors, presenting their structure and characteristics. Finally, we analyze the basic properties of tensors under various product operations, providing a comprehensive framework for the following developments.
\subsection{Notation and Definitions}
To simplify our presentation, we use additional notation to make subsequent mathematical concepts and operations more concise and easy to understand. For $\mc{A}\in \mbc^{m\times n\times p}$ and $M\in \mbc^{p\times p}$, we define 
\[\hat{\mc{A}}:=\mc{A}\times_{3}M \in \mbc^{m\times n \times p}.\]  It's important to understand that this ``hat" notation is linked to the particular matrix $M$ that we have used for the transformation. From this point forward, we will assume that $M$ is a nonsingular (or invertible) matrix. It is easy to verify that 
\begin{equation}\label{MP}
\mc{A}=\mc{A}\times_3 M\times_3 M^{-1}=\tilde{\mc{A}}\times_3 M^{-1}.
\end{equation}

The tubal spectral norm, also known as the tubal norm, and the tubal  rank \cite{Kernfeldlinear,kilmer13, kilm21} of a tensor are defined as follows:  The tubal norm of $\mc{A} \in \mbc^{m\times n\times p}$ is denoted by $\|\mc{A}\|_M$ and is defined as $\|\mc{A}\|_M=\displaystyle{\max_{1\leq i\leq p}(\|\tilde{\mc{A}}(:,:,i)\|_2)}$, where $M\in\mbc^{p\times p}$ is an arbitrary matrix. Further, the tubal rank is defined as $\max_{1\leq i\leq p}r_i$, where $r_i=\mathrm{rank}(A(:,:,i)$). The tensor $\mc{B}$ is called the transpose conjugate of $\mc{A}$ under the face-product if $\mc{B}(:,:,i)=\left[\mc{A}(:,:,i)\right]^*$ for each $i$, $i=1,2,\ldots,p$ and denoted by $\mc{A}^*_{\Delta}$.  A tensor \(\mathcal{Y} \in \mathbb{C}^{n \times m \times p}\) is referred to as the Moore-Penrose (MP) inverse of the tensor \(\mathcal{A}\) (denoted as \(\mathcal{A}_{\Delta}^{\dagger}\)) when it satisfies the following conditions related to the face-product operation.
\[\mc{A}\Delta\mc{Y}\Delta \mc{A}=\mc{A},~\mc{Y}\Delta\mc{A}\Delta \mc{Y}=\mc{Y},~(\mc{A}\Delta\mc{Y})^*_{\Delta}=\mc{A}\Delta\mc{Y},\mbox{ and }(\mc{Y}\Delta\mc{A})^*_{\Delta}=\mc{Y}\Delta\mc{A}\]
Suppose  $M\in \mbc^{p\times p}$, $\mc{A}\in \mbc^{m\times n\times p}$ and $\mc{B}\in \mbc^{n\times k\times p}$. Using Definition \ref{DeftProd} and equation \eqref{MP} one can write the $M$-product of the tensors $\mc{A}$ and $\mc{B}$ as follows
\begin{equation*}
\mc{A}\m\mc{B}=\left(\tilde{\mc{A}}\Delta\tilde{\mc{B}}\right)\times_3 M^{-1}.
\end{equation*}

\begin{definition}{\rm \cite{Kernfeldlinear}}
Consider $M\in\mbc^{p\times p}$ and  $\mc{A}\in \mbc^{m\times m\times p}$. Then we call $\mc{A}$ is
\begin{enumerate}
   \item[(i)] an identity tensor (denoted by $\mc{I}_{mmp}$) if for each $i$, $\tilde{\mc{A}}(:,:,i)$ is an identity matrix of order $m$.
    \item[(ii)] invertible or nonsingular if there exists a tensor $\mc{Z}$ such that 
    $\mc{A}\m\mc{Z}=\mc{Z}\m\mc{A}=\mc{I}_{mmp}$.
\end{enumerate}
\end{definition}
\begin{definition} \cite{Kernfeldlinear} 
Let $M\in\mbc^{p\times p}$ and $\mc{A}\in \mbc^{m\times n\times p}$. Then 
\begin{enumerate}
    \item[(i)]
$\mc{A}$ is said to be a 
    zero or null tensor (denoted by $\mc{O}$) if all the elements of  $\tilde{\mc{A}}$ are zero. 
    \item[(ii)] $\mc{A}^*=\wtilde{\mc{A}^*}\times_3M^{-1}$, where 
 $\wtilde{\mc{A}^*}=A^*_{\Delta}$.
 \end{enumerate}
\end{definition}
In \cite{jin2023}, the authors have introduced the concepts of the MP inverse under the $M$-product, which is defined in the next. For  $M\in\mbc^{p\times p}$ and  $\mc{A}\in \mbc^{m\times n\times p}$,  a tensor 
$\mc{Y} \in \mathbb{C}^{n\times m \times p}$ satisfying 
{\small{
\begin{eqnarray*}
\mc{A}\m\mc{Y}\m\mc{A} = \mc{A}, ~~\mc{Y}\m\mc{A}\m\mc{Y} = \mc{Y} ~~(\mc{A}\m\mc{Y})^* = \mc{A}\m\mc{Y},~~(\mc{Y}\m\mc{A})^* = \mc{Z}\m\mc{A}.
\end{eqnarray*}
}}
is called the MP inverse of $\mc{A}$ and is 
 denoted by $\mc{A}^{\dagger}.$ On the other hand, if a tensor $\mc{Z}$ satisfying $\mc{A}\m\mc{Z}\m\mc{A} = \mc{A}$
 is called an inner inverse  of $\mc{A}$, which is represented by $\mc{A}^{(1)}$.

\begin{definition}
Consider $M\in\mbc^{p\times p}$ and  $\mc{A}\in \mbc^{m\times n\times p}$. Then $\mc{A}$ is called
\begin{enumerate}
    \item[(i)]   diagonally dominant (strictly diagonally dominant) \cite{Sahoo} if  $\tilde{\mc{A}}(:,:,i)$ is diagonally dominant (strictly diagonally dominant) for all $i$, $1=1,2,\ldots,p$.
     \item[(ii)]  hermitian positive definite (HPD)  if  $\tilde{\mc{A}}(:,:,i)$ is HPD for all $i$, $1=1,2,\ldots,p$.
       \item[(iii)] nonnegative (denoted by $\mc{A}\geq 0$) if   \[(\tilde{\mc{A}})_{ijk}\geq 0 \mbox{ for all }1\leq i\leq m,~1\leq j\leq n,~1\leq k\leq p.\]

\end{enumerate}
 \end{definition}

For $\mc{A}\in \mbc^{m\times m \times p}$ and $M \in \mbc^{p\times p}$, $\lambda$ is called an eigenvalue of $\mc{A}$ if $\mc{A}\m\mc{X}=\lambda\mc{X}$ for some $\mc{X}\neq \mc{O}$. Further, the spectral radius  \cite{SahooPandaBehera25} of $\mc{A}$ under the $M$-produced is defined as 
${\displaystyle \rho (\mc{A})=\max_{1\leq i\leq p}\{\rho (\tilde{\mc{A}}(:,:,i})\}$, where $\rho (\tilde{\mc{A}}(:,:,i))$ is the spectral radius of $\tilde{\mc{A}}(:,:,i))$.

\subsection{Nonnegative matrices, splittings and regularization}
A matrix $A$ is called nonnegative if $a_{ij}\geq 0$ for all $i$ and $j$ varying in its ranges. In short, it is denoted by $A\geq 0$. The splitting of a matrix $A$ is a decomposition of the form $A= F-G$. For nonnegative matrices along with splittings, extensive research has been carried  out to find the approximate solution linear systems of the form $Ax=b$. Some of these can be found in previous studies \cite{berman,varga,wozi,young}.  Let us recall a few useful results of nonnegative matrices which are essential to prove some of our results for tensors under the $M$-product structure. 
\begin{lemma}\cite{varga1,varga}\label{lem-reg}
    Let $A=F-G$ be a regular splitting ( regular: if $F^{-1}\geq 0$ and $G\geq 0$ ) of a nonsingular matrix $A$. Then $A^{-1}\geq 0$ if and only if $\rho(F^{-1}G)<1$.
\end{lemma}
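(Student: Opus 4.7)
The plan is to reduce the statement to a question about the nonnegativity of the resolvent $(I-T)^{-1}$, where $T := F^{-1}G$. Writing $A = F(I-T)$, the nonsingularity of $A$ forces $I-T$ to be invertible, and $A^{-1} = (I-T)^{-1}F^{-1}$. Because $F^{-1} \geq 0$ and $G \geq 0$ by regularity, the iteration matrix $T$ and all of its powers are nonnegative. Thus the biconditional $A^{-1}\geq 0 \iff \rho(T)<1$ collapses to $(I-T)^{-1}\geq 0 \iff \rho(T)<1$, since right multiplication by the nonnegative matrix $F^{-1}$ preserves nonnegativity and, using $F\geq 0$ is not needed, the reverse implication $A^{-1}\geq 0 \Rightarrow (I-T)^{-1}\geq 0$ follows from $(I-T)^{-1} = A^{-1} F \cdot F^{-1} \cdot F = A^{-1} F$ being recoverable provided we work directly with the eigenstructure instead.

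For the easy direction, I would simply invoke the Neumann series: if $\rho(T)<1$ then $(I-T)^{-1} = \sum_{k=0}^{\infty} T^k$ converges absolutely, and each $T^k\geq 0$, so $(I-T)^{-1}\geq 0$. Combined with $F^{-1}\geq 0$ this gives $A^{-1}\geq 0$ immediately.

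For the reverse implication, I plan to invoke the Perron--Frobenius theorem: since $T\geq 0$, there exists a nonzero $x\geq 0$ with $Tx=\rho x$, where $\rho := \rho(T)$. First I rule out $\rho = 1$, for then $Tx = x$ gives $Gx = Fx$ and hence $Ax = 0$, contradicting the nonsingularity of $A$. With $\rho\neq 1$, applying $(I-T)^{-1}$ to the identity $(1-\rho)x = (I-T)x$ yields $(I-T)^{-1}x = (1-\rho)^{-1}x$. Consequently,
\[
A^{-1}Gx \;=\; (I-T)^{-1}F^{-1}Gx \;=\; (I-T)^{-1}Tx \;=\; \frac{\rho}{1-\rho}\,x.
\]
The left-hand side is nonnegative because $A^{-1}, G, x$ are all nonnegative; since $x\geq 0$ and $x\neq 0$, this forces $\rho/(1-\rho)\geq 0$, which (together with $\rho\geq 0$) gives $\rho<1$.

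The principal obstacle is the reverse implication, and specifically the appeal to Perron--Frobenius to secure a nonnegative eigenvector of $T$ for the eigenvalue $\rho(T)$; without this geometric input the sign constraint extracted from $A^{-1}Gx \geq 0$ would not be available. The exclusion of the edge case $\rho(T)=1$ is the only other subtlety, and it is handled cleanly by the nonsingularity hypothesis on $A$. Everything else amounts to routine algebra with the identity $A = F(I-T)$ and manipulation of nonnegative matrices.
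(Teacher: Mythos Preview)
The paper does not supply its own proof of this lemma; it is quoted verbatim from Varga's monograph (see the citation \cite{varga1,varga}) as a background result. So there is no in-paper proof to compare against.

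Your argument is correct and is in fact the classical proof one finds in Varga. The forward direction via the Neumann series $(I-T)^{-1}=\sum_{k\ge 0}T^k$ with $T=F^{-1}G\ge 0$ is standard. For the reverse direction your use of the Perron--Frobenius eigenvector $x\ge 0$, $x\neq 0$, with $Tx=\rho x$, the exclusion of $\rho=1$ via $Ax=F(I-T)x=0$, and the computation $A^{-1}Gx=(I-T)^{-1}Tx=\frac{\rho}{1-\rho}x$ are all sound; the sign constraint then forces $\rho<1$ exactly as you say. Note that the paper records precisely the Perron--Frobenius input you need as its Lemma~\ref{preron}, so you may cite that directly.

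One expository remark: your opening paragraph is muddled. The sentence attempting to reduce the biconditional to $(I-T)^{-1}\ge 0\iff\rho(T)<1$ runs into the problem that $F\ge 0$ is not assumed, and you then abandon that route mid-sentence. Since your actual proof (paragraphs two and three) never needs $(I-T)^{-1}\ge 0$ for the reverse implication---you work with $A^{-1}G$ directly---just delete that reduction and state the plan as: factor $A=F(I-T)$, handle one direction by Neumann series, and the other by applying $A^{-1}G$ to a Perron eigenvector of $T$.
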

\begin{lemma}\cite{perea1,perea}\label{lem-weak}
    Let $A=F-G$ be a weak regular splitting ( weak regular: if $F^{-1}\geq 0$ and $F^{-1}G\geq 0$ ) of a nonsingular matrix $A$. Then $A^{-1}\geq 0$ if and only if $\rho(F^{-1}G)<1$.
\end{lemma}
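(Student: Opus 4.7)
The plan is to prove the two implications separately, exploiting Perron--Frobenius on the nonnegative operator $F^{-1}G$ in one direction and a Neumann series in the other. For the easier direction, suppose $\rho(F^{-1}G) < 1$. Rewriting the splitting as $A = F(I - F^{-1}G)$ gives $A^{-1} = (I - F^{-1}G)^{-1} F^{-1}$, and the smallness of the spectral radius makes the Neumann expansion
\[
(I - F^{-1}G)^{-1} = \sum_{k=0}^{\infty} (F^{-1}G)^{k}
\]
convergent. Every term is nonnegative because $F^{-1}G \geq 0$, so the sum is nonnegative; multiplying on the right by $F^{-1} \geq 0$ yields $A^{-1} \geq 0$.

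For the converse, assume $A^{-1} \geq 0$ and set $\lambda = \rho(F^{-1}G)$. Because $F^{-1}G \geq 0$, the Perron--Frobenius theorem supplies a nonzero nonnegative \emph{left} eigenvector $v^{\top} \geq 0$ with $v^{\top} F^{-1}G = \lambda v^{\top}$. Using $A = F - G$, a short computation gives
\[
v^{\top} F^{-1} A = v^{\top} - v^{\top} F^{-1} G = (1-\lambda)\, v^{\top},
\]
equivalently $v^{\top} F^{-1} = (1-\lambda)\, v^{\top} A^{-1}$. The left-hand side is nonnegative since $v \geq 0$ and $F^{-1} \geq 0$, while the right-hand side is nonpositive whenever $\lambda \geq 1$, since $v^{\top} A^{-1} \geq 0$ under our standing hypothesis. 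Both conditions together force $v^{\top} F^{-1} = 0$; invertibility of $F$ then gives $v^{\top} = (v^{\top} F^{-1}) F = 0$, contradicting $v \neq 0$. Hence $\lambda < 1$.

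The main obstacle is the forward direction: the natural first attempt works with the right Perron eigenvector and the identity $A^{-1}F = (I - F^{-1}G)^{-1}$, but this quickly circles back to assuming what we want to prove. The clean move is the left-eigenvector argument above, which leverages the nonnegativity of $F^{-1}$ directly rather than the nonnegativity of $G$ (the latter would only be guaranteed in the stricter regular setting of Lemma \ref{lem-reg}, not in the weak regular setting). This is precisely why the weak regular hypothesis is \emph{exactly} strong enough to preserve the equivalence.
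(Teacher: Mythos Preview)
The paper does not supply its own proof of this lemma; it is quoted verbatim from the cited references \cite{perea1,perea}, so there is no in-paper argument to compare against. Your proof is correct and self-contained. The Neumann-series direction is standard, and for the converse your use of a nonnegative \emph{left} Perron eigenvector (equivalently, applying Lemma~\ref{preron} to $(F^{-1}G)^{\top}$) is exactly the right device: it lets you exploit $F^{-1}\geq 0$ from the weak-regular hypothesis to force $v^{\top}F^{-1}=0$ and derive the contradiction, whereas a right-eigenvector argument would require $G\geq 0$, which is only available in the regular setting of Lemma~\ref{lem-reg}. Your final paragraph identifies this distinction correctly.
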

\begin{lemma}\cite[Lemma 2.1]{song}\label{lem-non}
    Let $A$ be a nonsingular matrix and $A=F-G$ be nonnegative splitting (nonnegative: if $F^{-1}G\geq 0$ ). Then $A^{-1}F\geq 0$ if and only if $\rho(F^{-1}G)<1$.
\end{lemma}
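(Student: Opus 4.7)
The plan is to reduce the claim to the classical nonnegative Neumann series characterization. First I would observe that, since $F$ must be invertible for $F^{-1}G$ to be defined, one can left-multiply the splitting $A=F-G$ by $F^{-1}$ to obtain $F^{-1}A=I-F^{-1}G$, which is invertible (as $A$ is). Taking inverses yields the key identity
\begin{equation*}
A^{-1}F=(I-F^{-1}G)^{-1}=(I-T)^{-1},\qquad T:=F^{-1}G\ge 0.
\end{equation*}
So the entire statement becomes: for a nonnegative matrix $T$ with $I-T$ nonsingular, $(I-T)^{-1}\ge 0$ if and only if $\rho(T)<1$.

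For the forward direction $\rho(T)<1\Rightarrow (I-T)^{-1}\ge 0$, I would invoke the standard fact that $\rho(T)<1$ implies the Neumann series $\sum_{k=0}^{\infty}T^{k}$ converges to $(I-T)^{-1}$. Since $T\ge 0$ entrywise, every partial sum is nonnegative, hence the limit $A^{-1}F=(I-T)^{-1}$ is nonnegative.

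For the reverse direction I would rely on the Perron--Frobenius theorem. Because $T\ge 0$, its spectral radius $\rho(T)$ is an eigenvalue with an associated nonnegative eigenvector $x\ne 0$, so $Tx=\rho(T)x$ and consequently $(I-T)x=(1-\rho(T))x$. Nonsingularity of $I-T$ rules out $\rho(T)=1$ immediately. If instead $\rho(T)>1$, I would apply $(I-T)^{-1}$ to both sides to get $(I-T)^{-1}x=(1-\rho(T))^{-1}x$; the right-hand side has a strictly negative scalar multiplying a nonzero nonnegative vector, while the left-hand side must be nonnegative by the hypothesis $(I-T)^{-1}\ge 0$, yielding a contradiction. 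Therefore $\rho(T)<1$.

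The main obstacle here is really only the reverse direction, and specifically the need to exclude $\rho(T)\ge 1$ cleanly; Perron--Frobenius supplies the nonnegative eigenvector that turns the abstract spectral inequality into a concrete sign contradiction, and the rest is essentially bookkeeping. No additional hypothesis such as regularity or weak regularity of the splitting is needed, which is exactly what distinguishes this lemma from Lemmas~\ref{lem-reg} and~\ref{lem-weak}.
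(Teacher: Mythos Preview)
Your argument is correct. The identity $A^{-1}F=(I-T)^{-1}$ with $T=F^{-1}G\ge 0$ reduces the claim to the standard Neumann-series/Perron--Frobenius characterization, and both directions are handled cleanly; the sign contradiction via the nonnegative Perron eigenvector is the right way to dispose of $\rho(T)\ge 1$.

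There is nothing to compare against in the paper itself: Lemma~\ref{lem-non} is quoted from \cite[Lemma 2.1]{song} without proof. Your write-up is essentially the classical proof one finds in Song and in standard references on nonnegative splittings, so it aligns with the cited source.
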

\begin{lemma}\cite[Theorem 2.20]{varga}\label{preron}
Let $A\in\mathbb{R}^{m\times m}$ and $A\geq 0$. Then
\begin{itemize}
    \item[(i)] $A$ has a nonnegative real eigenvalue $\lambda$ such that  $\lambda=\rho(A)$.
    \item[(ii)] there exists $x(\geq 0)$ such that $Ax=\rho(A)x$.
\end{itemize}
\end{lemma}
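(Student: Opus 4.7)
The plan is to reduce the general nonnegative case to the strictly positive one by a perturbation argument, and to handle the strictly positive case with a fixed-point theorem. First, in the strictly positive case $A>0$ (entrywise), I would define the standard simplex $\mc{S}=\{x\in\mathbb{R}^m:x\geq 0,\ \sum_i x_i=1\}$ and the continuous self-map $f:\mc{S}\to\mc{S}$ by $f(x)=Ax/\|Ax\|_1$; this is well defined because $Ax>0$ whenever $x\in\mc{S}$. By Brouwer's fixed point theorem $f$ has a fixed point $x^\star\in\mc{S}$, yielding $Ax^\star=\lambda^\star x^\star$ with $\lambda^\star=\|Ax^\star\|_1>0$ and in fact $x^\star>0$. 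A componentwise estimate $|Ay|\leq A|y|$ applied to any eigenvector $y$ of $A$ then shows $|\mu|\leq\lambda^\star$ for every eigenvalue $\mu$, so $\lambda^\star=\rho(A)$.

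Next, for general $A\geq 0$, let $E$ denote the $m\times m$ all-ones matrix and set $A_\epsilon=A+\epsilon E$ for $\epsilon>0$. Each $A_\epsilon$ is strictly positive, so the first step provides $x_\epsilon\in\mc{S}$ with $A_\epsilon x_\epsilon=\lambda_\epsilon x_\epsilon$ and $\lambda_\epsilon=\rho(A_\epsilon)>0$. Since $\mc{S}$ is compact, I can extract a subsequence $x_{\epsilon_k}\to x^\star\in\mc{S}$ as $\epsilon_k\to 0^+$, so $x^\star\geq 0$ and $x^\star\neq 0$. Using continuity of the spectral radius as a function of the matrix entries (a consequence of the continuous dependence of the roots of the characteristic polynomial on its coefficients), $\lambda_{\epsilon_k}\to\rho(A)$. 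Passing to the limit in $A_{\epsilon_k}x_{\epsilon_k}=\lambda_{\epsilon_k}x_{\epsilon_k}$ yields $Ax^\star=\rho(A)x^\star$, which simultaneously establishes (i) and (ii).

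The main technical point to watch is the continuity of the spectral radius under perturbation: individual eigenvalues may merge or split, but $\rho$, being the maximum modulus over the multiset of eigenvalues, is continuous. An equivalent route goes through Gelfand's formula $\rho(B)=\lim_k\|B^k\|^{1/k}$ together with a standard approximation estimate, or through the monotonicity fact $0\leq A\leq B\Rightarrow\rho(A)\leq\rho(B)$ combined with $A\leq A_\epsilon\leq A+\epsilon m E$ to sandwich $\rho(A_\epsilon)$ between $\rho(A)$ and a quantity tending to $\rho(A)$. Aside from this analytic subtlety and the invocation of Brouwer's theorem, the whole argument is bookkeeping with the nonnegativity structure.
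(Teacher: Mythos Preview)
The paper does not prove this lemma at all: it is quoted verbatim as \cite[Theorem 2.20]{varga} and used as an external input, so there is no ``paper's own proof'' to compare against. Your outline is the standard Brouwer--plus--perturbation route to Perron--Frobenius and is essentially correct.

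One step deserves a sentence more of care. In the strictly positive case you write that the componentwise estimate $|Ay|\leq A|y|$ ``then shows $|\mu|\leq\lambda^\star$''. The inequality $A|y|\geq|\mu|\,|y|$ by itself does not immediately give $|\mu|\leq\lambda^\star$; you still need to feed it into something that detects the Perron eigenvalue. The cleanest fix is to run the same Brouwer argument on $A^{T}$ to obtain a strictly positive left eigenvector $z^\star$ with $z^{\star T}A=\lambda^\star z^{\star T}$ (the eigenvalue agrees because $z^{\star T}Ax^\star$ can be evaluated two ways), and then pair: $|\mu|\,z^{\star T}|y|\leq z^{\star T}A|y|=\lambda^\star z^{\star T}|y|$, whence $|\mu|\leq\lambda^\star$ since $z^{\star T}|y|>0$. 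With that line added, the positive case is complete, and your limiting argument for the general nonnegative case is fine; the continuity of $\rho(\cdot)$ you invoke is indeed a consequence of continuous dependence of polynomial roots on coefficients, and the monotonicity sandwich you mention is a legitimate alternative once Gelfand's formula is in hand.
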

Barata and Hussein \cite{barata2012moore} established the following essential result regarding matrices.
\begin{lemma}\cite{barata2012moore}\label{tik-mat}
Consider $A\in\mathbb{C}^{m\times n}$. Then \[\lim_{_{\lambda\to 0}}A^*(AA^*+\lambda\mc{I})^{-1}=\lambda\lim_{\lambda\to 0}(A^*A+\lambda I)A^*=A^{\dagger}.\]
\end{lemma}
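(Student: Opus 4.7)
The plan is to reduce the claim to an elementary diagonal computation by invoking the singular value decomposition of $A$. Write $A=U\Sigma V^{*}$ with $U\in\mathbb{C}^{m\times m}$, $V\in\mathbb{C}^{n\times n}$ unitary and $\Sigma\in\mathbb{C}^{m\times n}$ the diagonal matrix of singular values $\sigma_{1}\ge\cdots\ge\sigma_{r}>0=\sigma_{r+1}=\cdots$, where $r=\mathrm{rank}(A)$. Recall that $A^{\dagger}=V\Sigma^{\dagger}U^{*}$, where $\Sigma^{\dagger}\in\mathbb{C}^{n\times m}$ is the diagonal matrix whose nonzero entries are $1/\sigma_{i}$ for $1\le i\le r$.

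First I would substitute the SVD into both limit expressions. Using unitarity of $U$ and $V$, one obtains
\[
A^{*}(AA^{*}+\lambda I_{m})^{-1}=V\Sigma^{*}(\Sigma\Sigma^{*}+\lambda I_{m})^{-1}U^{*},
\]
and
\[
(A^{*}A+\lambda I_{n})^{-1}A^{*}=V(\Sigma^{*}\Sigma+\lambda I_{n})^{-1}\Sigma^{*}U^{*}.
\]
Since $\Sigma^{*}\Sigma$ and $\Sigma\Sigma^{*}$ are diagonal with entries $\sigma_{i}^{2}$ (padded with zeros), the inner matrices in both expressions are diagonal with $(i,i)$-entry equal to $\sigma_{i}/(\sigma_{i}^{2}+\lambda)$ on the $r\times r$ leading block and $0/(0+\lambda)=0$ otherwise. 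In particular, both expressions coincide for every $\lambda>0$, which already proves the middle equality (up to the apparent misprint of a stray $\lambda$ and a missing inverse in the statement).

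Next I would pass to the limit $\lambda\to 0^{+}$ entrywise on the diagonal: for each $i\le r$, $\sigma_{i}/(\sigma_{i}^{2}+\lambda)\to 1/\sigma_{i}$, while for $i>r$ the entry is identically $0$. Thus the diagonal block converges to $\Sigma^{\dagger}$, and by continuity of matrix multiplication the full expression tends to $V\Sigma^{\dagger}U^{*}=A^{\dagger}$.

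I do not anticipate a genuine obstacle: the SVD reduces everything to a scalar limit, and the non-square shape of $\Sigma$ is handled by keeping careful track of whether the identity is $I_{m}$ or $I_{n}$. The only subtle point is noting that $\sigma_{i}^{2}+\lambda>0$ for every $\lambda>0$, so the inverses $(AA^{*}+\lambda I_{m})^{-1}$ and $(A^{*}A+\lambda I_{n})^{-1}$ are well defined along the path, justifying the limit; this also shows the limit can be taken from either side if one prefers $\lambda\to 0$ along the real line, as stated.
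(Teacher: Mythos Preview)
Your argument via the singular value decomposition is correct and is the standard way to establish this Tikhonov-limit identity; you have also correctly flagged the typographical slip in the displayed formula (the intended second expression is $(A^{*}A+\lambda I)^{-1}A^{*}$, not $\lambda(A^{*}A+\lambda I)A^{*}$). As for comparison: the paper does not supply its own proof of this lemma at all---it is quoted verbatim as a known result from Barata and Hussein with only a citation, so there is no in-paper argument to weigh your approach against.
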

The well-posed or ill-conditioned of a multilinear system defined in \cite{SahooPandaBehera25} as per the following: We call a tensor $\mc{A}\in\mathbb{C}^{m\times n\times p}$ is ill-conditioned if $\tilde{\mc{A}}(:,:,i)$  is ill-conditioned  for some $i$, where $ 1\leq i\leq k$. On the other hand, the multilinear system represented by \(\mc{A}\m\mc{X}=\mc{B}\) is termed well-posed (ill-posed) if $\wtilde{\mc{A}\m\mc{X}}(:,:,i)=\tilde{\mc{B}}(:,:,i)$ is well-posed (ill-posed) for all $i,~i=1,2\ldots, p$.  In  the framework of $M$-product, Sahoo et al. \cite{SahooPandaBehera25} have discussed Tikhonov's regularized solution  $\mc{X}_{\lambda}=(\mc{A}^*\m\mc{A}+\lambda \mc{I})^{-1} \mc{A}^*\m\mc{B}$ for the tensor equations of the form $\mc{A}\m\mc{X}=\mc{B}$. The same idea is extended for the tensor equations of the form $\mc{A}\m\mc{X}\m\mc{B}=\mc{C}$, in Section \ref{sec4}.
\subsection{Block tensor}
For  $\mc{B}\in \mbc^{m\times n \times p}$  and $\mc{C}\in \mbc^{m\times k \times p}$, the row block tensor $\begin{bmatrix}
    \mc{B}& \mc{C}
\end{bmatrix}\in\mbc^{m\times (n+k) \times p}$ of $\mc{B}$ and $\mc{C}$, is  defined element-wise as
\[\begin{bmatrix}
    \mc{B}& \mc{C}
\end{bmatrix}=\mc{A}\in\mbc^{m\times (n+k) \times p}, \mbox{ where }\mc{A}(:,:,i)=\begin{bmatrix}
    \mc{B}(:,:,i)& \mc{C}(:,:,i)
\end{bmatrix},~i=1,2,\ldots,p.\]
Similarly, the column block tensor $\begin{bmatrix}
    \mc{B}_1\\
    \mc{C}_1
\end{bmatrix}\in\mbc^{(m+k)\times n \times p}$ of $\mc{B}_1\in \mbc^{m\times n \times p}$ and $\mc{C}_1\in \mbc^{k\times n \times p}$, is defined as
\[\begin{bmatrix}
    \mc{B}_1\\
     \mc{C}_1
\end{bmatrix}=\begin{bmatrix}
    \mc{B}_1^T& \mc{C}_1^T
\end{bmatrix}^T\]
Utilizing both row  and column block tensors, we define the $2\times 2$ matrix  block tensor of $\mc{B}\in \mbc^{m\times n \times p}$, $\mc{C}\in \mbc^{m\times k \times p}$, $\mc{D}\in \mbc^{s\times n \times p}$,  and $\mc{E}\in \mbc^{s\times k\times p}$ as follows:
\[\begin{bmatrix}
    \mc{B}& \mc{C}\\
     \mc{D}& \mc{E}\\
\end{bmatrix}=\mc{A}\in \mbc^{(m+s)\times (n+k) \times p}, \mbox{ where }\mc{A}(:,:,i)=\begin{bmatrix}
    \mc{B}(:,:,i)& \mc{C}(:,:,i)\\
      \mc{D}(:,:,i)& \mc{E}(:,:,i)\\
\end{bmatrix},~i=1,2,\ldots,p.\]
From the definition of block tensors, we can show the following results.
\begin{proposition}\label{pbls}
For $1\leq i\leq 6$, let $\mc{A}_i,~\mc{B}_i,~\mc{C}_i,~\mc{D}_i$ be tensors having consistent with multiplication. Then:
\begin{enumerate}
\item[(i)] $\mc{A}_1\m\begin{bmatrix}
    \mc{B}_1&\mc{C}_1
\end{bmatrix}=\begin{bmatrix}
    \mc{A}_1\m\mc{B}_1&\mc{A}_1\m\mc{C}_1
\end{bmatrix}$.
\item[(ii)]  $\begin{bmatrix}
    \mc{A}_2\\
    \mc{B}_2
\end{bmatrix}\m\mc{C}_2=\begin{bmatrix}
    \mc{A}_2\m\mc{C}_2\\\mc{B}_2\m\mc{C}_2
\end{bmatrix}$.
\item[(iii)] $\begin{bmatrix}
    \mc{A}_3&\mc{B}_3
\end{bmatrix}\m\begin{bmatrix}
    \mc{C}_3\\
    \mc{D}_3
\end{bmatrix}= \mc{A}_3\m\mc{C}_3+\mc{B}_3\m\mc{D}_3$.
\item[(iv)] $\begin{bmatrix}
    \mc{A}_4\\
    \mc{B}_4
\end{bmatrix}\m\begin{bmatrix}
    \mc{C}_3&\mc{D}_4
\end{bmatrix}= \begin{bmatrix}
    \mc{A}_4\m\mc{C}_4&\mc{A}_4\m\mc{D}_4\\
    \mc{B}_4\m\mc{C}_4&\mc{B}_4\m\mc{D}_4
\end{bmatrix}$.
\item[(v)] $\begin{bmatrix}
    \mc{A}_5 & \mc{B}_5\\
    \mc{C}_5&\mc{D}_5
\end{bmatrix}\m\begin{bmatrix}
    \mc{D}_1\\
    \mc{D}_2
\end{bmatrix}= \begin{bmatrix}
    \mc{A}_5\m\mc{D}_1+\mc{B}_5\m\mc{D}_2\\
    \mc{C}_5\m\mc{D}_1+\mc{D}_5\m\mc{D}_2
\end{bmatrix}$.
\item[(vi)] $\begin{bmatrix}
    \mc{D}_1&
    \mc{D}_2
\end{bmatrix}\m\begin{bmatrix}
    \mc{A}_6 & \mc{B}_6\\
    \mc{C}_6&\mc{D}_6
\end{bmatrix}= \begin{bmatrix}
    \mc{D}_1\m\mc{A}_6+\mc{D}_2\m\mc{C}_6&
   \mc{D}_1\m\mc{B}_6+\mc{D}_2\m\mc{D}_6
\end{bmatrix}$.
\end{enumerate}
\end{proposition}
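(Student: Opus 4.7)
The plan is to reduce every identity to ordinary block matrix multiplication by passing through the transform domain. Recall that $\mc{A}\m\mc{B}=(\tilde{\mc{A}}\Delta\tilde{\mc{B}})\times_3 M^{-1}$, and that the face-product acts slice-wise: $(\tilde{\mc{A}}\Delta\tilde{\mc{B}})(:,:,i)=\tilde{\mc{A}}(:,:,i)\,\tilde{\mc{B}}(:,:,i)$. Block tensors are defined slice-wise as well. The single nontrivial observation I need is the following compatibility lemma: for any block tensor $\mc{G}=[\mc{B},\mc{C}]$, the 3-mode product satisfies $\tilde{\mc{G}}=[\tilde{\mc{B}},\tilde{\mc{C}}]$, and analogously for column blocks and $2\times 2$ blocks. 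This is immediate from the element-wise definition $\tilde{\mc{G}}(:,:,k)=\sum_{i_3}M(k,i_3)\mc{G}(:,:,i_3)$ together with the linearity of block concatenation. The same identity holds when $M$ is replaced by $M^{-1}$.

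With this compatibility in hand, each of the six identities is a one-line verification. For instance, to prove (i), I would write
\[
\mc{A}_1\m[\mc{B}_1,\mc{C}_1]=\bigl(\tilde{\mc{A}}_1\,\Delta\,[\tilde{\mc{B}}_1,\tilde{\mc{C}}_1]\bigr)\times_3 M^{-1},
\]
then note that the $i$-th frontal slice of the argument is
\[
\tilde{\mc{A}}_1(:,:,i)\bigl[\tilde{\mc{B}}_1(:,:,i),\,\tilde{\mc{C}}_1(:,:,i)\bigr]=\bigl[\tilde{\mc{A}}_1(:,:,i)\tilde{\mc{B}}_1(:,:,i),\,\tilde{\mc{A}}_1(:,:,i)\tilde{\mc{C}}_1(:,:,i)\bigr]
\]
by standard block matrix multiplication, and finally pull the block structure back out through $\times_3 M^{-1}$ using the compatibility lemma to identify the result with $[\mc{A}_1\m\mc{B}_1,\,\mc{A}_1\m\mc{C}_1]$. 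Identity (ii) is the transpose of this argument with column blocks; (iii) follows by summing two slice products; (iv) is the outer product pattern; and (v), (vi) combine the two previous cases by treating the $2\times 2$ block as a row of column blocks (or vice versa).

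The main obstacle, if any, is purely bookkeeping: one must be careful that the 3-mode product commutes with every type of block concatenation (row, column, and $2\times 2$). I expect to isolate this as a short preliminary claim and then treat the six items as immediate corollaries, keeping the write-up compact by proving (i) and (ii) in detail and indicating that (iii)--(vi) follow by iterated application of (i) and (ii).
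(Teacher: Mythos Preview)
Your proposal is correct and is exactly the natural way to flesh out the argument; the paper itself omits the proof entirely, stating only that the identities follow ``from the definition of block tensors.'' Your key observation---that the $3$-mode product commutes with block concatenation, so that each identity reduces slice-wise to ordinary block matrix multiplication in the transform domain---is precisely what is needed to make this rigorous.
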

\begin{proposition}\label{pblsp}
    Let $\mc{B}\in \mbc^{m\times n \times p}$, $\mc{D}\in \mbc^{m\times k \times p}$, $\mc{O}\in \mbc^{s\times n \times p}$,  and $\mc{C}\in \mbc^{s\times k\times p}$. If $\mc{A}=\begin{bmatrix}
    \mc{B}& \mc{D}\\
     \mc{O}& \mc{C}\\
\end{bmatrix}$ then $\rho(\mc{A})=\max\{\rho(\mc{B}),\rho(\mc{C})\}$.
\end{proposition}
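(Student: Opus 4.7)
The plan is to reduce the claim to the well-known matrix fact that the spectrum of a block upper triangular matrix is the union of the spectra of its diagonal blocks, and then invoke the slice-wise definition of $\rho$ under the $M$-product. Concretely, by definition $\rho(\mc{A}) = \max_{1\leq i \leq p}\rho(\tilde{\mc{A}}(:,:,i))$, so the whole argument will take place inside the transformed domain.

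First I would show that the hat transformation $\times_3 M$ preserves the block upper triangular pattern on every frontal slice. Since $\tilde{\mc{A}}(:,:,i) = \sum_{j=1}^{p} M(i,j)\,\mc{A}(:,:,j)$ and each $\mc{A}(:,:,j)$ already has the form $\begin{bmatrix} \mc{B}(:,:,j) & \mc{D}(:,:,j) \\ 0 & \mc{C}(:,:,j) \end{bmatrix}$, the linear combination is again block upper triangular with diagonal blocks $\tilde{\mc{B}}(:,:,i)$ and $\tilde{\mc{C}}(:,:,i)$ and upper-right block $\tilde{\mc{D}}(:,:,i)$. This is the only slightly nonobvious step: one must notice that although $\times_3 M$ mixes frontal slices, it acts as a linear combination and therefore respects any common block partition.

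Next I would apply the classical matrix result: for a block upper triangular matrix, the characteristic polynomial factors as the product of the characteristic polynomials of the diagonal blocks, so its set of eigenvalues is the union of the eigenvalues of the diagonal blocks. Applied slice by slice this gives
\begin{equation*}
\rho\bigl(\tilde{\mc{A}}(:,:,i)\bigr) = \max\bigl\{\rho\bigl(\tilde{\mc{B}}(:,:,i)\bigr),\,\rho\bigl(\tilde{\mc{C}}(:,:,i)\bigr)\bigr\}
\end{equation*}
for each $i = 1,\ldots,p$.

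Finally I would take the maximum over $i$ and interchange the two maxima:
\begin{equation*}
\rho(\mc{A}) = \max_{1\le i\le p}\max\bigl\{\rho(\tilde{\mc{B}}(:,:,i)),\rho(\tilde{\mc{C}}(:,:,i))\bigr\} = \max\Bigl\{\max_{i}\rho(\tilde{\mc{B}}(:,:,i)),\,\max_{i}\rho(\tilde{\mc{C}}(:,:,i))\Bigr\} = \max\{\rho(\mc{B}),\rho(\mc{C})\}.
\end{equation*}
There is no real obstacle here; the only care needed is in the very first step to justify that the $3$-mode product commutes with the row/column partition. The rest is just the matrix fact plus rearranging a double maximum.
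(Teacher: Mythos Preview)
Your proposal is correct and is exactly the natural slice-wise argument the paper has in mind: the paper does not give an explicit proof of this proposition at all, merely prefacing it with ``From the definition of block tensors, we can show the following results.'' Your write-up simply makes explicit the two ingredients (that $\times_3 M$ preserves the block upper-triangular pattern on each frontal slice, and the classical matrix fact for block triangular matrices) that the paper leaves implicit.
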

\subsection{Basic Properties of tensors under different products}
Using the definition of face-product, we can show the following.
\begin{proposition}\label{prop-face}
    Let $\mc{A}\in \mbc^{m\times n\times p}$ and $\mc{B}\in \mbc^{n\times k\times p}$. Then 
    \begin{enumerate}
        \item[(i)] $(\mc{A}\Delta\mc{B})^*_{\Delta}=\mc{B}^*_{\Delta}\Delta \mc{A}^*_{\Delta}$.
        \item[(ii)] If $\mc{X}=\mc{A}_{\Delta}^{\dagger}$ then $\mc{X}(:,:,i)=\left[\mc{A}(:,:,i)\right]^{\dagger}$ for $i=1,2,\ldots,p$.
    \end{enumerate}
\end{proposition}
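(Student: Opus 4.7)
The plan is to reduce both assertions to slice-wise statements, exploiting the fact that the face-product, the conjugate transpose under the face-product, and the Moore-Penrose conditions formulated with $\Delta$ are all defined frontal slice by frontal slice. Thus essentially no tensor-specific machinery is required beyond unpacking definitions.

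For part (i), I would compute the $i$-th frontal slice of each side and verify agreement for $1\leq i\leq p$. By the definition of $(\cdot)^*_{\Delta}$, the $i$-th slice of $(\mc{A}\Delta\mc{B})^*_{\Delta}$ equals $[(\mc{A}\Delta\mc{B})(:,:,i)]^*$. Combining the slice-wise definition of the face-product with the elementary matrix identity $(XY)^*=Y^*X^*$, this becomes $[\mc{B}(:,:,i)]^*[\mc{A}(:,:,i)]^*$. On the other hand, $(\mc{B}^*_{\Delta}\Delta\mc{A}^*_{\Delta})(:,:,i)=\mc{B}^*_{\Delta}(:,:,i)\,\mc{A}^*_{\Delta}(:,:,i)=[\mc{B}(:,:,i)]^*[\mc{A}(:,:,i)]^*$ by the same definitions. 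Since the two tensors agree on every frontal slice, they coincide.

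For part (ii), the strategy is to plug the four defining equations for $\mc{X}=\mc{A}_{\Delta}^{\dagger}$ into the $i$-th frontal slice. Using the slice-wise definition of $\Delta$, the equation $\mc{A}\Delta\mc{X}\Delta\mc{A}=\mc{A}$ collapses to $\mc{A}(:,:,i)\,\mc{X}(:,:,i)\,\mc{A}(:,:,i)=\mc{A}(:,:,i)$ for each $i$, and similarly $\mc{X}\Delta\mc{A}\Delta\mc{X}=\mc{X}$ reduces to the corresponding slice-wise identity. The two Hermitian conditions $(\mc{A}\Delta\mc{X})^*_{\Delta}=\mc{A}\Delta\mc{X}$ and $(\mc{X}\Delta\mc{A})^*_{\Delta}=\mc{X}\Delta\mc{A}$ translate, via part (i) together with the slice-wise definition of $(\cdot)^*_{\Delta}$, into $[\mc{A}(:,:,i)\mc{X}(:,:,i)]^*=\mc{A}(:,:,i)\mc{X}(:,:,i)$ and the analogous equation for $\mc{X}(:,:,i)\mc{A}(:,:,i)$. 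Thus $\mc{X}(:,:,i)$ satisfies the four standard matrix Moore-Penrose conditions with respect to $\mc{A}(:,:,i)$, and by uniqueness of the matrix Moore-Penrose inverse we conclude $\mc{X}(:,:,i)=[\mc{A}(:,:,i)]^{\dagger}$.

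Neither item presents a serious obstacle; the only real care required is the bookkeeping that rewrites each tensor-level face-product and conjugate transpose as a plain matrix operation on each slice, and then invoking uniqueness of the matrix Moore-Penrose inverse to close part (ii). In that sense the proof is essentially a direct unwrapping of the definitions in the preliminaries.
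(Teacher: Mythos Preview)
Your proposal is correct and matches the paper's approach: the paper does not give a written proof for this proposition, stating only that it follows ``using the definition of face-product,'' and your slice-wise unwrapping of the definitions (together with uniqueness of the matrix Moore--Penrose inverse for part (ii)) is precisely the intended argument.
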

\begin{proposition}\cite[Proposition 1.3]{mqdr}\label{prop-basic}
Let ${\mc{C}} \in \mbc^{m \times n \times p}$, ${\mc{D}} \in \mbc^{n \times q \times p}$ and $M\in\mbc^{p \times p}$. Then 
    \begin{enumerate}
        \item[(i)] $\tilde{\mc{C}}\Delta\tilde{\mc{D}}=(\mc{C}\m\mc{D})\times_3M$.
        \item[(ii)] $(\tilde{\mc{C}})^*=\wtilde{\mc{C}^*}=\mc{C}^*\times_3M$.
        \item[(iii)] $\wtilde{\mc{C}+\mc{D}}=\tilde{\mc{C}}+\tilde{\mc{D}}$.
        \item[(iv)] $\left(\wtilde{\mc{C}\m\mc{D}}\right)^{-1}=(\tilde{\mc{D}})^{-1}\Delta(\tilde{\mc{C}})^{-1}$.
        \item[(v)] $\left({\mc{C}\m\mc{D}}\right)^{-1}=\mc{D}^{-1}\m 
\mc{C}^{-1}$.
     \end{enumerate}
\end{proposition}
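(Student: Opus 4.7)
The plan is to prove the five parts sequentially by unwinding the definitions of the $3$-mode product, the face-product, and the $M$-product, and then using the identity $\mc{A} = (\mc{A}\times_3 M)\times_3 M^{-1}$ from equation \eqref{MP}. Parts (i)--(iii) follow from direct substitution, while (iv) and (v) will be deduced from (i) combined with frontal-slice matrix algebra.

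For (i), begin from Definition \ref{DeftProd} written in the compact form $\mc{C}\m\mc{D} = (\tilde{\mc{C}}\Delta\tilde{\mc{D}})\times_3 M^{-1}$. Applying $\times_3 M$ to both sides, and using the cancellation $(\mc{A}\times_3 M^{-1})\times_3 M = \mc{A}$ (which itself follows from $MM^{-1}=I$ and the associativity of the $3$-mode product), gives $(\mc{C}\m\mc{D})\times_3 M = \tilde{\mc{C}}\Delta\tilde{\mc{D}}$, which is exactly the claim. For (ii), the definition of $\mc{C}^*$ given in the Preliminaries reads $\mc{C}^* = \wtilde{\mc{C}^*}\times_3 M^{-1}$ with $\wtilde{\mc{C}^*}(:,:,i) = [\tilde{\mc{C}}(:,:,i)]^*$; rearranging yields $\wtilde{\mc{C}^*} = \mc{C}^*\times_3 M$, and identifying $(\tilde{\mc{C}})^*$ with the frontal-wise conjugate transpose recovers $\wtilde{\mc{C}^*} = (\tilde{\mc{C}})^*$. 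Part (iii) is an immediate consequence of the additivity of the $3$-mode product, which is visible in the summation formula defining $\mc{A}\times_3 M$.

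For (iv), combine (i) with a frontal-slice inversion argument. By (i), $\wtilde{\mc{C}\m\mc{D}} = \tilde{\mc{C}}\Delta\tilde{\mc{D}}$, so the inverse of $\wtilde{\mc{C}\m\mc{D}}$ under the face-product is computed slice-by-slice: for each $i$, $\bigl[\tilde{\mc{C}}(:,:,i)\tilde{\mc{D}}(:,:,i)\bigr]^{-1} = [\tilde{\mc{D}}(:,:,i)]^{-1}[\tilde{\mc{C}}(:,:,i)]^{-1}$, which assembles into $(\tilde{\mc{D}})^{-1}\Delta(\tilde{\mc{C}})^{-1}$. For (v), apply the special case of (iv) with $\mc{C}$ equal to an identity tensor to obtain the compatibility $\wtilde{\mc{A}^{-1}} = (\tilde{\mc{A}})^{-1}$, and then compute $\mc{D}^{-1}\m\mc{C}^{-1} = (\wtilde{\mc{D}^{-1}}\Delta\wtilde{\mc{C}^{-1}})\times_3 M^{-1} = \bigl((\tilde{\mc{D}})^{-1}\Delta(\tilde{\mc{C}})^{-1}\bigr)\times_3 M^{-1}$, which by (iv) equals $(\mc{C}\m\mc{D})^{-1}$.

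The main obstacle is notational and conceptual rather than computational: one must consistently keep track of three different ``inverses'' (the matrix inverse of each frontal slice of $\tilde{\mc{A}}$, the face-product inverse, and the $M$-product inverse) as well as two different ``adjoints'' ($\mc{A}^*_{\Delta}$ on frontal slices and $\mc{A}^*$ under the $M$-product), and carefully verify the compatibility identities $\wtilde{\mc{A}^{-1}} = (\tilde{\mc{A}})^{-1}$ and $\wtilde{\mc{A}^*} = (\tilde{\mc{A}})^*$. Once these identifications are in place, every remaining manipulation reduces to bookkeeping in the third mode together with elementary matrix identities applied slice-by-slice.
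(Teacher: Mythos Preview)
The paper does not give its own proof of this proposition: it is quoted verbatim as \cite[Proposition 1.3]{mqdr} and used without argument. Your proposal is correct and proceeds exactly along the natural lines one would expect---unfolding Definition~\ref{DeftProd}, using the cancellation $(\mc{A}\times_3 M^{-1})\times_3 M=\mc{A}$ from~\eqref{MP}, and then reducing (iv) and (v) to slice-wise matrix identities via (i). There is nothing to compare against in the present paper.
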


\begin{proposition}\label{prop-tildag}
Let ${\mc{A}} \in \mbc^{m \times n \times p}$, ${\mc{B}} \in \mbc^{n \times k \times p}$ and $M\in\mbc^{p \times p}$. Then 
\begin{enumerate}
    \item[(i)] $(\mc{A}\m\mc{B})^*=\mc{B}^*\m\mc{A}^*$.
    \item[(ii)] $(\tilde{\mc{A}})_{\Delta}^{\dagger}=\wtilde{\mc{A}^{\dagger}}$.
    \item[(iii)] $(\tilde{\mc{A}})^{\dagger}=\wtilde{\mc{A}^{\dagger}}$ if and only if $\mc{A}^{\dagger}=\mc{A}_{\Delta}^{\dagger}$.
\end{enumerate}
\end{proposition}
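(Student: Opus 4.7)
The plan is to handle the three parts in order, each exploiting the dictionary between the $M$-product and the face-product furnished by the identity $\mc{A}\m\mc{B}=(\tilde{\mc{A}}\Delta\tilde{\mc{B}})\times_3 M^{-1}$ together with Propositions~\ref{prop-face} and~\ref{prop-basic}.

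For (i), I would transport everything to the face-product world via $\times_3 M$ and use invertibility of $M$ to cancel at the end. Concretely, Proposition~\ref{prop-basic}(ii) rewrites $\wtilde{(\mc{A}\m\mc{B})^*}$ as $(\wtilde{\mc{A}\m\mc{B}})^*_\Delta$, which equals $(\tilde{\mc{A}}\Delta\tilde{\mc{B}})^*_\Delta$ by Proposition~\ref{prop-basic}(i); Proposition~\ref{prop-face}(i) reorders this as $\tilde{\mc{B}}^*_\Delta\Delta\tilde{\mc{A}}^*_\Delta$; and a second application of Proposition~\ref{prop-basic}(i) and (ii) pulls this back to $\wtilde{\mc{B}^*\m\mc{A}^*}$. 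Since the map $\mc{C}\mapsto\tilde{\mc{C}}$ is a bijection, the claim follows.

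For (ii), the plan is to verify the four face-product MP conditions for the pair $(\tilde{\mc{A}},\wtilde{\mc{A}^\dagger})$ and then invoke uniqueness of the face-product MP inverse, using the slice-wise characterization from Proposition~\ref{prop-face}(ii). Starting from each of the four $M$-product MP axioms that $\mc{A}^\dagger$ satisfies, applying $\times_3 M$ converts $\m$ into $\Delta$ by Proposition~\ref{prop-basic}(i) and converts $(\cdot)^*$ into $(\cdot)^*_\Delta$ by Proposition~\ref{prop-basic}(ii). The four axioms therefore transform term-by-term into the corresponding face-product MP axioms with $\tilde{\mc{A}}$ in place of $\mc{A}$ and $\wtilde{\mc{A}^\dagger}$ in place of $\mc{A}^\dagger$, yielding $\wtilde{\mc{A}^\dagger}=(\tilde{\mc{A}})_\Delta^\dagger$.

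For (iii), I would use (ii) to rewrite the identity $(\tilde{\mc{A}})^\dagger=\wtilde{\mc{A}^\dagger}$ as $(\tilde{\mc{A}})^\dagger=(\tilde{\mc{A}})_\Delta^\dagger$, so the task reduces to showing that the $M$-product and face-product MP inverses of $\tilde{\mc{A}}$ coincide precisely when the same holds for $\mc{A}$. I expect this to be the main obstacle, because the symmetry axioms in the two products genuinely differ: $(\cdot)^*$ under the $M$-product is the $M$-transformed slice-wise conjugate transpose, whereas $(\cdot)^*_\Delta$ is slice-wise from the outset. The plan is to write each of the two MP characterizations as a system of $M$-transformed equalities and use Proposition~\ref{prop-basic}(i) and (ii) to convert between them, so that the equivalence is a consequence of the compatibility of $\times_3 M$ with both notions of adjoint, together with invertibility of $M$; this should give both implications in one structural argument rather than verifying axiom-by-axiom.
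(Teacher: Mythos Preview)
Your treatments of (i) and (ii) are correct and coincide with the paper's: both transport everything to the face-product side via $\times_3 M$, manipulate there using Propositions~\ref{prop-face} and~\ref{prop-basic}, and then invoke invertibility of $M$ (for (i)) or uniqueness of the slice-wise MP inverse (for (ii)) to conclude.

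For (iii) there is a genuine gap. Your reduction via (ii) to the equivalence $(\tilde{\mc{A}})^{\dagger}=(\tilde{\mc{A}})^{\dagger}_{\Delta}\Longleftrightarrow \mc{A}^{\dagger}=\mc{A}^{\dagger}_{\Delta}$ is legitimate, but the hoped-for ``structural'' argument through Proposition~\ref{prop-basic}(i),(ii) cannot succeed: the map $\mc{C}\mapsto\tilde{\mc{C}}$ is not an involution, so transporting the condition at $\mc{A}$ through one tilde yields a statement involving $\wtilde{\tilde{\mc{A}}}=\mc{A}\times_3 M^{2}$ rather than the condition at $\tilde{\mc{A}}$, and no cancellation occurs. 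In fact the reduced equivalence is already false. Take $p=2$, $M=\left(\begin{smallmatrix}1&1\\0&1\end{smallmatrix}\right)$, and $\mc{A}\in\mbc^{1\times1\times2}$ with frontal slices $\omega=e^{2\pi i/3}$ and $1$; then $\tilde{\mc{A}}=(-\omega^{2},1)$ and one checks $\mc{A}^{\dagger}=\mc{A}^{-1}=(\omega^{2},1)=\mc{A}^{\dagger}_{\Delta}$, yet $(\tilde{\mc{A}})^{\dagger}=(\tilde{\mc{A}})^{-1}$ has first slice $\tfrac{1}{\omega+2}-1$, while $\wtilde{\mc{A}^{\dagger}}$ has first slice $-\omega$, and these differ since $(1-\omega)(\omega+2)=3\neq1$. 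The paper does not follow your reduction; it instead attempts to verify directly the four $M$-product MP axioms for the pair $(\tilde{\mc{A}},\wtilde{\mc{A}^{\dagger}})$ under the hypothesis $\mc{A}^{\dagger}=\mc{A}^{\dagger}_{\Delta}$. Be aware, however, that the paper's key step $(\tilde{\mc{A}}\m\wtilde{\mc{A}^{\dagger}}\m\tilde{\mc{A}})\times_{3}M^{-1}=(\tilde{\mc{A}}\times_{3}M^{-1})\Delta(\wtilde{\mc{A}^{\dagger}}\times_{3}M^{-1})\Delta(\tilde{\mc{A}}\times_{3}M^{-1})$ invokes an identity $(\mc{X}\m\mc{Y}\m\mc{W})\times_{3}M^{-1}=(\mc{X}\times_{3}M^{-1})\Delta(\mc{Y}\times_{3}M^{-1})\Delta(\mc{W}\times_{3}M^{-1})$ that does not hold in general, and the example above shows that part (iii) as stated is not correct; so neither route closes.
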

\begin{proof}
(i) Applying Propositions \ref{prop-face} and  \ref{prop-basic}, we have  
\[(\mc{B}^*\m\mc{A}^*)\times_3 M=\tilde{\mc{B}^*}\Delta\tilde{\mc{A}^*}=(\tilde{\mc{B}})^*\Delta(\tilde{\mc{A}})^*= \left(\tilde{\mc{A}}\Delta\tilde{\mc{B}}\right)^*=\left(\wtilde{\mc{A}\m\mc{B}}\right)^*=\left(\mc{A}\m\mc{B}\right)^*\times_3 M.\]
Post multiplying by $M^{-1}$ (with respect to $3$-mode product), we obtain $(\mc{A}\m\mc{B})^*=\mc{B}^*\m\mc{A}^*$.\\
(ii) Let $\mc{Z}=\wtilde{\mc{A}^{\dagger}}$. Then 
\[\tilde{\mc{A}}=\wtilde{\mc{A}\m\mc{A}^{\dagger}\m\mc{A}}=\tilde{\mc{A}}\Delta\wtilde{\mc{A}^\dagger}\Delta\tilde{\mc{A}}=\tilde{\mc{A}}\Delta\mc{Z}\Delta\tilde{\mc{A}},\]
\[\mc{Z}\Delta\tilde{\mc{A}}\Delta\mc{Z}=\wtilde{\mc{A}^{\dagger}}\Delta\tilde{\mc{A}}\wtilde{\mc{A}^{\dagger}}=\wtilde{\mc{A}^{\dagger}\m\mc{A}\m\mc{A}^{\dagger}}=\wtilde{\mc{A}^{\dagger}}=\mc{Z},\]
\[\left(\tilde{\mc{A}}\Delta\mc{Z}\right)^*=\left(\tilde{\mc{A}}\Delta\wtilde{\mc{A}^{\dagger}}\right)^*=\left(\wtilde{\mc{A}\m\mc{A}^{\dagger}}\right)^*=\wtilde{(\mc{A}\m\mc{A}^{\dagger})^*}=\wtilde{\mc{A}\m\mc{A}^{\dagger}}=\tilde{\mc{A}}\Delta\wtilde{\mc{A}^{\dagger}}=\tilde{\mc{A}}\Delta\mc{Z},\]
and 
\[\left(\mc{Z}\Delta\tilde{\mc{A}}\right)^*=\left(\wtilde{\mc{A}^{\dagger}}\Delta\tilde{\mc{A}}\right)^*=\left(\wtilde{\mc{A}^{\dagger}\m\mc{A}}\right)^*=\wtilde{(\mc{A}^{\dagger}\m\mc{A})^*}=\wtilde{\mc{A}^{\dagger}\m\mc{A}}=\tilde{\wtilde{\mc{A}^{\dagger}\Delta\mc{A}}}=\mc{Z}\Delta\tilde{\mc{A}}.\]
Thus $(\tilde{\mc{A}})_{\Delta}^{\dagger}=\mc{Z}=\wtilde{\mc{A}^{\dagger}}$.\\
(iii) Let $\mc{A}^{\dagger}=\mc{A}_{\Delta}^{\dagger}$ and $\mc{Z}=\wtilde{\mc{A}^{\dagger}}$. Then 
\begin{eqnarray*}
 \left(\tilde{\mc{A}}\m\mc{Z} \m\tilde{\mc{A}} \right)\times_3 M^{-1}&=&  \left(\tilde{\mc{A}}\m\wtilde{\mc{A}^{\dagger}} \m\tilde{\mc{A}} \right)\times_3 M^{-1}=(\tilde{\mc{A}}\times_3 M^{-1})\Delta(\wtilde{\mc{A}^{\dagger}}\times_3 M^{-1})\Delta(\tilde{\mc{A}}\times_3 M^{-1})\\
&=&\mc{A}\Delta\mc{A}^{\dagger}\Delta\mc{A}=\mc{A}=\tilde{\mc{A}}\times_3 M^{-1}.
\end{eqnarray*}
Post multiplying by $M^{-1}$ (with respect to $3$-mode product), we obtain $\tilde{\mc{A}}\m\mc{Z} \m\tilde{\mc{A}}=\tilde{\mc{A}}$. Similarly, $\mc{Z}\m\tilde{\mc{A}} \m\mc{Z}=\mc{Z}$ is follows from 
\begin{eqnarray*}
    (\mc{Z}\m\tilde{\mc{A}} \m\mc{Z})\times_3 M^{-1}=\mc{A}^{\dagger}\Delta\mc{A}\Delta\mc{A}^{\dagger}=\mc{A}^{\dagger}=\wtilde{\mc{A}^{\dagger}}\times_3 M^{-1}=\mc{Z}\times_3 M^{-1}.
\end{eqnarray*}
Now 
\begin{eqnarray*}
 \left(\tilde{\mc{A}}\m\mc{Z}\right)^*\times_3 M^{-1}&=&\left(\mc{Z}^*\m\wtilde{(\mc{A}^*)}\right)\times_3 M^{-1}=\wtilde{(\mc{A}^{\dagger})^*}\times_3M^{-1}\Delta\wtilde{(\mc{A})^*}\times_3M^{-1} =(\mc{A}^{\dagger})^*\Delta \mc{A}^*\\
 &=&\left(\mc{A}\Delta\mc{A}^{\dagger}\right)^*=\left(\tilde{\mc{A}}\m\wtilde{\mc{A}^{\dagger}}\right)\times_3M^{-1}= \left(\tilde{\mc{A}}\m\mc{Z}\right)\times_3 M^{-1}.
\end{eqnarray*}
Post multiplying by $M^{-1}$ (with respect to $3$-mode product), we obtain $\left(\tilde{\mc{A}}\m\mc{Z}\right)^*=\tilde{\mc{A}}\m\mc{Z}$. Similarly, we can show $\left(\tilde{\mc{Z}}\m\mc{A}\right)^*=\tilde{\mc{Z}}\m\mc{A}$. Hence  $(\tilde{\mc{A}})^{\dagger}=\mc{Z}=\wtilde{\mc{A}^{\dagger}}$.\\ 
The converse part can be verified similarly. 
\end{proof}
\begin{lemma}\cite{SahooPandaBehera25}\label{conlemma}
       Consider $M \in \mbc^{p\times p}$ and  $\mc{A}\in \mbc^{m\times m \times p}$. If $\|\mc{A}\|_M<1$ then $\lim_{k\to \infty}\mc{A}^k=\mc{O}$. In addition  $\rho(\mc{A})<1\iff \lim_{k\to \infty}\mc{A}^k=\mc{O}$, where $\mc{O}\in\mbc^{m\times m \times p}$ is the zero tensor.
\end{lemma}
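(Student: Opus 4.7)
The plan is to reduce every statement to the corresponding classical result for matrices by passing through the slicewise transform $\tilde{\mc A} = \mc A \times_3 M$, which is how both the tubal norm and the spectral radius are defined in the excerpt.

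First I would establish, by induction on $k$, that
\[
\wtilde{\mc A^k}(:,:,i) = \bigl[\tilde{\mc A}(:,:,i)\bigr]^k \qquad \text{for every } i=1,\ldots,p.
\]
The base case $k=1$ is immediate. For the inductive step, Proposition \ref{prop-basic}(i) gives $\wtilde{\mc A^{k+1}} = \wtilde{\mc A\m\mc A^k} = \tilde{\mc A}\,\Delta\,\wtilde{\mc A^k}$, and the face-product acts slicewise, so the induction closes. Then, because $\mc A^k = \wtilde{\mc A^k}\times_3 M^{-1}$ and $M^{-1}$ is a fixed invertible matrix (so multiplication by $M^{-1}$ in the third mode is a fixed invertible linear map on the entries), we get the clean equivalence
\[
\lim_{k\to\infty}\mc A^k = \mc O \iff \lim_{k\to\infty}\bigl[\tilde{\mc A}(:,:,i)\bigr]^k = 0 \text{ for all } i=1,\ldots,p.
\]

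Next I would invoke the classical matrix facts: for any square complex matrix $A$, $A^k\to 0$ iff $\rho(A)<1$, and $\|A\|_2<1$ implies $\rho(A)<1$ (hence $A^k\to 0$). Using the definition $\rho(\mc A)=\max_i\rho(\tilde{\mc A}(:,:,i))$, the condition ``$[\tilde{\mc A}(:,:,i)]^k\to 0$ for every $i$'' is equivalent to $\rho(\tilde{\mc A}(:,:,i))<1$ for every $i$, which is exactly $\rho(\mc A)<1$. Combined with the slicewise reduction above, this proves the iff part $\rho(\mc A)<1\iff \mc A^k\to\mc O$.

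For the first assertion, the hypothesis $\|\mc A\|_M<1$ means $\max_i \|\tilde{\mc A}(:,:,i)\|_2<1$, so the spectral radius bound $\rho(B)\leq \|B\|_2$ applied slicewise yields $\rho(\mc A)<1$, and the already-proved equivalence delivers $\mc A^k\to\mc O$. The main obstacle is really only the bookkeeping in the first step; once the identity $\wtilde{\mc A^k}=\tilde{\mc A}^{\Delta k}$ (slicewise $k$th matrix power) is recorded, the rest is a slicewise application of standard matrix convergence results together with the definitions of $\|\cdot\|_M$ and $\rho(\cdot)$ under the $M$-product.
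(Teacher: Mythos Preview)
Your argument is correct. Note, however, that the paper does not actually supply a proof of this lemma: it is stated as a cited result from \cite{SahooPandaBehera25} and used as a black box. There is therefore no ``paper's own proof'' to compare against. That said, your slicewise reduction---establishing $\wtilde{\mc A^k}(:,:,i)=[\tilde{\mc A}(:,:,i)]^k$ via Proposition~\ref{prop-basic}(i) and then invoking the classical matrix fact $A^k\to 0\iff\rho(A)<1$ together with $\rho(A)\le\|A\|_2$---is exactly the natural route given how $\|\cdot\|_M$ and $\rho(\cdot)$ are defined slicewise in the paper, and it is almost certainly the argument in the cited source as well.
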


\section{Two-step parameterized iterative method} \label{Section3}
Now, we present a novel tensor-based two-step parameterized iterative method for solving multilinear systems. This innovative approach advances the field by providing a systematic framework that leverages the structural properties of tensors while incorporating parameterization to enhance the computational efficiency and accuracy of the solution. Specifically, we develop iterative methods for the tensor equation \eqref{maineq}. 
First, we transform the tensor equation \eqref{maineq} into the following two coupled tensor equations.
\[
\mc{A}\m \mc{Y}=\mc{C} \text{~and~}
\mc{X}\m \mc{B}=\mc{Y} .
\] 
We split the tensors $\mc{A}$ and $\mc{B}$ using nonsingular tensors as follows.
\[\mc{A}=\mc{F}_1-\mc{G}_1 \text{~and~}  \mc{B}=\mc{F}_2-\mc{G}_2,  \]
where $\mc{F}_1$ and $\mc{F}_2$ are nonsingular tensors. Introducing two positive parameters $\alpha$ and $\beta$,  the parameterized tensor splittings of $\mc{A}$ and $\mc{B}$, respectively written as 
\[\mc{A}=\frac{\mc{F}_1}{\alpha}-\bar{\mc{G}}_1 \text{~and~} 
\mc{B}=\frac{\mc{F}_2}{\beta}-\bar{\mc{G}}_2 .    \]
Using the splitting  $\frac{\mc{F}_1}{\alpha}-\bar{\mc{G}}_1$ in solving  $\mc{A}\m \mc{Y}=\mc{C}$, we obtain the following iterative scheme
\begin{align}\label{iteq1}
\nonumber
\mc{Y}_{k+1} & =\alpha \mc{F}_1^{-1} \m \bar{\mc{G}}_1 \m\mc{Y}_k+\alpha \mc{F}_1^{-1}\m \mc{C}=\alpha \mc{F}_1^{-1}\m\left(\frac{\mc{F}_1}{\alpha}-\mc{A}\right)\m \mc{Y}_k+\alpha\mc{F}_1^{-1}\m\mc{C}\\
&=\left(\mc{I}_{mmp}-\alpha\mc{F}_1^{-1}\m\mc{A} \right)\m\mc{Y}_k+\alpha\mc{F}_1^{-1}\m\mc{C}.
\end{align}
Similarly, by using the splitting $\frac{\mc{F}_2}{\beta}-\bar{\mc{G}}_2$ for  $\mc{X}\m \mc{B}=\mc{Y}$, we obtain the following iterative scheme
\begin{equation}\label{iteq2}
\mc{X}_{k+1}=\mc{X}_k\m\left(\mc{I}_{nnp}-\beta\mc{B}\m\mc{F}_2^{-1} \right)+ \beta\mc{Y}\m\mc{F}_2^{-1} .
\end{equation}
In view of \eqref{iteq1} and \eqref{iteq2}, we consider the following two-step parametrized iterative method for solving  tensor equations, of the form $ \mc{A}\m\mc{X}\m\mc{B}=\mc{C}$.
\begin{equation}\label{tpit}
\left\{\begin{array}{ll}\mc{Y}_{k+1}&=\left(\mc{I}_{mmp}-\alpha\mc{F}_1^{-1}\m\mc{A} \right)\m\mc{Y}_k+\alpha\mc{F}_1^{-1}\m\mc{C}\\
\mc{X}_{k+1}&=\mc{X}_k\m\left(\mc{I}_{nnp}-\beta\mc{B}\m\mc{F}_2^{-1} \right)+ \beta\mc{Y}_{k+1}\m\mc{F}_2^{-1} 
\end{array}
\right.    
\end{equation}
Next, we discuss the convergence analysis of the two-step parametrized iterative scheme \eqref{tpit}.
\begin{lemma}\label{lem1}
Consider $M\in\mbc^{p\times p}$, $\mc{A}$ and $\mc{C}$ as defined in \eqref{maineq} and $\left\{\mc {Y}_k\right\}$ as defined in \eqref{iteq1}. Then $\left\{\mc {Y}_k\right\}$ converges to $\mc{A}^{-1}\m\mc{C}$  for any initial approximation $\mc{Y}_0$, 
 if $\rho\left(\mc{I}_{mmp}-\alpha \mc{F}_1^{-1} \m\mc{A}\right)<1$.
\end{lemma}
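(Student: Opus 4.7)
The plan is a standard fixed-point/error-recursion argument, adapted to the $M$-product setting and closed off with Lemma \ref{conlemma}. The only ingredient that requires any care is passing from $\mc{T}^k \to \mc{O}$ to $\mc{T}^k \m \mc{E}_0 \to \mc{O}$, which is a continuity observation.

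\textbf{Step 1 (identify the fixed point).} Since $\mc{A}$ is nonsingular by hypothesis in \eqref{maineq}, set $\mc{Y}^{\ast} := \mc{A}^{-1}\m\mc{C}$. Substituting $\mc{Y}^{\ast}$ into the right-hand side of \eqref{iteq1} gives $\bigl(\mc{I}_{mmp}-\alpha\mc{F}_1^{-1}\m\mc{A}\bigr)\m\mc{Y}^{\ast}+\alpha\mc{F}_1^{-1}\m\mc{C} = \mc{Y}^{\ast} - \alpha\mc{F}_1^{-1}\m\mc{A}\m\mc{Y}^{\ast} + \alpha\mc{F}_1^{-1}\m\mc{C} = \mc{Y}^{\ast}$, since $\mc{A}\m\mc{Y}^{\ast}=\mc{C}$. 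Thus $\mc{Y}^{\ast}$ is a fixed point of the iteration.

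\textbf{Step 2 (error recursion).} Write $\mc{T}:=\mc{I}_{mmp}-\alpha\mc{F}_1^{-1}\m\mc{A}$ and $\mc{E}_k:=\mc{Y}_k-\mc{Y}^{\ast}$. Subtracting the fixed-point identity from \eqref{iteq1} and using the distributivity of the $M$-product (Proposition \ref{pbls}, or directly from Definition \ref{DeftProd}) yields $\mc{E}_{k+1}=\mc{T}\m\mc{E}_k$, which iterates to $\mc{E}_k=\mc{T}^k\m\mc{E}_0$ for every $k\geq 0$.

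\textbf{Step 3 (invoke the spectral radius hypothesis).} By assumption $\rho(\mc{T})<1$, so Lemma \ref{conlemma} gives $\mc{T}^k\to\mc{O}$ as $k\to\infty$. It remains to pass this to $\mc{T}^k\m\mc{E}_0\to\mc{O}$; this is where the (mild) obstacle lies, but it is resolved via the hat transform. For each slice index $i$, Proposition \ref{prop-basic}(i) gives $\wtilde{\mc{T}^k\m\mc{E}_0}(:,:,i)=\bigl[\tilde{\mc{T}}(:,:,i)\bigr]^k\,\tilde{\mc{E}_0}(:,:,i)$, and $\tilde{\mc{T}}(:,:,i)^k\to 0$ since $\rho(\tilde{\mc{T}}(:,:,i))\leq\rho(\mc{T})<1$; applying $\times_3 M^{-1}$ (a fixed linear map on the third mode) preserves convergence, so $\mc{T}^k\m\mc{E}_0\to\mc{O}$.

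\textbf{Step 4 (conclude).} Therefore $\mc{Y}_k-\mc{Y}^{\ast}=\mc{T}^k\m\mc{E}_0\to\mc{O}$, i.e.\ $\mc{Y}_k\to\mc{A}^{-1}\m\mc{C}$ for every initial approximation $\mc{Y}_0$, completing the proof. The argument is essentially classical; the only non-routine point is the translation of the ``$\rho<1\Rightarrow$ powers vanish'' principle through the $M$-product, which Lemma \ref{conlemma} and Proposition \ref{prop-basic} handle cleanly.
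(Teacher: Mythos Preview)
Your argument is correct and coincides with the paper's ``Alternative Proof'': define the error $\Psi_k=\mc{Y}_k-\mc{A}^{-1}\m\mc{C}$, obtain $\Psi_{k+1}=(\mc{I}_{mmp}-\alpha\mc{F}_1^{-1}\m\mc{A})\m\Psi_k$, iterate, and conclude via Lemma~\ref{conlemma}; you even supply the slicewise justification for $\mc{T}^k\m\mc{E}_0\to\mc{O}$ that the paper leaves implicit. The paper also offers a first proof that vectorizes each $\mc{Y}_k$ via a \texttt{tube} map and rewrites the iteration with a Kronecker-structured operator $\mc{T}(:,:,i)=I_n\otimes\mc{T}_1(:,:,i)$ before invoking an external convergence theorem, but this is just a repackaging of the same fixed-point idea.
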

\begin{proof}    
 Let us define $\tub:\mbc^{m\times n\times p}\mapsto\mbc^{mn\times 1\times p}$ by  $\mc{A}\mapsto \tub(\mc{A})$, where 
 \[\tub(\mc{A})(:,1,i)=\begin{bmatrix}
     \mc{A}(1,:,i),&\mc{A}(2,:,i),\cdots,\mc{A}(m,:,i)
 \end{bmatrix}^T\]
 Similarly, we can define $\tub^{-1}$ in such way that $\tub^{-1}(\tub(\mc{A}))=\mc{A}=\tub(\tub^{-1}(\mc{A}))$.
 Denoting $\mc{Z}_{k}=\tub(\mc{Y}_{k})$, $\mc{T}_1=\mc{I}_{mmp}-\alpha \mc{F}_1^{-1} \m\mc{A}$, and $\mc{C}_1=\tub(\alpha \mc{F}_1^{-1}\m\mc{C})$, the proposed iterative scheme \eqref{iteq1} becomes
 \[\mc{Z}_{k+1}=\mc{T}\m\mc{Z}_{k}+\mc{C}_1,\]
 where $\mc{T}(:,:,i)=I_{n}\otimes \mc{T}_1(:,:,i)$ for $i=1,2,\ldots,p$.  Since $\rho\left(\mc{T}(:,:,i)\right)=\rho\left(I_{m}\otimes \mc{T}_1(:,:,i)\right)=\rho\left(\mc{T}_1(:,:,i)\right)$, so we have 
\[\rho(\mc{T})=\max_{1\leq i\leq p}\{\rho (\mc{T}(:,:,i)\}=\max_{1\leq i\leq p}\{\rho (\mc{T}_1(:,:,i))\}=\rho(\mc{T}_1)=\rho\left(\mc{I}_{mmp}-\alpha \mc{F}_1^{-1} \m\mc{A}\right)<1\]
Hence, according to Theorem 6.10  
 \cite{SahooPandaBehera25}, $\mc{Z}_{k}=\tub(\mc{Y}_k)$ converges to $\tub(\mc{A}^{-1}\m\mc{C})$. Applying the inverse operation $\tub^{-1}$, we find that the sequence $\mc{Y}_k$ converges to the exact solution $\mc{A}^{-1}\m\mc{C}$.\\
{\bf Alternative Proof:} \\
 Let $\Psi_{ k+1} =\mc{Y} _{ k+1} - \mc{A}^{-1}\m\mc{C}$. Then 
 \begin{align}\label{eqpsi}
\nonumber
\Psi_{ k+1} &=\mc{Y} _{ k+1} - \mc{A}^{-1}\m\mc{C}=\mc{Y}_k-\mc{A}^{-1}\m\mc{C}+\alpha\mc{F}_{1}^{-1}\m(\mc{C}-\mc{A}\m\mc{Y}_k)\\
\nonumber
&=\Psi_k+\alpha\mc{F}_{1}^{-1}\m(\mc{A}\m\mc{A}^{-1}\m\mc{C}-\mc{A}\m\mc{Y} _{ k})=\Phi_k-\alpha\mc{F}_{1}^{-1}\m\mc{A}\m\Phi_k\\
&=(\mc{I}_{mmp}-\alpha\mc{F}_{1}^{-1}\m\mc{A})\m\Psi_k.
\end{align}
Applying equation \eqref{eqpsi} recursively, we obtain 
\begin{center}
    $\Psi_{ k+1}=(\mc{I}_{mmp}-\alpha\mc{F}_{1}^{-1}\m\mc{A})^{k+1}\m\Psi_0$. 
\end{center}
Since $\rho\left(\mc{I}_{mmp}-\alpha \mc{F}_1^{-1} \m\mc{A}\right)<1$, so by Lemma \ref{conlemma}, we get $\Psi_k\to 0$ as $k\to \infty$ and completes the proof.
\end{proof}
In case of the iterative scheme \eqref{iteq2}, we can similarly obtain the following result.
\begin{lemma}\label{lem2}
Consider $M\in\mbc^{p\times p}$, $\mc{B}$ as defined in \eqref{maineq} and $\left\{\mc {X}_k\right\}$ be defined as in \eqref{iteq2}. Then $\left\{\mc {X}_k\right\}$ converges to  $\mc{Y}\m\mc{B}^{-1}$  
for any initial approximation $\mc{X}_0$, if $\rho\left(\mc{I}_{nnp}-\beta \mc{B}\m \mc{F}_2^{-1}\right)<1$.
\end{lemma}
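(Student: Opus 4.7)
The plan is to mirror the alternative proof strategy used in Lemma \ref{lem1}, with the twist that the iteration \eqref{iteq2} is a right-sided recursion rather than a left-sided one. Specifically, I would introduce the error tensor $\Phi_k = \mc{X}_k - \mc{Y}\m\mc{B}^{-1}$ and derive a one-step recursion for $\Phi_k$ by inserting the iteration formula and factoring out the exact fixed point. The key algebraic identity I would exploit is $\mc{Y}=(\mc{Y}\m\mc{B}^{-1})\m\mc{B}$, which rewrites the inhomogeneous term as
\[
\beta\mc{Y}\m\mc{F}_2^{-1} = \beta(\mc{Y}\m\mc{B}^{-1})\m\mc{B}\m\mc{F}_2^{-1}
= \mc{Y}\m\mc{B}^{-1} - \mc{Y}\m\mc{B}^{-1}\m\bigl(\mc{I}_{nnp}-\beta\mc{B}\m\mc{F}_2^{-1}\bigr).
\]
Subtracting $\mc{Y}\m\mc{B}^{-1}$ on both sides of \eqref{iteq2} and collecting terms should yield the clean recursion
\[
\Phi_{k+1}=\Phi_k\m\bigl(\mc{I}_{nnp}-\beta\mc{B}\m\mc{F}_2^{-1}\bigr).
\]

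Unrolling this recursion gives $\Phi_{k+1}=\Phi_0\m\mc{T}^{k+1}$ with $\mc{T}=\mc{I}_{nnp}-\beta\mc{B}\m\mc{F}_2^{-1}$. The hypothesis $\rho(\mc{T})<1$ together with Lemma \ref{conlemma} delivers $\mc{T}^{k+1}\to\mc{O}$ as $k\to\infty$. Since $\Phi_0$ is fixed and the $M$-product is bilinear and continuous (the tubal norm is submultiplicative under $M$-product composition), $\Phi_0\m\mc{T}^{k+1}\to\mc{O}$, which is exactly the claim $\mc{X}_k\to\mc{Y}\m\mc{B}^{-1}$.

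The main obstacle is purely bookkeeping: carefully establishing the right-sided error recursion requires the right identity for the inhomogeneous term, since one cannot pull $\mc{Y}\m\mc{B}^{-1}$ out of $\beta\mc{Y}\m\mc{F}_2^{-1}$ as directly as in the left-sided case of Lemma \ref{lem1}. Once the recursion is written correctly, convergence follows immediately. As an alternative, one could imitate the first part of Lemma \ref{lem1} by defining a row-wise analogue of $\tub$, say $\mathbf{row}:\mbc^{m\times n\times p}\to\mbc^{1\times mn\times p}$, that vectorizes $\mc{X}_k$ row by row so that right $M$-multiplication becomes left multiplication by $\mc{T}^{\!\top}\kronecker \mc{I}$; this converts \eqref{iteq2} into a standard one-sided linear iteration whose spectral radius equals $\rho(\mc{T})$, and Theorem 6.10 of \cite{SahooPandaBehera25} closes the argument. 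I would present only the error-recursion proof, as it is shorter and avoids introducing new vectorization machinery.
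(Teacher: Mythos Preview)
Your proposal is correct and takes essentially the same approach as the paper: the paper simply remarks that Lemma~\ref{lem2} is obtained ``similarly'' to Lemma~\ref{lem1}, and your error-recursion argument is precisely the right-sided analogue of the alternative proof given there. The derivation $\Phi_{k+1}=\Phi_k\m\mc{T}$ via the identity $\beta\mc{Y}\m\mc{F}_2^{-1}-\mc{Y}\m\mc{B}^{-1}=-(\mc{Y}\m\mc{B}^{-1})\m(\mc{I}_{nnp}-\beta\mc{B}\m\mc{F}_2^{-1})$ is exactly what ``similarly'' entails, and the appeal to Lemma~\ref{conlemma} closes it.
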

Using Lemma \ref{lem1} and Lemma \ref{lem2}, we discuss  the convergence of the proposed two-step parametrized iterative scheme \eqref{tpit}, in the next result.
\begin{theorem}\label{thtpit}
Consider $M\in\mbc^{p\times p}$, $\mc{A}$, $\mc{B}$,  $\mc{C}$ as defined in \eqref{maineq}. Let $\left\{\mc {X}_k\right\}$ and   $\left\{\mc {Y}_k\right\}$  as defined in \eqref{tpit}. Then $\left\{\mc{X}_k\right\}$  converges  to $\mc{A}^{-1}\m\mc{C}\m\mc{B}^{-1}$  if $\rho\left(\mc{I}_{mmp}-\alpha \mc{F}_1^{-1} \m\mc{A}\right)<1$ and  $\rho\left(\mc{I}_{nnp}-\beta \mc{B} \m \mc{F}_2^{-1}\right)<1$, for any initial approximation $\mc{X}_0$ with $\mc{X}_0\m\mc{B}=\mc{Y}_0$.
\end{theorem}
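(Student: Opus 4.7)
The plan is to reduce the coupled scheme to the two single-update results already established. First, applying Lemma \ref{lem1} under the hypothesis $\rho(\mc{I}_{mmp}-\alpha\mc{F}_1^{-1}\m\mc{A})<1$ yields $\mc{Y}_k \to \mc{Y}^{\ast} := \mc{A}^{-1}\m\mc{C}$. Set the target $\mc{X}^{\ast} := \mc{A}^{-1}\m\mc{C}\m\mc{B}^{-1}$, and observe that $\mc{X}^{\ast}\m\mc{B} = \mc{Y}^{\ast}$, so $\mc{X}^{\ast}$ is a fixed point of the map $\mc{X}\mapsto \mc{X}\m(\mc{I}_{nnp}-\beta\mc{B}\m\mc{F}_2^{-1})+\beta\mc{Y}^{\ast}\m\mc{F}_2^{-1}$ (the $\mc{B}\m\mc{F}_2^{-1}$ and $\mc{F}_2^{-1}$ terms cancel).

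Next, introduce the error tensors $\Psi_k := \mc{Y}_k-\mc{Y}^{\ast}$, $\Phi_k := \mc{X}_k-\mc{X}^{\ast}$, and abbreviate $\mc{T}_1 := \mc{I}_{mmp}-\alpha\mc{F}_1^{-1}\m\mc{A}$, $\mc{T}_2 := \mc{I}_{nnp}-\beta\mc{B}\m\mc{F}_2^{-1}$. Subtracting the fixed-point relations from \eqref{tpit} produces the coupled error recursions $\Psi_{k+1}=\mc{T}_1\m\Psi_k$ and $\Phi_{k+1}=\Phi_k\m\mc{T}_2+\beta\Psi_{k+1}\m\mc{F}_2^{-1}$. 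The first gives $\Psi_{k+1}=\mc{T}_1^{k+1}\m\Psi_0$, and unrolling the second yields
\begin{equation*}
\Phi_{k+1}=\Phi_0\m\mc{T}_2^{k+1}+\beta\sum_{j=0}^{k}\mc{T}_1^{j+1}\m\Psi_0\m\mc{F}_2^{-1}\m\mc{T}_2^{k-j}.
\end{equation*}
Note that the initial condition $\mc{X}_0\m\mc{B}=\mc{Y}_0$ controls $\Phi_0$ via $\Phi_0=\Psi_0\m\mc{B}^{-1}$, which is harmless since the following estimates only need $\|\Phi_0\|_M<\infty$.

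To show $\Phi_{k+1}\to \mc{O}$ I would pass to the transformed slices via $\times_3 M$ using Proposition \ref{prop-basic}: for each $i$, the matrices $\tilde{\mc{T}}_1(:,:,i)$ and $\tilde{\mc{T}}_2(:,:,i)$ have spectral radii strictly less than one, by the definition of $\rho(\cdot)$ under the $M$-product together with the hypotheses. The classical matrix fact that $\rho(T)<1$ implies $\|T^k\|_2 \le C q^k$ for some $q<1$ provides uniform geometric decay with slice-dependent constants $C_1^{(i)},q_1^{(i)}$ and $C_2^{(i)},q_2^{(i)}$. The first term $\Phi_0\m\mc{T}_2^{k+1}$ vanishes by Lemma \ref{conlemma}, while the slice-wise norm of the convolution sum is dominated by a constant times $\sum_{j=0}^{k}q_1^{(i)(j+1)}q_2^{(i)(k-j)}$, which equals $q_1^{(i)}(q_1^{(i)k+1}-q_2^{(i)k+1})/(q_1^{(i)}-q_2^{(i)})$ when $q_1^{(i)}\neq q_2^{(i)}$ and $(k+1)q_1^{(i)k+1}$ otherwise; both expressions tend to zero. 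Taking the maximum over $i$ gives $\|\Phi_{k+1}\|_M\to 0$, hence $\mc{X}_k\to\mc{A}^{-1}\m\mc{C}\m\mc{B}^{-1}$.

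The main obstacle is handling the convolution term: one cannot bound it by a simple product of operator norms because $\rho(\mc{T}_i)<1$ need not give $\|\mc{T}_i\|_M<1$ in the tubal norm, and the two contractions $\mc{T}_1$ (acting on the left of $\Psi_0$) and $\mc{T}_2$ (acting on the right) are not compatible. The slice-wise reduction together with the classical $\|T^k\|\le Cq^k$ bound is the cleanest way to decouple these two actions and force the convolution to vanish, so this is the step I expect to require the most care.
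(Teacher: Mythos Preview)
Your argument is correct. Unrolling the coupled error recursion into the convolution sum and then passing to the transformed slices to exploit the classical bound $\|T^k\|_2\le Cq^k$ whenever $\rho(T)<1$ handles the non-commuting left/right actions of $\mc{T}_1$ and $\mc{T}_2$ cleanly; the resulting geometric convolution $\sum_{j=0}^{k}q_1^{j+1}q_2^{k-j}$ indeed tends to zero in both the $q_1\neq q_2$ and $q_1=q_2$ cases.

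The paper's primary proof takes a different route: it vectorizes via the $\tub$ operator, stacks $(\phi_k,\psi_k)$ into a single iteration driven by a block upper-triangular tensor $\mc{T}$, applies Proposition~\ref{pblsp} to get $\rho(\mc{T})=\max\{\rho(\mc{B}),\rho(\mc{C})\}<1$, and finishes with Lemma~\ref{conlemma}. That approach trades the explicit convolution estimate for the block-tensor/Kronecker machinery. The paper also presents an alternative proof based on the same convolution expansion you derive, but it bounds the sum by invoking $\|\mc{S}\|_M\le\rho(\mc{S})$, which is generally false (the inequality goes the other way for operator norms). You explicitly flag this obstruction and repair it with the slice-wise $\|T^k\|\le Cq^k$ estimate, so your version is actually more careful than the paper's alternative argument while remaining more elementary than its block-tensor proof.
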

\begin{proof}
   Let $\Phi_k= \mc{X}_k-\mc{A}^{-1}\m\mc{C}\m\mc{B}^{-1}$ and $\Psi_k=\mc{Y}_k-\mc{A}^{-1}\m\mc{C}$. From \eqref{tpit}, we obtain 
   \begin{align}\label{eqphi}
   \nonumber
   \Phi_{ k+1}&=\mc{X}_{k+1}-\mc{A}^{-1}\m\mc{C}\m\mc{B}^{-1}= \mc{X}_k- \mc{A}^{-1}\m\mc{C}\m\mc{B}^{-1}+\beta(\mc{Y}_{k+1}-\mc{X}_k \m\mc{B})\m\mc{F}_{2}^{-1}\\
   \nonumber
   &= \Phi_{ k}-\beta(\mc{X}_k -\mc{A}^{-1}\m\mc{C}\m\mc{B}^{-1})\m\mc{B}\m\mc{F}_{2}^{-1}+\beta(\mc{Y}_{k+1}-\mc{A}^{-1}\m\mc{C})\m\mc{F}_{2}^{-1}\\
   &=\Phi_{ k}\m(\mc{I}_{nnp}-\beta\mc{B}\m\mc{F}_{2}^{-1})+\beta \Psi_{k+1}\m\mc{F}_{2}^{-1}.
 \end{align}
Applying \eqref{eqpsi} to the equation \eqref{eqphi}, and combining both we obtain the following system of tensor equations:
\begin{equation}\label{eqfisi}
\left\{\begin{array}{ll} 
\Phi_{ k+1}&=\Phi_{ k}\m(\mc{I}_{nnp}-\beta\mc{B}\m\mc{F}_{2}^{-1})+\beta (\mc{I}_{mmp}-\alpha\mc{F}_{1}^{-1}\m\mc{A})\m\Psi_k\m\mc{F}_{2}^{-1}\\
\Psi_{ k+1} &=(\mc{I}_{mmp}-\alpha\mc{F}_{1}^{-1}\m\mc{A})\m\Psi_k
\end{array}
\right.    
\end{equation}
Let $\mc{T}_1=\left(\mc{I}_{nnp}-\beta\mc{B}\m\mc{F}_{2}^{-1}\right)^T$, $\mc{T}_2=\mc{I}_{mmp}-\alpha\mc{F}^{-1}\m\mc{A}$ and $\mc{T}_3=\beta \left(\mc{F}_2^{-1}\right)^T$. Define $\mc{B}$, $\mc{C}$, and $\mc{D}$ with respective entries are given by 
\[\mc{B}(:,:,i)=\mc{T}_1(:,:,i)\otimes I_{m},~\mc{C}(:,:,i)=I_{n}\otimes\mc{T}_2(:,:,i),~\mc{D}(:,:,i)= \mc{T}_3(:,:,i)\otimes\mc{T}_2(:,:,i),~i=1,2,\ldots, p.\]
One can notice that 
\[\rho(\mc{B})=\max_{1\leq i\leq p}\{\rho (\mc{B}(:,:,i)\}=\max_{1\leq i\leq p}\{\rho (\mc{T}_1(:,:,i))\}=\rho(\mc{T}_1)=\rho\left(\mc{I}_{mmp}-\alpha \mc{F}_1^{-1} \m\mc{A}\right)<1,\]
and
\[\rho(\mc{C})=\max_{1\leq i\leq p}\{\rho (\mc{T}(:,:,i)\}=\max_{1\leq i\leq p}\{\rho (\mc{T}_2(:,:,i))\}=\rho(\mc{T}_2)=\rho\left(\mc{I}_{nnp}-\beta\mc{B}\m\mc{F}_{2}^{-1}\right)<1.\]
Let $\phi_{k+1}=\tub(\Phi_{k+1})$ and $\psi_{k+1}=\tub(\Psi_{k+1})$  Then equation \eqref{eqfisi} reduces to the following block tensor form
\[
  \begin{bmatrix}
    \phi_{k+1}\\
    \psi_{k+1}
\end{bmatrix}=\begin{bmatrix}
    \mc{B}& \mc{D}\\
    \mc{O}&\mc{C}
\end{bmatrix}\m\begin{bmatrix}
    \phi_{k}\\
    \psi_{k}
\end{bmatrix}=\mc{T}\m\begin{bmatrix}
    \phi_{k}\\
    \psi_{k}
\end{bmatrix}=\cdots=\mc{T}^{k+1}\m\begin{bmatrix}
    \phi_{0}\\
    \psi_{0}
\end{bmatrix}, 
\]
where $\mc{T}=\begin{bmatrix}
    \mc{B}& \mc{D}\\
    \mc{O}&\mc{C}
\end{bmatrix}$. Applying Proposition \ref{pblsp}, we have  $\rho(\mc{T})<1$ and consequently by Lemma \ref{conlemma}, we obtain $\tub(\Phi_{k+1})=\phi_{k+1}\to \mc{O}$ as $k\to\infty$. From inverse operation $\tub^{-1}$, it follows that $\Phi_{k+1}\to \mc{O}$ as $k\to\infty$ and completes the proof.
\\
{\bf Alternative Proof:} \\
Equation \eqref{eqphi} can be simplified as 
\begin{eqnarray*}
 \Phi_{ k+1}&=&\Phi_{ 0}\m\left(\mc{I}_{nnp}-\beta\mc{B}\m\mc{F}_{2}^{-1}\right)^{k+1}\\
 &&\hspace{1cm}+\beta\sum_{i=0}^{k+1}\left(\mc{I}_{mmp}-\alpha\mc{F}^{-1}\m\mc{A}\right)^{k+1-i}\Phi_0\m\mc{F}_2^{-1}\m\left(\mc{I}_{nnp}-\beta\mc{B}\m\mc{F}_{2}^{-1}\right)^{i}.   
\end{eqnarray*}
Taking $\mc{S}=\mc{I}_{nnp}-\beta\mc{B}\m\mc{F}_{2}^{-1}$ and $\mc{T}=\mc{I}_{mmp}-\alpha\mc{F}^{-1}\m\mc{A}$ and applying tubal norm, we obtain
\[\|\Phi_{ k+1}\|_M\leq \|\Phi_{ 0}\|_M\|\mc{S}\|_M^{k+1}+\beta \|\Psi_{ 0}\|_M\|\mc{F}_2^{-1}\|_M\sum_{i=0}^{k+1}\|\mc{S}\|_M^{k+1-i}\|\mc{T}\|_M^{i}.\]
Since $\|\mc{S}\|_M\leq \rho(\mc{S})<1$ and $\|\mc{T}\|_M\leq \rho(\mc{T})<1$, so  $\displaystyle \sum_{k=0}^{\infty}\|\mc{S}\|_M^{k}$ and $\displaystyle\sum_{k=0}^{\infty}\|\mc{T}\|_M^{k}$ both are convergent. Thus Cauchy product $\left(\displaystyle \sum_{k=0}^{\infty}\|\mc{S}\|_M^{k}\right)\left(\displaystyle\sum_{k=0}^{\infty}\|\mc{T}\|_M^{k} \right)=\displaystyle\sum_{k=0}^{\infty}c_k$ ( where $c_k=\displaystyle\sum_{i=0}^{k}\|\mc{S}\|_M^{k-i}\|\mc{T}\|_M^{i}$ ) is convergent. Hence $\sum_{i=0}^{k}\|\mc{S}\|_M^{k-i}\|\mc{T}\|_M^{i}=c_k\to 0$ as $k\to\infty$. By applying Lemma \ref{conlemma}, we conclude that  $\|\Phi_{ k+1}\|_M\to 0$ as $k\to\infty$.
\end{proof}
In view of Theorem \ref{thtpit}, the following algorithm is developed to compute the solution of \eqref{maineq}.
\begin{algorithm}[H]
 \caption{Two-step parameterized iterative method for solving $\mc{A}\m\mc{X}\m\mc{B}=\mc{C}$} \label{alg:PAIMethod}
\begin{algorithmic}[1]
\State {\bf Input} $\mc{A}\in \mathbb{C}^{m \times m\times p}$, $\mc{B} \in \mathbb{C}^{n \times n\times p}$,  $\mc{C} \in \mathbb{C}^{m \times n\times p}$,   $M\in\mbc^{p\times p}$.
\State {\bf Initial guess }$\mc{X}_0 \in \mathbb{C}^{m \times n\times p}$ and $\mc{Y}_0=\mc{X}_0\m\mc{B}$
\State {\bf Compute } Splittings of $\mc{A}=\mc{F}_{1}-\mc{G}_{1}$ and $\mc{B}=\mc{F}_{2}-\mc{G}_{2}$
\If{$\rho\left(\mc{I}_{mmp}-\alpha \mc{F}_1^{-1} \m\mc{A}\right)<1$ and  $\rho\left(\mc{I}_{nnp}-\beta \mc{B} \m \mc{F}_2^{-1}\right)<1$}
\While{({true})}
\State $\mc{Y}_{k+1}=\left(\mc{I}_{mmp}-\alpha\mc{F}_{1}^{-1} \m \mc{A} \right)\m\mc{Y}_k+\alpha\mc{F}_{1}^{-1}\m\mc{C}$
\State $ \mc{X}_{k+1}=\mc{X}_k\m\left(\mc{I}_{nnp}-\beta\mc{B}\m\mc{F}_{2}^{-1} \right)+ \beta\mc{Y}_{k+1}\m\mc{F}_{2}^{-1}$. 
   \If{tol $\leq \epsilon$}
   \State\textbf{break} 
  \EndIf
  \State $\mc{Y}_0\gets \mc{Y}_k$ and $\mc{X}_0\gets \mc{X}_k$
   \EndWhile 
   \Else~~ "The scheme does not converge"
   \EndIf
\State \Return $\mc{X}_k$ 
 \end{algorithmic}
\end{algorithm}
\begin{theorem}\label{thm-cgt}
Consider $M\in\mbc^{p\times p}$, $\mc{A}$, $\mc{B}$,  $\mc{C}$ as defined in \eqref{maineq}. Let $\left\{\mc {X}_k\right\}$ and   $\left\{\mc {Y}_k\right\}$  be defined as in \eqref{tpit}.  Let $0<\alpha<\frac{2}{1+\rho(\mc{F}_{1}^{-1}\m\mc{G}_1)}$ and $0<\beta<\frac{2}{1+\rho(\mc{F}_{2}^{-1}\m\mc{G}_2)}$.  If $\rho(\mc{F}_{1}^{-1}\m\mc{G}_1)<1$ and $\rho(\mc{F}_{2}^{-1}\m\mc{G}_2)<1$ then   $\left\{\mc{X}_k\right\}$  converges  to the exact solution $\mc{A}^{-1}\m\mc{C}\m\mc{B}^{-1}$ for any initial tensor $\mc{X}_0$ with $\mc{Y}_0=\mc{X}_0\m\mc{B}$.
\end{theorem}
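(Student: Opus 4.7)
The plan is to reduce this theorem to Theorem \ref{thtpit}, so it suffices to verify the two spectral-radius hypotheses $\rho(\mc{I}_{mmp} - \alpha\mc{F}_1^{-1}\m\mc{A}) < 1$ and $\rho(\mc{I}_{nnp} - \beta\mc{B}\m\mc{F}_2^{-1}) < 1$ from the stated parameter ranges together with the bounds $\rho(\mc{F}_i^{-1}\m\mc{G}_i) < 1$. Substituting the splittings $\mc{A}=\mc{F}_1-\mc{G}_1$ and $\mc{B}=\mc{F}_2-\mc{G}_2$ gives the key identities
\[\mc{I}_{mmp} - \alpha\mc{F}_1^{-1}\m\mc{A} = (1-\alpha)\mc{I}_{mmp} + \alpha\mc{H}_1, \qquad \mc{I}_{nnp} - \beta\mc{B}\m\mc{F}_2^{-1} = (1-\beta)\mc{I}_{nnp} + \beta\mc{K}_2,\]
with $\mc{H}_1 := \mc{F}_1^{-1}\m\mc{G}_1$ and $\mc{K}_2 := \mc{G}_2\m\mc{F}_2^{-1}$. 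By Proposition \ref{prop-basic}(i), on each transformed slice these become $(1-\alpha)I + \alpha\tilde{\mc{H}}_1(:,:,i)$ and $(1-\beta)I + \beta\tilde{\mc{K}}_2(:,:,i)$, so their spectra consist of points $(1-\alpha)+\alpha\mu$ and $(1-\beta)+\beta\nu$ as $\mu,\nu$ range over the slice-eigenvalues of $\mc{H}_1$ and $\mc{K}_2$. A brief auxiliary step is needed: the standard matrix identity $\rho(PQ)=\rho(QP)$ applied slice-wise gives $\rho(\mc{K}_2) = \rho(\mc{F}_2^{-1}\m\mc{G}_2)$, so the second hypothesis also controls $\rho(\mc{K}_2)$.

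The core step is a single scalar lemma: for any complex $\mu$ with $|\mu|\le r<1$ and any $\alpha\in(0,\tfrac{2}{1+r})$, one has $|(1-\alpha)+\alpha\mu|<1$. Writing $\mu=a+bi$, expanding and rearranging reduces the claim to $\alpha<\tfrac{2(1-a)}{(1-a)^2+b^2}$. Substituting the worst-case constraint $b^2=r^2-a^2$ and differentiating $a\mapsto \tfrac{2(1-a)}{1-2a+r^2}$ (whose derivative equals $\tfrac{2(1-r^2)}{(1-2a+r^2)^2}>0$ precisely because $r<1$) shows that the minimum of the right-hand side over the closed disk $|\mu|\le r$ is attained at $\mu=-r$ and equals $\tfrac{2}{1+r}$. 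Applying this lemma to each slice-eigenvalue of $\mc{H}_1$ (with $r=r_1:=\rho(\mc{H}_1)$) and to each slice-eigenvalue of $\mc{K}_2$ (with $r=r_2:=\rho(\mc{K}_2)$) establishes both spectral-radius bounds, and Theorem \ref{thtpit} then delivers the convergence $\mc{X}_k\to \mc{A}^{-1}\m\mc{C}\m\mc{B}^{-1}$ for any starting tensor $\mc{X}_0$ with $\mc{Y}_0=\mc{X}_0\m\mc{B}$.

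The main obstacle is the complex-eigenvalue branch of the scalar lemma. For real $\mu\in[-r,r]$ the cutoff $\tfrac{2}{1+r}$ is visible immediately from the endpoint $\mu=-r$, but genuinely complex $\mu$ on the boundary circle $|\mu|=r$ forces the two-dimensional optimization above. The monotonicity of $a\mapsto\tfrac{2(1-a)}{1-2a+r^2}$, which fails as soon as $r\ge 1$, is exactly what reduces the disk-maximum to the real boundary point $\mu=-r$; this is where the hypothesis $\rho(\mc{F}_i^{-1}\m\mc{G}_i)<1$ is consumed a second time, beyond merely keeping the parameter interval nonempty. Everything else, namely the reduction to Theorem \ref{thtpit}, the $\rho(PQ)=\rho(QP)$ identity for $\rho(\mc{K}_2)$, and the algebraic rearrangement of the modulus, is routine.
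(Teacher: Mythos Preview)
Your proof is correct and follows the same overall strategy as the paper: reduce to Theorem~\ref{thtpit} by rewriting $\mc{I}_{mmp}-\alpha\mc{F}_1^{-1}\m\mc{A}=(1-\alpha)\mc{I}_{mmp}+\alpha\mc{F}_1^{-1}\m\mc{G}_1$ (and similarly for $\mc{B}$), then bounding the spectral radii. The difference is tactical. The paper handles the scalar step with the triangle inequality $|1-\alpha+\alpha\lambda|\le|1-\alpha|+\alpha\rho(\mc{F}_1^{-1}\m\mc{G}_1)$ and a two-case split on $\alpha\le 1$ versus $\alpha>1$, which is shorter than your disk-optimization argument. Your route, in turn, has the advantage of exhibiting the cutoff $\tfrac{2}{1+r}$ as the exact minimum over the disk (so the parameter range is sharp), and you correctly make explicit the $\rho(PQ)=\rho(QP)$ identification needed to control $\rho(\mc{G}_2\m\mc{F}_2^{-1})$ from the hypothesis on $\rho(\mc{F}_2^{-1}\m\mc{G}_2)$, a point the paper passes over with ``in a similar manner.''
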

\begin{proof}
From the splitting of $\mc{A}=\mc{F}_1-\mc{G}_1$, we get
\[\mc{I}_{mmp}-\alpha\mc{F}_{1}^{-1}\m\mc{A}=\mc{I}_{mmp}-\alpha\mc{F}_{1}^{-1}\m(\mc{F}_1-\mc{G}_1)=(1-\alpha)\mc{I}_{mmp}+\alpha\mc{F}_{1}^{-1}\m\mc{G}_1.\]
If  $\lambda_i (1\leq i\leq mp)$ is an  eigenvalue of the tensor $\mc{F}_{1}^{-1}\m \mc{G}_1$, then 
 \begin{equation}\label{eq8}
  \rho (\mc{I}_{mmp}-\alpha\mc{F}_{1}^{-1}\m\mc{A})= \max_{1 \leq i \leq mp} |1-\alpha+\alpha\lambda_i|\leq |1-\alpha|+\alpha\rho(\mc{F}_{1}^{-1}\m \mc{G}_1)
 \end{equation}
For $0 < \alpha \leq 1$ and applying equation \eqref{eq8}, we have 
 \begin{equation}\label{eq9}
 \rho (\mc{I}_{mmp}-\alpha\mc{F}_{1}^{-1}\m\mc{A})\leq 1-\alpha + \alpha\rho(\mc{F}_{1}^{-1}\m \mc{G}_1)< 1.
 \end{equation}
Similarly, for $1<\alpha< \frac{2}{1+\rho(\mc{F}_{1}^{-1}\m \mc{G}_1)}$, and combining with equation \eqref{eq8}, we get 
 \begin{equation}\label{eq10}  
 \rho (\mc{I}-\alpha\mc{F}_{1}^{-1}\m\mc{A})\leq  \alpha-1+\alpha\rho(\mc{F}_{1}^{-1}\m \mc{G}_1)< \frac{2}{1+\rho(\mc{F}_{1}^{-1}\m \mc{G}_1)}-1+ \frac{2\rho(\mc{F}_{1}^{-1}\m \mc{G}_1)}{1+\rho(\mc{F}_{1}^{-1}\m \mc{G}_1)}=1.
 \end{equation}
 Thus $\rho(\mc{I}_{mmp}-\alpha\mc{F}_{1}^{-1}\m\mc{A})< 1$  follows from  equation \eqref{eq9} and \eqref{eq10}. In a similar manner, we can show that $
  \rho (\mc{I}_{nnp}- \beta\mc{B}\m\mc{F}_{2}^{-1})< 1$. Hence, according to Theorem \ref{thtpit}, $\left\{\mc{X}_k\right\}$  converges  to the exact solution $\mc{A}^{-1}\m\mc{C}\m\mc{B}^{-1}$.
\end{proof}
Next, we discuss the convergence of the two-step parameterized iterative method for suitable choices of $\mc{F}_i$ and $\mc{G}_i$, $i=1,2$.  For $A\in\mbc^{m\times m\times p}$ and $A\in\mbc^{n\times n\times p}$, we first define the following tensors called the diagonal, strictly lower triangular and strictly upper triangular, respectively,  as follows:
\[\mc{D}_1(:,:,i)=diag(\mc{A}(:,:,i)), ~\mc{L}_1(:,:,i)=lowerdiag(\mc{A}(:,:,i)),~\mc{U}_1=\mc{A}-\mc{L}_1,~i=1,2,\ldots,p.\]
\[\mc{D}_2(:,:,i)=diag(\mc{B}(:,:,i)), ~\mc{L}_2(:,:,i)=lowerdiag(\mc{B}(:,:,i)),~\mc{U}_2=\mc{B}-\mc{L}_2,~i=1,2,\ldots,p.\]
Thus we can write the tensors $\mc{A}$ and $\mc{B}$ as $\mc{A}=\mathscr{F}_1-\mathscr{G}_1$ and $\mc{B}=\mathscr{F}_2-\mathscr{G}_2$, where 
\begin{equation}\label{eq11}
    \left\{\begin{array}{ll} 
\mathscr{F}_1&=\frac{1}{\omega_1}\left(\mc{D}_1+\kappa_1\mc{L}_1\right), ~~\mathscr{G}_1=\frac{1}{\omega_1}\left((1-\omega_1)\mc{D}_1+(\kappa_1-\omega_1)\mc{L}_1-\omega_1 \mc{U}_1\right)\\
\mathscr{F}_2&=\frac{1}{\omega_2}\left(\mc{D}_2+\kappa_2\mc{L}_2\right), ~~\mathscr{G}_2=\frac{1}{\omega_2}\left((1-\omega_2)\mc{D}_2+(\kappa_2-\omega_2)\mc{L}_2-\omega_2 \mc{U}_2\right)
\end{array}
\right.    
\end{equation}
Applying equation \eqref{tpit}  for the specific tensor splittings defined in equation \eqref{eq11}, we obtain the following iterative method (called the accelerated overrelaxation two-step parameterized iterative (AOR-TSPI) method).
\begin{equation}\label{eq12}
\left\{\begin{array}{ll}\mc{Y}_{k+1}&=\left(\mc{I}_{mmp}-\alpha\left(\frac{1}{\omega_1}\mc{D}_1+\frac{\kappa_1}{\omega_1}\mc{L}_1\right)^{-1}\m\mc{A} \right)\m\mc{Y}_k+\alpha\left(\frac{1}{\omega_1}\mc{D}_1+\frac{\kappa_1}{\omega_1}\mc{L}_1\right)^{-1}\m\mc{C}\\
\mc{X}_{k+1}&=\mc{X}_k\m\left(\mc{I}_{nnp}-\beta\mc{B}\m\left(\frac{1}{\omega_2}\mc{D}_2+\frac{\kappa_2}{\omega_2}\mc{L}_2\right)^{-1} \right)+ \beta\mc{Y}_{k+1}\m\left(\frac{1}{\omega_2}\mc{D}_2+\frac{\kappa_2}{\omega_2}\mc{L}_2\right)^{-1} 
\end{array}
\right.    
\end{equation}
Based on the AOR-TSPI method, the following algorithm is established. 
\begin{algorithm}[H]
 \caption{ AOR-TSPI method for solving $\mc{A}\m\mc{X}\m\mc{B}=\mc{C}$} \label{alg:AOR-TSPI}
\begin{algorithmic}[1]
\State {\bf Input} $\mc{A}\in \mathbb{C}^{m \times m\times p}$, $\mc{B} \in \mathbb{C}^{n \times n\times p}$,  $\mc{C} \in \mathbb{C}^{m \times n\times p}$,   $M\in\mbc^{p\times p}$ $\alpha,~\beta,~\omega_1,~\omega_2,~\kappa_1,~\kappa_2$.
\State {\bf Initial guess }$\mc{X}_0 \in \mathbb{C}^{m \times n\times p}$ and $\mc{Y}_0=\mc{X}_0\m\mc{B}$
\State Compute $\mc{D}_1,~\mc{D}_2,~\mc{L}_1,~\mc{L}_2$
\While{({true})}
\State $\mc{Y}_{k+1}=\left(\mc{I}_{mmp}-\alpha\left(\frac{1}{\omega_1}\mc{D}_1+\frac{\kappa_1}{\omega_1}\mc{L}_1\right)^{-1}\m\mc{A} \right)\m\mc{Y}_k+\alpha\left(\frac{1}{\omega_1}\mc{D}_1+\frac{\kappa_1}{\omega_1}\mc{L}_1\right)^{-1}\m\mc{C}$
  \State $ \mc{X}_{k+1}=\mc{X}_k\m\left(\mc{I}_{nnp}-\beta\mc{B}\m\left(\frac{1}{\omega_2}\mc{D}_2+\frac{\kappa_2}{\omega_2}\mc{L}_2\right)^{-1} \right)+ \beta\mc{Y}_{k+1}\m\left(\frac{1}{\omega_2}\mc{D}_2+\frac{\kappa_2}{\omega_2}\mc{L}_2\right)^{-1}  
  $
   \If{tol $\leq \epsilon$}
   \State\textbf{break} 
  \EndIf
  \State $\mc{Y}_0\gets \mc{Y}_k$ and $\mc{X}_0\gets \mc{X}_k$
   \EndWhile 
\State \Return $\mc{X}_k$ 
 \end{algorithmic}
\end{algorithm}
The AOR-TSPI method is  further classified for different choices of $\alpha$, $\beta$, $\omega_i$ and $\kappa_i$ as per the following Table \ref{tab:aortspi}.
\begin{table}[H]
    \centering
    \caption{Classification of AOR-TSPI methods}
    \vspace{0.2cm}
    \begin{tabular}{|c|c|}
    \hline
       Method Name  &  Parameters\\
       \hline
         Higher order Jacobi two-step iterative&\\
         (HOJ-TSI) method  & $\alpha=\beta=\omega_1=\omega_2=1,~\kappa_1=\kappa_2=0$\\
       \hline
         Higher order Jacobi two-step parametrized iterative&\\
         (HOJ-TSPI) method  & $\omega_1=\omega_2=1,~\kappa_1=\kappa_2=0$\\
         \hline
         Higher order Gauss-Seidel two-step  iterative&\\
         (HOGS-TSI) method  & $\alpha=\beta=\omega_1=\omega_2=\kappa_1=1=\kappa_2$\\
          \hline
         Higher order Gauss-Seidel two-step parametrized iterative&\\
         (HOGS-TSPI) method  & $\omega_1=\omega_2=1,~\kappa_1=1=\kappa_2$\\
         \hline
         Higher order successive overrelaxation  two-step parametrized&\\iterative
         (HOSOR-TSPI) method  & $\omega_1=\kappa_1,~\omega_2=\kappa_2$\\
         \hline
    \end{tabular}
    \label{tab:aortspi}
\end{table}
The convergence of AOR-TSPI methods are discussed in the next results, which are special cases of the Theorem \ref{thm-cgt}.
\begin{theorem}
Consider $M\in\mbc^{p\times p}$, $\mc{A}$, $\mc{B}$,  $\mc{C}$ as defined in \eqref{maineq}. Let $0<\alpha<\frac{2}{1+\rho\left(\mathscr{F}_1^{-1} \m\mathscr{G}_1\right)}, ~0<\beta<\frac{2}{1+\rho\left(\mathscr{F}_2^{-1}\m \mathscr{G}_2\right)}$.  If $\mc{A}$ and $\mc{B}$ are strictly diagonally dominant tensors then both HOJ-TSPI and HOGS-TSPI methods are convergent.
\end{theorem}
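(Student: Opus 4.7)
The plan is to realize the statement as a direct corollary of Theorem \ref{thm-cgt}, once the spectral radii of the iteration tensors associated with the Jacobi and Gauss--Seidel splittings are shown to be strictly less than one. So, first I would write down, from equation \eqref{eq11}, the splittings associated with each method: for HOJ-TSPI (with $\omega_i=1$, $\kappa_i=0$), we have $\mathscr{F}_1=\mc{D}_1$, $\mathscr{G}_1=-(\mc{L}_1+\mc{U}_1)$, and analogously $\mathscr{F}_2=\mc{D}_2$, $\mathscr{G}_2=-(\mc{L}_2+\mc{U}_2)$; for HOGS-TSPI (with $\omega_i=\kappa_i=1$), we have $\mathscr{F}_1=\mc{D}_1+\mc{L}_1$, $\mathscr{G}_1=-\mc{U}_1$, and similarly for the $\mc{B}$-splitting. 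In both cases $\mathscr{F}_i$ is triangular with diagonal $\mc{D}_i$, and the diagonals of $\tilde{\mc{D}}_i(:,:,k)$ are nonzero because strict diagonal dominance of $\mc{A}$ and $\mc{B}$ guarantees nonzero diagonal entries in every frontal slice of $\tilde{\mc{A}}$ and $\tilde{\mc{B}}$; hence $\mathscr{F}_i$ is invertible under the $M$-product.

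Next, I would transfer everything to the ``tilde'' side and reduce to a slice-wise matrix statement. By Proposition \ref{prop-basic}, $\mathscr{F}_i^{-1}\m \mathscr{G}_i$ satisfies $\wtilde{\mathscr{F}_i^{-1}\m\mathscr{G}_i}=\tilde{\mathscr{F}}_i^{-1}\Delta \tilde{\mathscr{G}}_i$, so that for every frontal slice index $k$,
\begin{equation*}
\bigl(\wtilde{\mathscr{F}_i^{-1}\m\mathscr{G}_i}\bigr)(:,:,k)=\tilde{\mathscr{F}}_i(:,:,k)^{-1}\,\tilde{\mathscr{G}}_i(:,:,k).
\end{equation*}
By the definition of spectral radius under the $M$-product, this gives
\begin{equation*}
\rho\bigl(\mathscr{F}_i^{-1}\m\mathscr{G}_i\bigr)=\max_{1\le k\le p}\rho\Bigl(\tilde{\mathscr{F}}_i(:,:,k)^{-1}\tilde{\mathscr{G}}_i(:,:,k)\Bigr).
\end{equation*}
Because $\mc{A}$ and $\mc{B}$ are strictly diagonally dominant (meaning $\tilde{\mc{A}}(:,:,k)$ and $\tilde{\mc{B}}(:,:,k)$ are strictly diagonally dominant for every $k$) and because $\mc{D}_i$, $\mc{L}_i$, $\mc{U}_i$ were defined entry-wise on each frontal slice, the slice $\tilde{\mathscr{F}}_i(:,:,k)^{-1}\tilde{\mathscr{G}}_i(:,:,k)$ is exactly the classical Jacobi (resp.\ Gauss--Seidel) iteration matrix of a strictly diagonally dominant matrix.

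At this point I would invoke the classical matrix results: for any strictly diagonally dominant matrix $A$, both the Jacobi iteration matrix $D^{-1}(L+U)$ and the Gauss--Seidel iteration matrix $(D+L)^{-1}U$ have spectral radius strictly less than $1$. Applying this slice-wise gives $\rho(\mathscr{F}_i^{-1}\m\mathscr{G}_i)<1$ for $i=1,2$ in both methods. The hypotheses $0<\alpha<\tfrac{2}{1+\rho(\mathscr{F}_1^{-1}\m\mathscr{G}_1)}$ and $0<\beta<\tfrac{2}{1+\rho(\mathscr{F}_2^{-1}\m\mathscr{G}_2)}$ are then exactly the intervals required by Theorem \ref{thm-cgt}, so Theorem \ref{thm-cgt} applies and the sequence $\{\mc{X}_k\}$ produced by the HOJ-TSPI or HOGS-TSPI scheme converges to $\mc{A}^{-1}\m\mc{C}\m\mc{B}^{-1}$.

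The main obstacle is really only the bookkeeping in the reduction to the matrix setting: one must verify carefully, using Proposition \ref{prop-basic}, that the $M$-product splittings pull back to the ordinary diagonal/lower/upper splittings of the matrix slices (so that the classical Jacobi/Gauss--Seidel matrices appear exactly). Once that identification is made, the convergence analysis reduces to a well-known matrix fact and feeds directly into Theorem \ref{thm-cgt}.
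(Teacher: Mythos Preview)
Your proposal is correct and follows exactly the route the paper intends: the paper states this theorem without an explicit proof, merely noting it is a special case of Theorem~\ref{thm-cgt}, and your argument fills in precisely the missing verification that the Jacobi and Gauss--Seidel iteration tensors have spectral radius less than one by reducing, via Proposition~\ref{prop-basic}, to the classical matrix result for strictly diagonally dominant slices. The only point worth making fully explicit is the observation that $\tilde{\mc{D}}_i(:,:,k)=\mathrm{diag}\bigl(\tilde{\mc{A}}(:,:,k)\bigr)$ (and similarly for $\tilde{\mc{L}}_i,\tilde{\mc{U}}_i$), which holds because the mode-$3$ product with $M$ acts as a linear combination of frontal slices and therefore commutes with taking diagonal, strictly-lower, and strictly-upper parts; you flagged this bookkeeping, and once it is stated the reduction is complete.
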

\begin{example}\rm\label{ex-3.6}
We randomly consider tensors $\mc{A},~\mc{B},~\mc{C}\in\mbc^{n\times n\times\times n}$, where tensors $\mc{A}$ and $\mc{B}$ both have a dominant diagonal property. The matrix $M$ is generated using the Matlab function $M=rand(n)$. A comparison analysis of the mean CPU time (MT) for different sizes of $\mc{A}$, $\mc{B}$, and parameters is presented in Table \ref{tab-hosgj} and Table \ref{tab1-hosgj}.
\end{example}
\begin{table}[H]
    \centering
    \caption{Comparison of error and mean CPU time for HOJ-TSPI and HOGS-TSPI for different tensors\\ and parameters where $\epsilon =10^{-15}$ }
    \vspace{0.2cm}
    \renewcommand{\arraystretch}{1.2}
    \begin{tabular}{c|c|c|c|c|c|c}
    \hline
 $\alpha=\beta$&   Method& $\mc{A}\in\mbc^{m\times m\times p}$& & & &\\
 ($\alpha$)& &
  $\mc{B}\in\mbc^{n\times n\times p}$
 & IT& \scriptsize{$\|\mc{C}-\mc{A}\m\mc{X}_{k}\m\mc{B}\|$} & \scriptsize{$\|\mc{X}_{k}-\mc{A}^{-1}\m\mc{C}\m\mc{B}^{-1}\|$}& MT\\
 \hline 
 $0.9$&HOJ-TSPI&$m=n=p=100$ &9.58&$3.9395e^{-12}$&$2.8840e^{-16}$&77\\
 $0.9$&HOGS-TSPI&$m=n=p=100$ &3.22&$5.5994e^{-13}$&$3.4152e^{-16}$&23  \\
 \hline
 $0.9$&HOJ-TSPI&$m=n=p=150$ &42.14&$8.6087e^{-12}$&$2.8063e^{-16}$&79\\
 $0.9$&HOGS-TSPI&$m=n=p=150$ &13.28&$2.2713e^{-12}$&$1.8466e^{-16}$&23  \\
 \hline
 $0.9$&HOJ-TSPI&$m=n=p=200$ &134.57&$1.2909e^{-12}$&$2.3627e^{-16}$&80\\
 $0.9$&HOGS-TSPI&$m=n=p=200$ &44.82&$9.7739e^{-13}$&$3.2426e^{-17}$&24  \\
 \hline
  $0.75$&HOJ-TSPI&$m=n=p=100$ &4.73&$ 1.3412e^{-12}$&$9.7249e^{-17}$&37\\
 $0.75$&HOGS-TSPI&$m=n=p=100$ &4.51&$ 1.1910e^{-13}$&$1.6670e^{-16}$&31  \\
 \hline
 $0.75$&HOJ-TSPI&$m=n=p=150$ &19.71&$ 4.5162e^{-12}$&$1.4615e^{-16}$&37\\
 $0.75$&HOGS-TSPI&$m=n=p=150$ &19.37&$3.4486e^{-12}$&$2.1169e^{-16}$&31  \\
 \hline
 $0.75$&HOJ-TSPI&$m=n=p=200$ &64.48&$1.1080e^{-11}$&$2.0256e^{-16}$&37\\
 $0.75$&HOGS-TSPI&$m=n=p=200$ &59.49&$1.9951e^{-13}$&$7.3316e^{-17}$&32 \\
 \hline
   $0.5$&HOJ-TSPI&$m=n=p=100$ &7.59&$ 3.0273e^{-12}$&$7.4155e^{-16}$&60\\
 $0.5$&HOGS-TSPI&$m=n=p=100$ &8.42&$2.7457e^{-13}$&$5.9922e^{-16}$&59  \\
 \hline
 $0.5$&HOJ-TSPI&$m=n=p=150$ &30.39&$ 9.7139e^{-12}$&$1.0418e^{-15}$&59\\
 $0.5$&HOGS-TSPI&$m=n=p=150$ &33.05&$8.2652e^{-12}$&$7.6546e^{-16}$&59  \\
 \hline
 $0.5$&HOJ-TSPI&$m=n=p=200$ &100.03&$1.4849e^{-11}$&$ 8.8465e^{-16}$&59\\
 $0.5$&HOGS-TSPI&$m=n=p=200$ &107.37&$2.145e^{-12}$&$3.1534e^{-16}$&59\\
 \hline
    \end{tabular}
    \label{tab-hosgj}
\end{table}
\begin{table}[H]
    \centering
    \caption{A comparison of the errors and average CPU timee for both HOJ-TSPI and HOGS-TSPI across various tensors and parameters, with taking the tolerance  $\epsilon =10^{-10}$ }
    \vspace{0.2cm}
    \renewcommand{\arraystretch}{1.2}
    \begin{tabular}{c|c|c|c|c|c|c}
    \hline
 $\alpha=\beta$&   Method& $\mc{A}\in\mbc^{m\times m\times p}$& & & &\\
 ($\alpha$)& &
  $\mc{B}\in\mbc^{n\times n\times p}$
 & IT& \scriptsize{$\|\mc{C}-\mc{A}\m\mc{X}_{k}\m\mc{B}\|$} & \scriptsize{$\|\mc{X}_{k}-\mc{A}^{-1}\m\mc{C}\m\mc{B}^{-1}\|$}& MT\\
 \hline 
 $0.95$&HOJ-TSPI&$m=n=p=100$ &9.57&$3.9958e^{-7}$&$2.9211e^{-11}$&77\\
 $0.95$&HOGS-TSPI&$m=n=p=100$ &2.46&$1.3020e^{-8}$&$2.5792e^{-12}$&17  \\
 \hline
 $0.95$&HOJ-TSPI&$m=n=p=200$ &132.45&$1.6024e^{-6}$&$2.9335e^{-11}$&80\\
 $0.95$&HOGS-TSPI&$m=n=p=200$ &30.32&$1.1134e^{-7}$&$5.9120e^{-12}$&17 \\
 \hline
 $0.95$&HOJ-TSPI&$m=n=p=300$ &614.99&$4.0467e^{-6}$&$3.2944e^{-11}$&81\\
 $0.95$&HOGS-TSPI&$m=n=p=300$  &154.78&$3.8832e^{-7}$&$9.0461e^{-12}$&17 \\
 \hline
  $0.85$&HOJ-TSPI&$m=n=p=100$ &5.46&$4.1306e^{-7}$&$3.0140e^{-11}$&43\\
 $0.85$&HOGS-TSPI&$m=n=p=100$ &2.58&$6.6664e^{-8}$&$8.8243e^{-12}$&18\\
 \hline
 $0.85$&HOJ-TSPI&$m=n=p=200$ & 74.79&$ 1.3266e^{-6}$&$2.4257e^{-11}$&45\\
 $0.85$&HOGS-TSPI&$m=n=p=200$ &32.48&$4.8565ee^{-7}$&$1.5349e^{-11}$&18 \\
 \hline
 $0.85$&HOJ-TSPI&$m=n=p=300$ &360.80&$ 2.3240e^{-6}$&$1.8920e^{-11}$&46\\
 $0.85$&HOGS-TSPI&$m=n=p=300$ &188.70&$ 1.0246e^{-6}$&$ 2.3325e^{-12}$&19 \\
 \hline
  $0.75$&HOJ-TSPI&$m=n=p=100$ &3.35&$ 2.9794e^{-7}$&$ 2.1749e^{-11}$&26\\
 $0.75$&HOGS-TSPI&$m=n=p=100$ &3.24&$1.6153e^{-7}$&$3.1980e^{-11}$&22  \\
 \hline
 $0.75$&HOJ-TSPI&$m=n=p=200$ &47.17&$ 4.4262e^{-7}$&$8.0766e^{-12}$&28\\
 $0.75$&HOGS-TSPI&$m=n=p=200$ &41.17&$ 3.2999e^{-7}$&$1.4047e^{-11}$&23 \\
 \hline
 $0.75$&HOJ-TSPI&$m=n=p=300$ &216.78&$ 1.2725e^{-6}$&$1.0348e^{-11}$&28\\
 $0.75$&HOGS-TSPI&$m=n=p=300$ &196.33&$ 1.0246e^{-6}$&$1.9386e^{-11}$&23 \\
 \hline
    \end{tabular}
    \label{tab1-hosgj}
\end{table}

\begin{example}\rm\label{ex-3.6.1}
We randomly consider tensors $\mc{A},~\mc{B}\in\mbc^{n\times n\times\times n}$,  where tensors $\mc{A}$ and $\mc{B}$ both have a dominant diagonal property. The matrix $M$ is generated using the Matlab function $M=rand(n)$ and the tensor $\mc{C}$ is chosen with entries that are $c_{ijk}=1$ for $1\leq i,j,k\leq n$. A comparison analysis of the mean CPU time (MT) for different sizes of $\mc{A}$, $\mc{B}$, and $\mc{C}$ for specific values of the parameters $\alpha$ and $\beta$ are presented in Figure \ref{Fig2d_comp}. In addition, we discussed the mean CPU time for a specific randomly generated tensors by varying the parameters in Figures \ref{Fig2d_alpha_comp}.
\end{example}
\begin{figure}[H]
\centering
\subfigure[$\alpha=0.8=\beta$]{\includegraphics[scale=0.45]{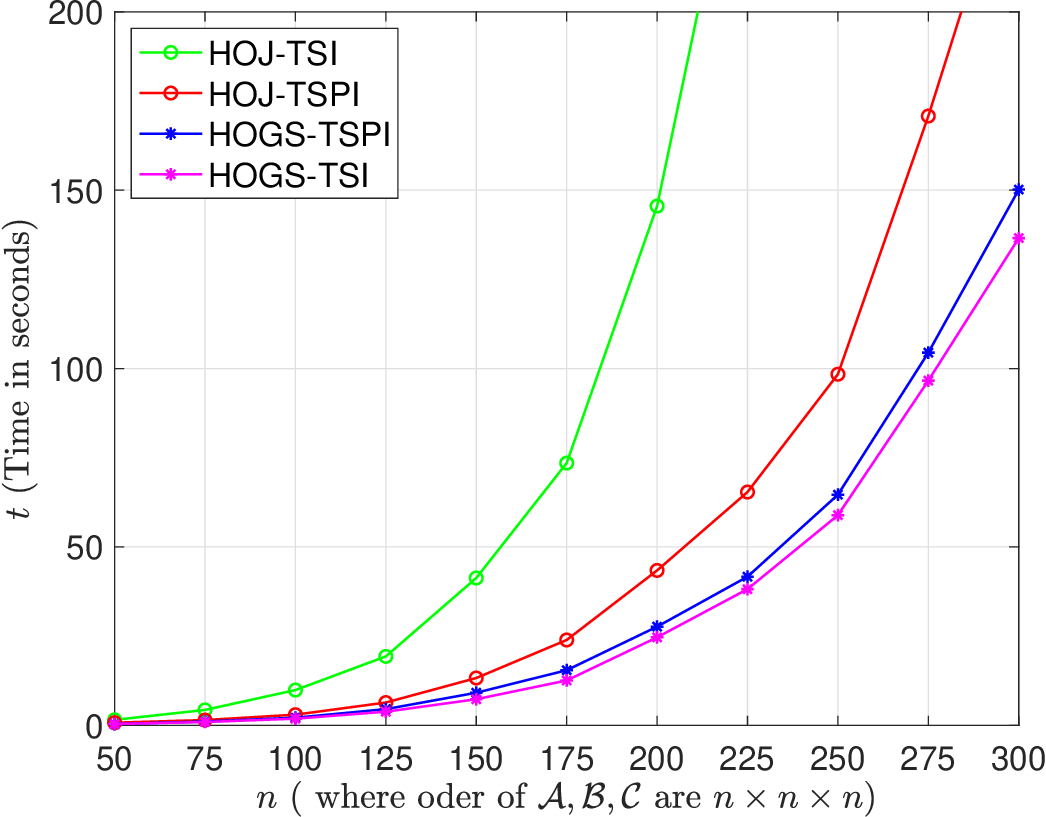}}
~~\subfigure[$\alpha=0.9=\beta$]{\includegraphics[scale=0.45]{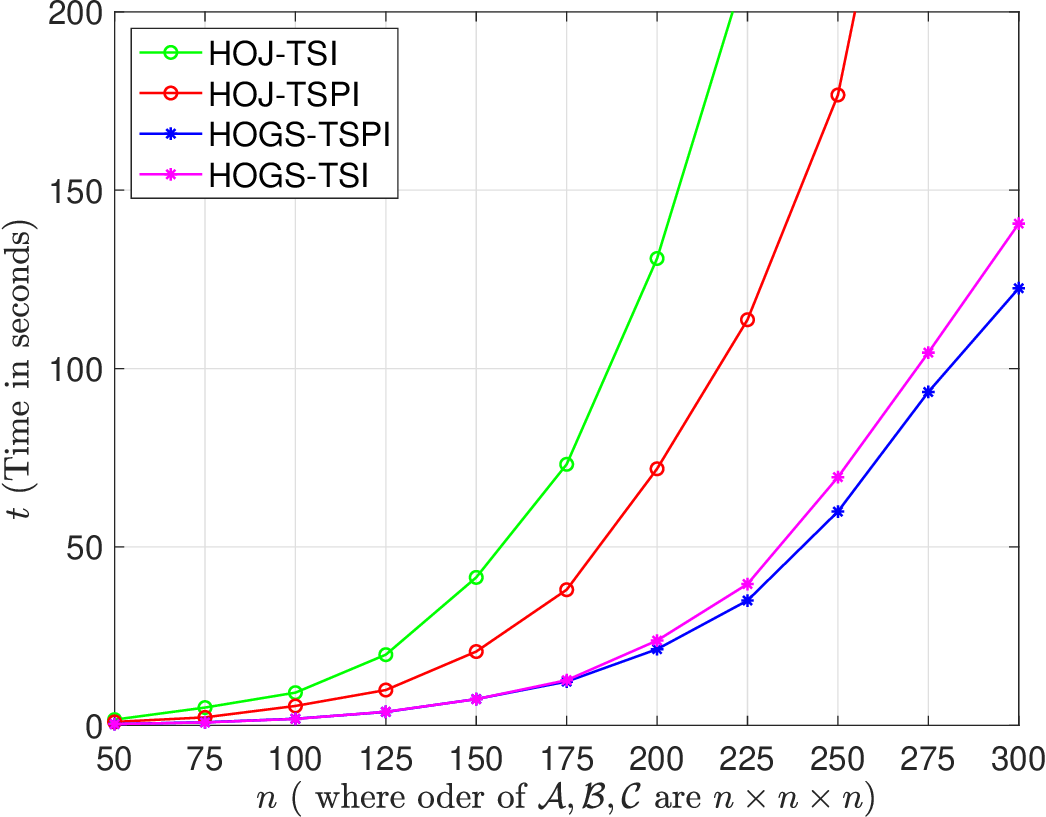}}
\subfigure[$\alpha=0.95=\beta$]{\includegraphics[scale=0.45]{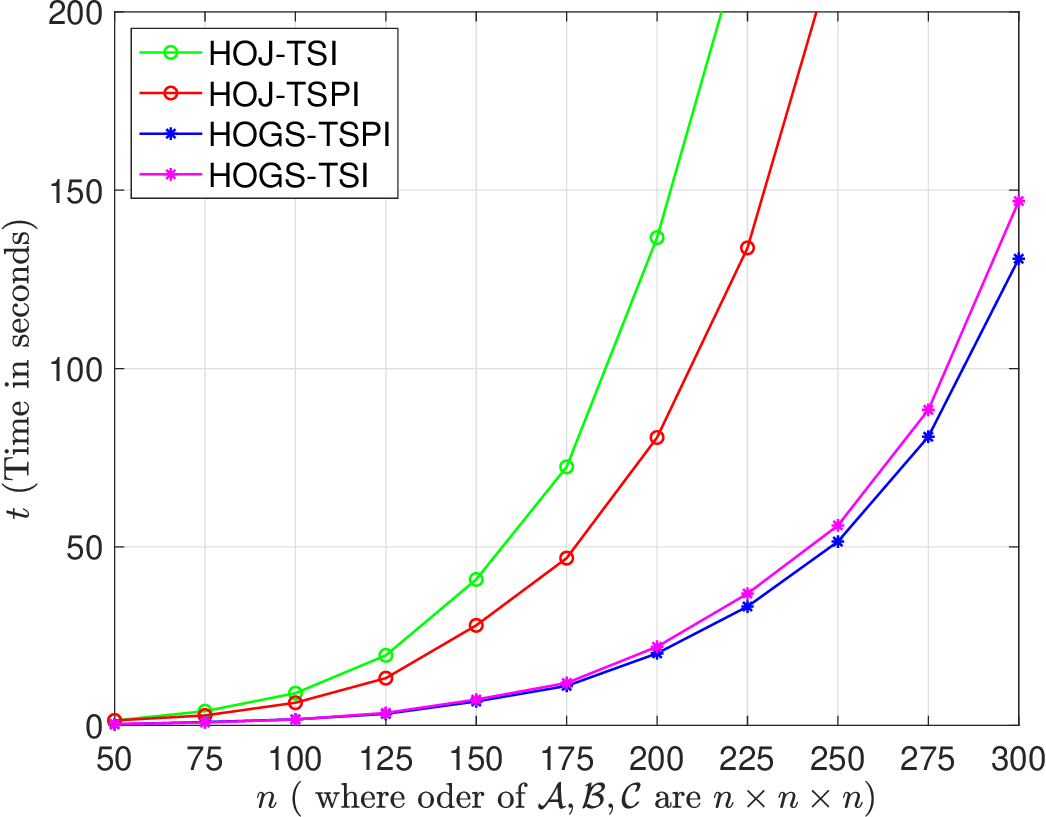}}
~~\subfigure[$\alpha=1.1=\beta$]{\includegraphics[scale=0.45]{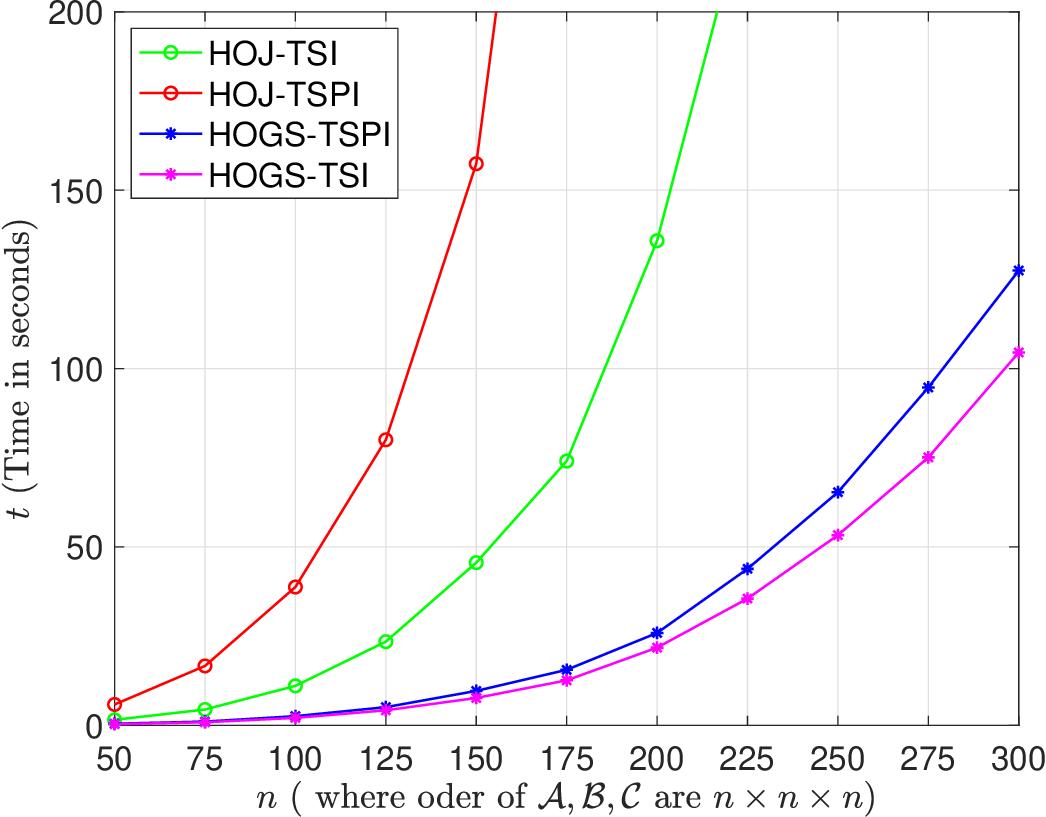}}
\caption{A comparative analysis of the average CPU times for higher-order TSI and TPSI methods, focusing on various parameters such as $\alpha$ and $\beta$}
\label{Fig2d_comp}
\end{figure}
\begin{figure}[H]
\centering
\subfigure[$n=100$]{\includegraphics[scale=0.45]{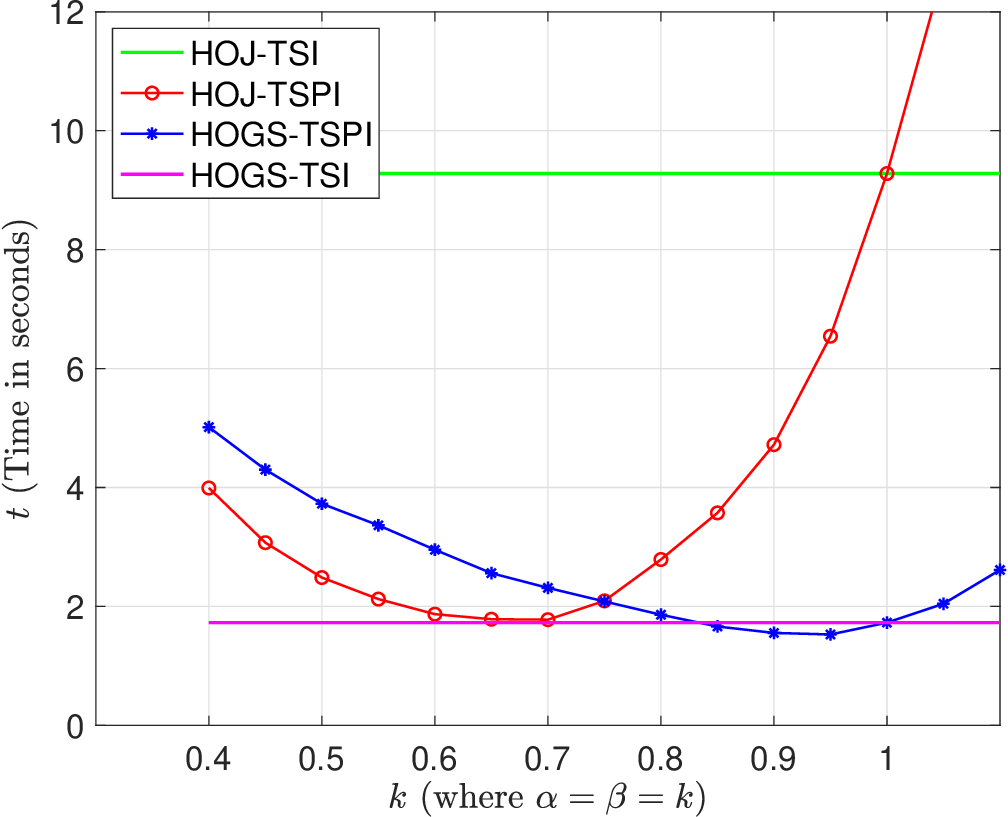}}
~~\subfigure[$n=150$]{\includegraphics[scale=0.45]{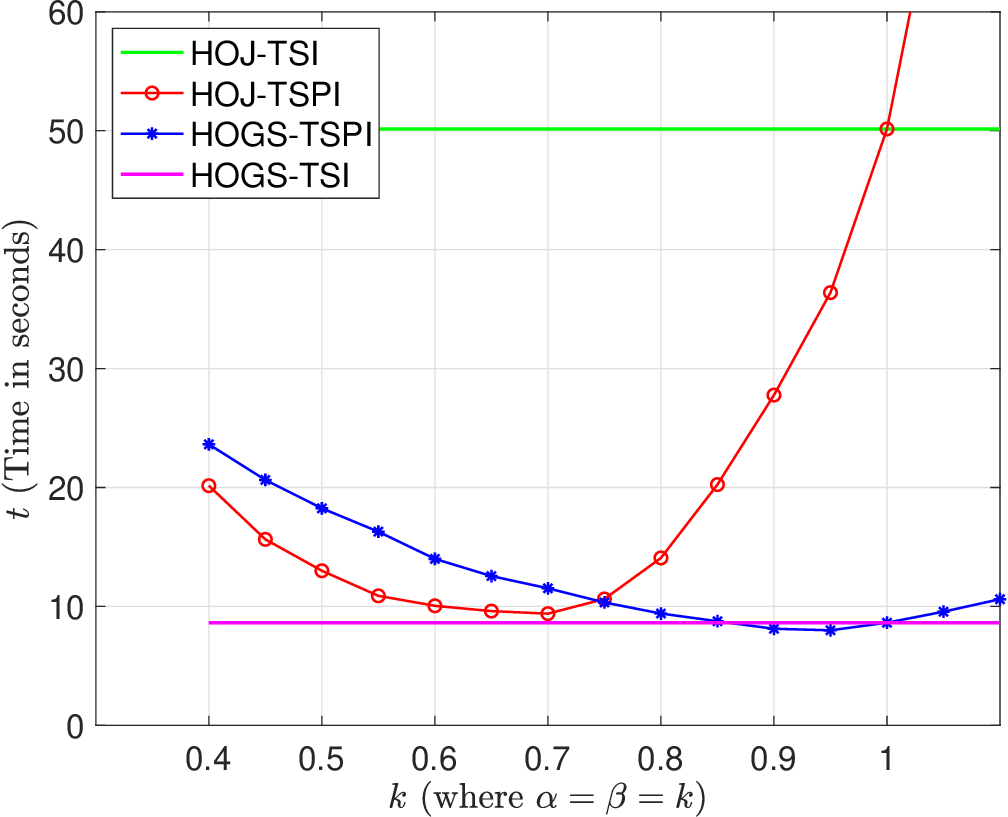}}
\subfigure[$n=200$]{\includegraphics[scale=0.45]{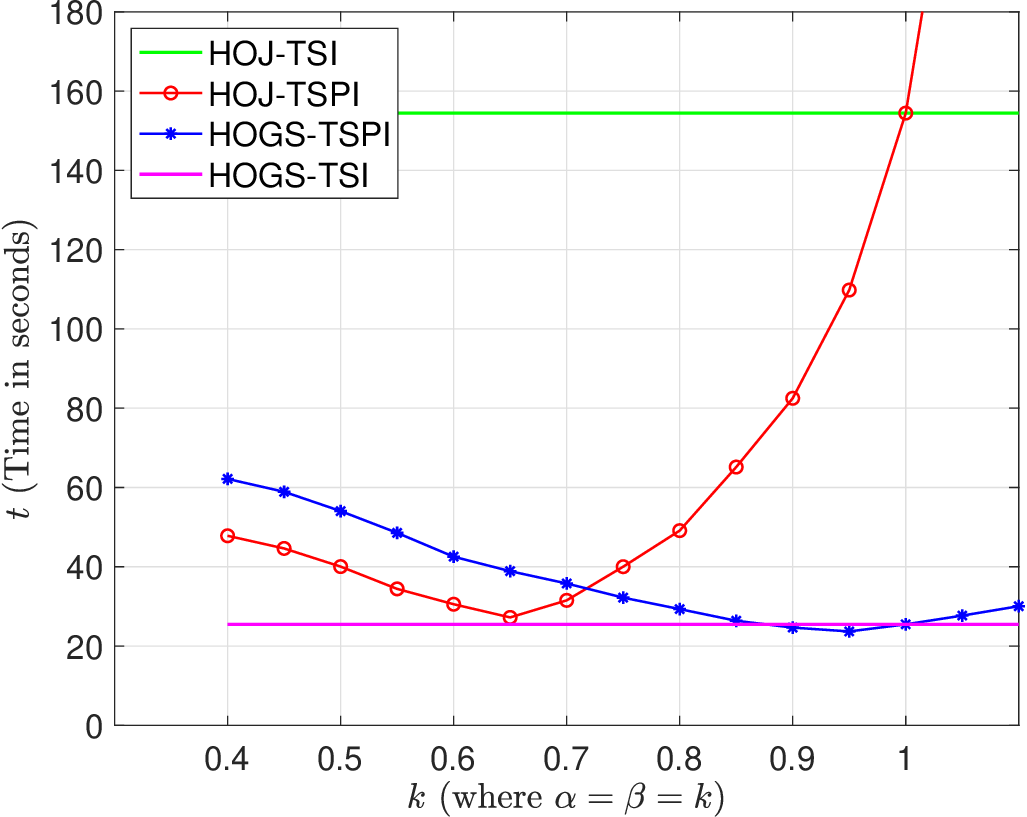}}
~~\subfigure[$n=250$]{\includegraphics[scale=0.45]{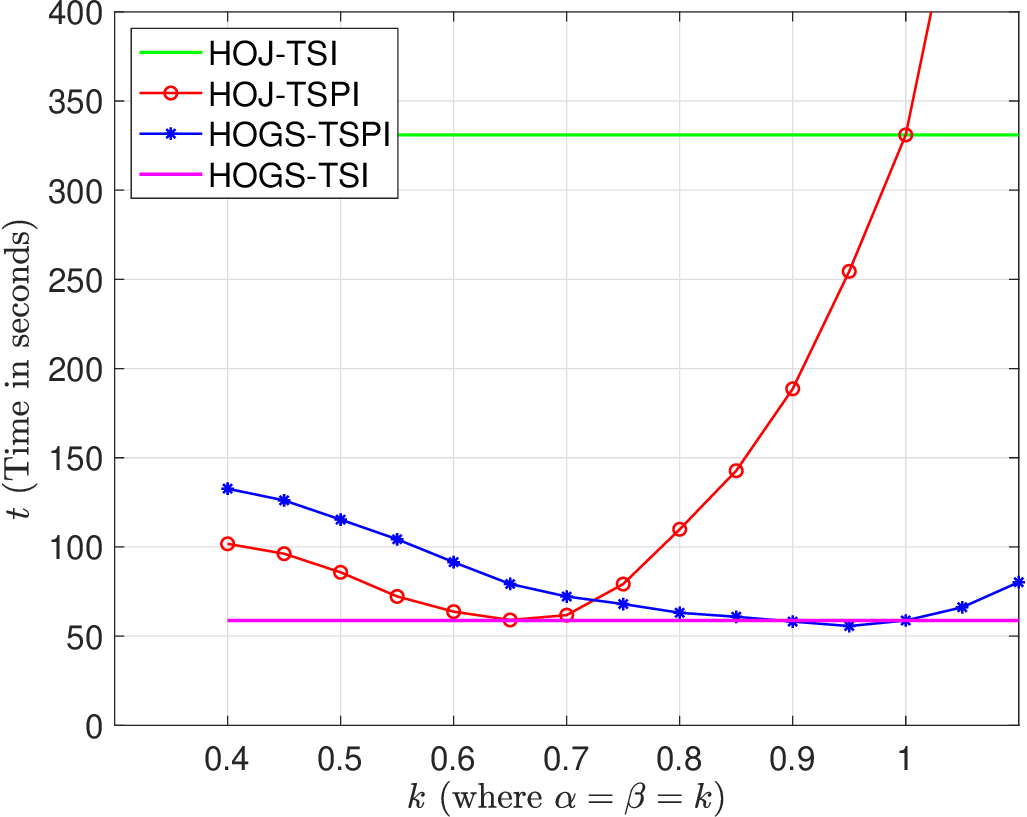}}
\caption{Comparison analysis of mean CPU time of higher order TSI and TPSI methods for different tensor sizes $n\times n\times  n$ ( where $\mc{A},~\mc{B},~\mc{C}\in\mathbb{C}^{n\times n\times n}$ ) by varying parameters $\alpha$ and $\beta$}
\label{Fig2d_alpha_comp}
\end{figure}

\begin{figure}[H]
\centering
\subfigure[HOJ-TSPI ($n=100$)]{\includegraphics[scale=0.4]{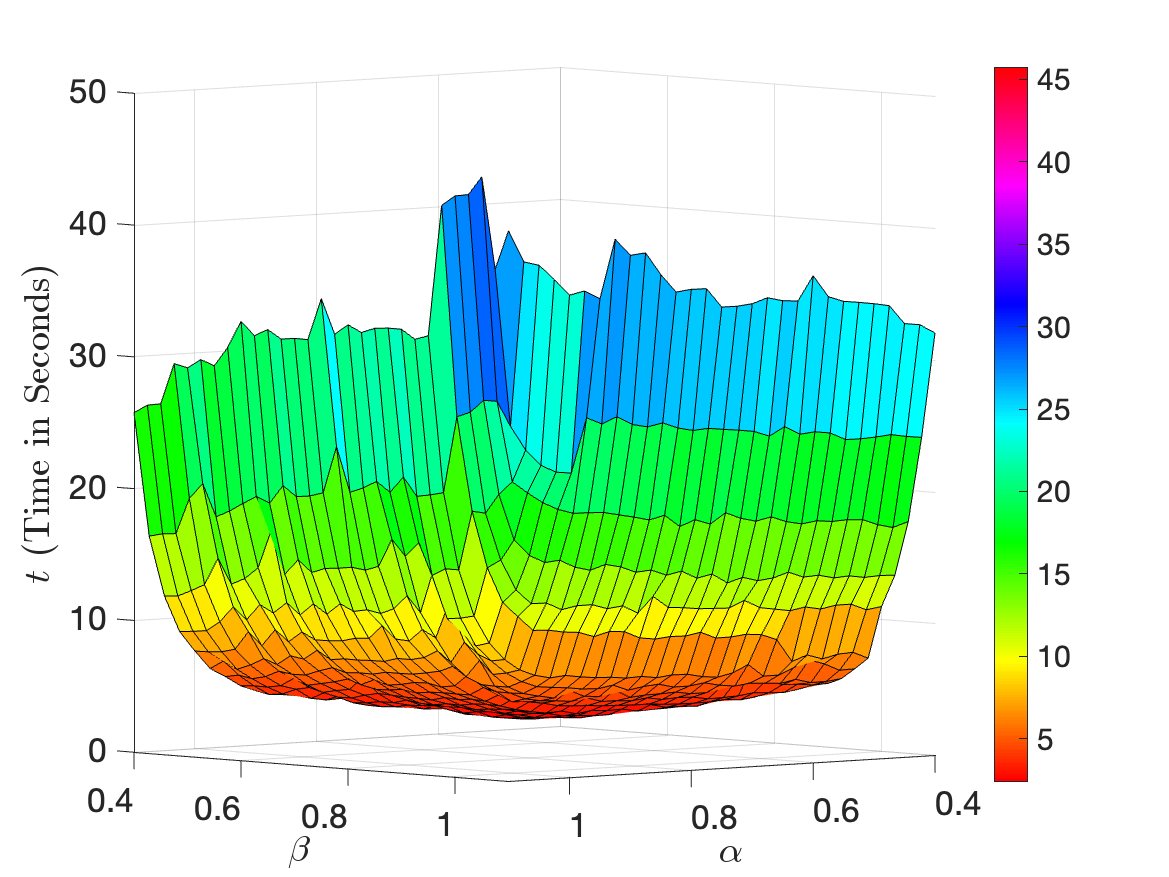}}
\subfigure[HOGS-TSPI  ($n=100$)]{\includegraphics[scale=0.4]{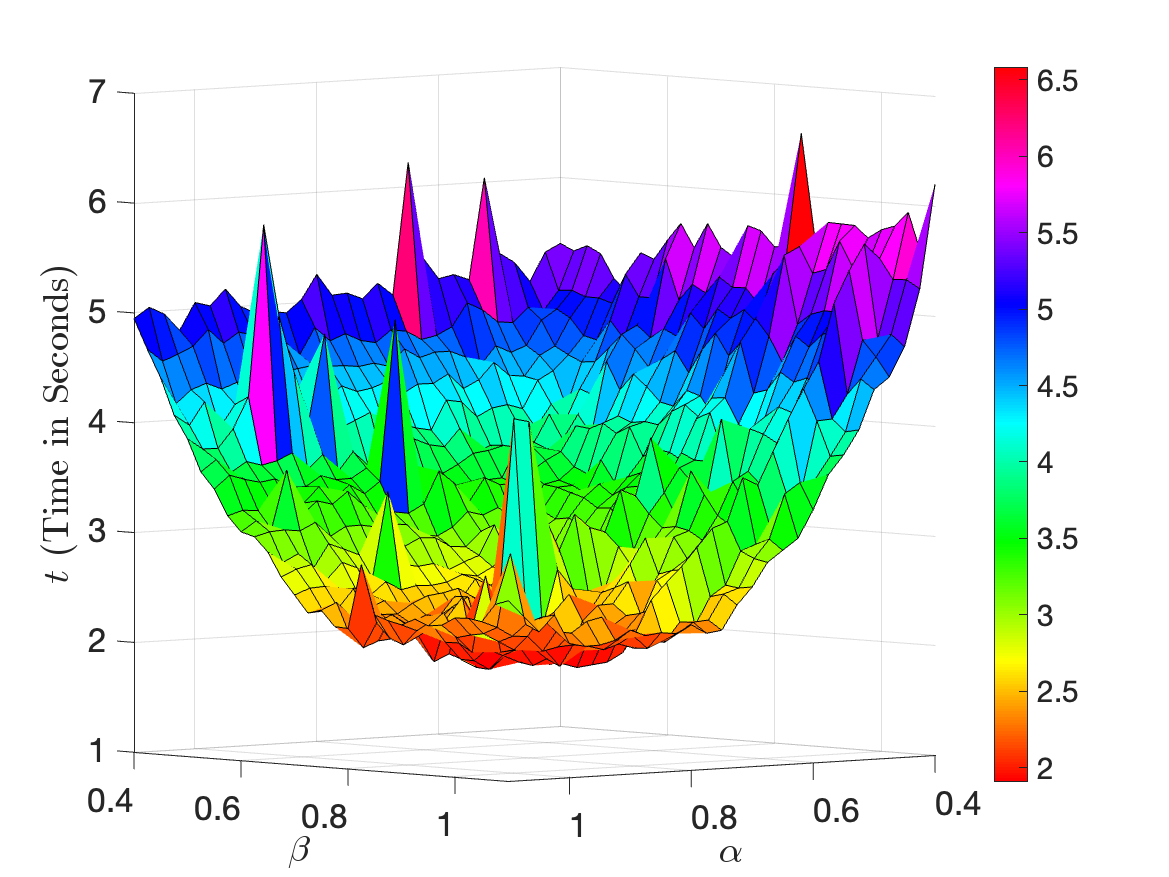}}
\subfigure[HOJ-TSPI ($n=150$)]{\includegraphics[scale=0.4]{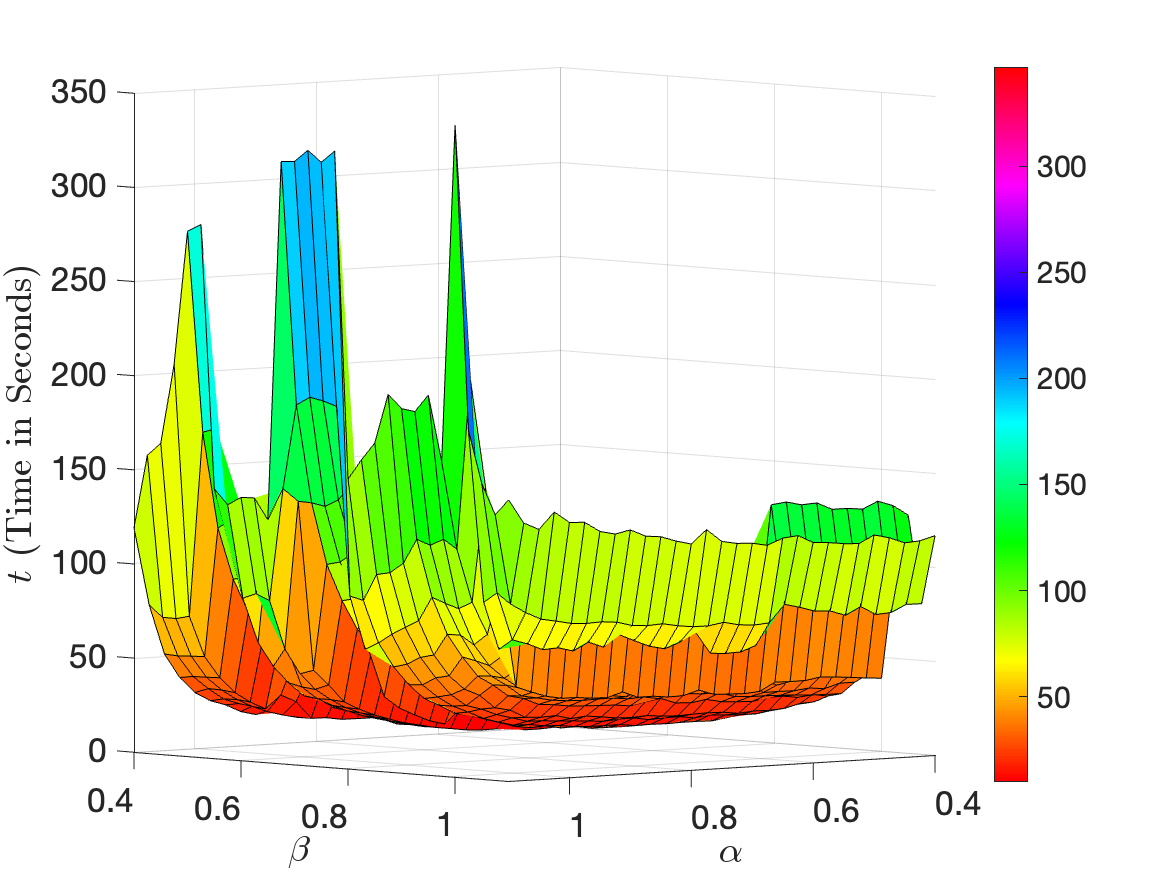}}
\subfigure[HOGS-TSPI ($n=150$)]{\includegraphics[scale=0.4]{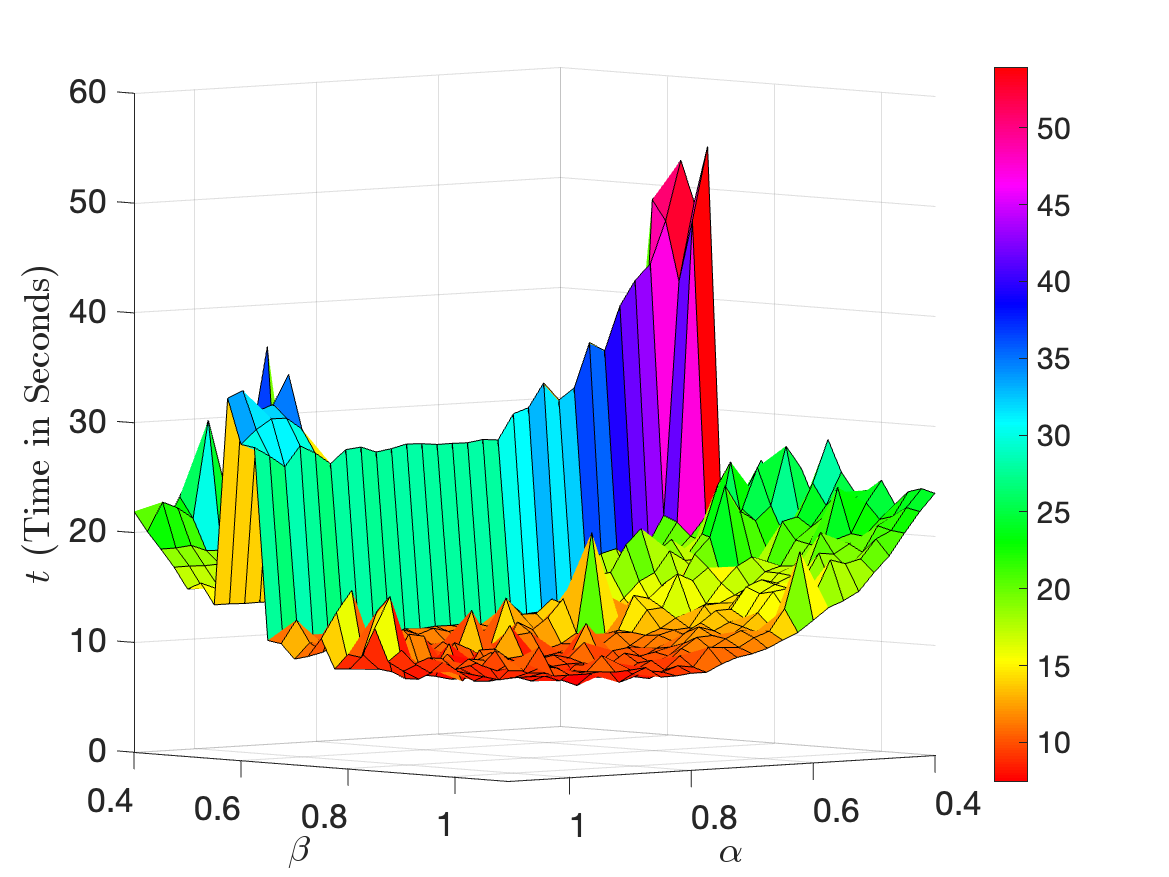}}
\caption{A comparative analysis of the average CPU time for higher-order TSPI methods  focusing on fixed tensor sizes of  $n\times n\times  n$ ( where $\mc{A},~\mc{B},~\mc{C}\in\mathbb{C}^{n\times n\times n}$ ) with varying both parameters $\alpha$ and $\beta$}
\label{Fig3d_alpha_beta_comp}
\end{figure}

\begin{theorem}
Consider $M\in\mbc^{p\times p}$, $\mc{A}$, $\mc{B}$,  $\mc{C}$ as defined in \eqref{maineq}. Let $0<\alpha<\frac{2}{1+\rho\left(\mathscr{F}_1^{-1} \m\mathscr{G}_1\right)}, ~0<\beta<\frac{2}{1+\rho\left(\mathscr{F}_2^{-1}\m \mathscr{G}_2\right)}$.  If $\mc{A}$ and $\mc{B}$ are strictly diagonally dominant tensors then the HOSOR-TSPI method converges
   for $0<\omega_1, \omega_2 \leq 1$.
\end{theorem}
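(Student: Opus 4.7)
The plan is to reduce the statement to Theorem \ref{thm-cgt} by verifying its two spectral-radius hypotheses for the particular splittings induced by the HOSOR-TSPI choice $\omega_i=\kappa_i$. Specialising \eqref{eq11} with $\omega_1=\kappa_1$ gives
\[
\mathscr{F}_1=\tfrac{1}{\omega_1}\mc{D}_1+\mc{L}_1,\qquad \mathscr{G}_1=\tfrac{1-\omega_1}{\omega_1}\mc{D}_1-\mc{U}_1,
\]
and analogously $(\mathscr{F}_2,\mathscr{G}_2)$ for $\mc{B}$. So the iteration tensors $\mathscr{F}_i^{-1}\m\mathscr{G}_i$ are the natural $M$-product SOR iteration tensors associated to $\mc{A}$ and $\mc{B}$, and the whole task reduces to proving $\rho(\mathscr{F}_i^{-1}\m\mathscr{G}_i)<1$ for $i=1,2$ whenever $0<\omega_i\le 1$.

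The key structural observation, which I would prove first, is that the triangular splitting is preserved under the hat transform. Since $\mc{D}_1(:,:,k)$, $\mc{L}_1(:,:,k)$, $\mc{U}_1(:,:,k)$ are diagonal, strictly lower triangular and strictly upper triangular respectively, and $\widetilde{\mc{X}}(:,:,i)=\sum_k M(i,k)\mc{X}(:,:,k)$ is a linear combination of the frontal slices, the matrices $\widetilde{\mc{D}_1}(:,:,i)$, $\widetilde{\mc{L}_1}(:,:,i)$, $\widetilde{\mc{U}_1}(:,:,i)$ are again diagonal, strictly lower and strictly upper triangular, and by linearity their sum equals $\tilde{\mc{A}}(:,:,i)$. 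Hence $\tilde{\mathscr{F}}_1(:,:,i)^{-1}\tilde{\mathscr{G}}_1(:,:,i)$ is \emph{exactly} the classical SOR iteration matrix of $\tilde{\mc{A}}(:,:,i)$ with relaxation parameter $\omega_1$.

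Next, using Proposition \ref{prop-basic} (which sends $M$-products to slice-wise face products under the hat transform) together with the definition $\rho(\mc{T})=\max_{1\le i\le p}\rho(\tilde{\mc{T}}(:,:,i))$, I obtain
\[
\rho(\mathscr{F}_1^{-1}\m\mathscr{G}_1)=\max_{1\le i\le p}\rho\!\left(\tilde{\mathscr{F}}_1(:,:,i)^{-1}\tilde{\mathscr{G}}_1(:,:,i)\right).
\]
By the standing assumption $\tilde{\mc{A}}(:,:,i)$ is strictly diagonally dominant for every $i$, so the classical Ostrowski--Reich / Varga SOR convergence theorem for strictly diagonally dominant matrices yields $\rho(\tilde{\mathscr{F}}_1(:,:,i)^{-1}\tilde{\mathscr{G}}_1(:,:,i))<1$ whenever $0<\omega_1\le 1$. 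Taking the max over $i$ gives $\rho(\mathscr{F}_1^{-1}\m\mathscr{G}_1)<1$, and the identical argument applied to $\mc{B}$ yields $\rho(\mathscr{F}_2^{-1}\m\mathscr{G}_2)<1$. The hypothesis on $\alpha,\beta$ in the statement is already the one required by Theorem \ref{thm-cgt}, so a direct application of that theorem delivers the convergence of $\{\mc{X}_k\}$ to $\mc{A}^{-1}\m\mc{C}\m\mc{B}^{-1}$.

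The main obstacle, although not deep, is the bookkeeping in the first step: one must check carefully that taking frontal combinations via $M$ commutes with extracting the diagonal, strictly lower and strictly upper parts, so that the tensor-level splitting coincides slice-by-slice with the matrix-level SOR splitting of $\tilde{\mc{A}}(:,:,i)$. Once that identification is in place, the rest is merely a transfer of the standard scalar SOR convergence result through the $M$-product spectral-radius formula, together with a verbatim application of Theorem \ref{thm-cgt}.
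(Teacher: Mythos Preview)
Your proposal is correct and follows exactly the approach the paper intends: the paper states this theorem without proof, remarking only that it is a ``special case of Theorem~\ref{thm-cgt}'', and your argument supplies precisely the missing verification that $\rho(\mathscr{F}_i^{-1}\m\mathscr{G}_i)<1$ by passing to frontal slices under the hat transform and invoking the classical SOR convergence result for strictly diagonally dominant matrices. The bookkeeping step you flag (that $\times_3 M$ commutes with extracting diagonal/strictly-triangular parts, so the tensor splitting coincides slice-wise with the matrix SOR splitting of $\tilde{\mc{A}}(:,:,i)$) is indeed the only nontrivial detail, and your treatment of it is sound.
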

\begin{theorem}
   Consider $M\in\mbc^{p\times p}$, $\mc{A}$, $\mc{B}$,  $\mc{C}$ as defined in \eqref{maineq}. Let $0<\alpha<\frac{2}{1+\rho\left(\mathscr{F}_1^{-1} \m\mathscr{G}_1\right)}, ~0<\beta<\frac{2}{1+\rho\left(\mathscr{F}_2^{-1}\m \mathscr{G}_2\right)}$.  If $\mc{A}$ and $\mc{B}$  are Hermitian positive definite tensors, then the HOSOR-TSPI method converges for $0<\omega_1, \omega_2<2.$
\end{theorem}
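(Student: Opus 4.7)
The plan is to reduce the HOSOR-TSPI convergence claim to a face-wise application of the classical Ostrowski--Reich theorem for matrix SOR methods. The key observation is that Theorem~\ref{thm-cgt} has already done most of the work: under the hypothesis $\rho(\mathscr{F}_1^{-1}\m\mathscr{G}_1)<1$ and $\rho(\mathscr{F}_2^{-1}\m\mathscr{G}_2)<1$ together with the stated ranges on $\alpha,\beta$, the two-step iteration converges to $\mc{A}^{-1}\m\mc{C}\m\mc{B}^{-1}$. So it suffices to verify that each SOR-type splitting has $\m$-spectral radius strictly less than $1$ for every admissible $\omega_i\in(0,2)$.

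First, I would substitute $\kappa_i=\omega_i$ into \eqref{eq11} to collapse the general AOR splitting to the familiar SOR form
\[
\mathscr{F}_i=\tfrac{1}{\omega_i}\mc{D}_i+\mc{L}_i,\qquad \mathscr{G}_i=\tfrac{1-\omega_i}{\omega_i}\mc{D}_i-\mc{U}_i,\qquad i=1,2.
\]
Next, combining Proposition~\ref{prop-basic}(i),(iv) with the definition $\rho(\mc{T})=\max_{1\le k\le p}\rho(\tilde{\mc{T}}(:,:,k))$, the $\m$-spectral radius decouples across frontal slices of the transformed tensor:
\[
\rho\bigl(\mathscr{F}_i^{-1}\m\mathscr{G}_i\bigr)=\max_{1\le k\le p}\rho\!\left([\tilde{\mathscr{F}}_i(:,:,k)]^{-1}\tilde{\mathscr{G}}_i(:,:,k)\right).
\]

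Now for each fixed $k$, the slice $\tilde{\mc{A}}(:,:,k)$ is HPD by assumption, and $\tilde{\mathscr{F}}_1(:,:,k)$ together with $\tilde{\mathscr{G}}_1(:,:,k)$ form the standard matrix SOR splitting of $\tilde{\mc{A}}(:,:,k)$ with relaxation parameter $\omega_1$. The classical Ostrowski--Reich theorem then gives $\rho([\tilde{\mathscr{F}}_1(:,:,k)]^{-1}\tilde{\mathscr{G}}_1(:,:,k))<1$ for every $0<\omega_1<2$. The same argument, now applied to the HPD slices of $\mc{B}$, handles $\mathscr{F}_2,\mathscr{G}_2$ for $0<\omega_2<2$. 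Taking the maximum over $k$ delivers $\rho(\mathscr{F}_i^{-1}\m\mathscr{G}_i)<1$ for $i=1,2$, and Theorem~\ref{thm-cgt} then closes the argument.

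The main obstacle is ensuring that the face-wise SOR structure is genuinely preserved under the $\times_3 M$ transform. The splitting blocks $\mc{D}_i,\mc{L}_i,\mc{U}_i$ are prescribed at the level of $\mc{A}$, whereas the HPD hypothesis is phrased on $\tilde{\mc{A}}(:,:,k)$. So one must check that $\tilde{\mathscr{F}}_i(:,:,k)$ and $\tilde{\mathscr{G}}_i(:,:,k)$ really give the classical SOR decomposition of $\tilde{\mc{A}}(:,:,k)$---either by interpreting $\mc{D}_i,\mc{L}_i,\mc{U}_i$ directly via the triangular parts of $\tilde{\mc{A}}(:,:,k)$ and pulling back through $M^{-1}$, or by observing that the splitting formulas in \eqref{eq11} commute with $\times_3 M$ when applied slice-wise. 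Once this compatibility is resolved, everything else is a routine bookkeeping reduction to the classical matrix result.
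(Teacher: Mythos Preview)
Your proposal is correct and matches the paper's (implicit) approach: the paper states this theorem without proof, introducing it among results that ``are special cases of Theorem~\ref{thm-cgt},'' so the intended argument is precisely to verify $\rho(\mathscr{F}_i^{-1}\m\mathscr{G}_i)<1$ slice-wise via the classical Ostrowski--Reich theorem and then invoke Theorem~\ref{thm-cgt}. On the obstacle you flag, the resolution is immediate: the maps $A\mapsto\mathrm{diag}(A)$ and $A\mapsto\mathrm{strictlower}(A)$ are linear projections on matrix space, so they commute with the mode-$3$ product $\times_3 M$; hence $\tilde{\mc{D}}_i(:,:,k)$, $\tilde{\mc{L}}_i(:,:,k)$, $\tilde{\mc{U}}_i(:,:,k)$ are exactly the diagonal and strictly triangular parts of $\tilde{\mc{A}}(:,:,k)$ (resp.\ $\tilde{\mc{B}}(:,:,k)$), and the face-wise SOR structure is preserved as you need.
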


\begin{corollary}
 Consider $M\in\mbc^{p\times p}$, $\mc{A}$, $\mc{B}$,  $\mc{C}$ as defined in \eqref{maineq}. Let $0<\alpha<\frac{2}{1+\rho\left(\mathscr{F}_1^{-1} \m\mathscr{G}_1\right)}, ~0<\beta<\frac{2}{1+\rho\left(\mathscr{F}_2^{-1}\m \mathscr{G}_2\right)}$.  If $\mc{A}$ and $\mc{B}$  are Hermitian positive definite tensors then the HOGS-TSPI method is convergent.
\end{corollary}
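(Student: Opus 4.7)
The plan is to derive this corollary as an immediate specialization of the preceding HOSOR-TSPI convergence theorem, rather than redoing the splitting analysis from scratch. Looking at Table \ref{tab:aortspi}, HOGS-TSPI corresponds to the choice $\omega_1=\omega_2=1$ together with $\kappa_1=\kappa_2=1$. Since the HOSOR-TSPI family is defined by $\omega_1=\kappa_1$ and $\omega_2=\kappa_2$, the pair $(\omega_1,\kappa_1)=(\omega_2,\kappa_2)=(1,1)$ is a legitimate instance of HOSOR-TSPI; in other words, HOGS-TSPI is HOSOR-TSPI evaluated at $\omega_1=\omega_2=1$.

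With that observation in hand, I would verify that the parameter choice satisfies the hypotheses of the prior theorem. The theorem requires $0<\omega_1,\omega_2<2$, and trivially $0<1<2$. The hypotheses on $\mc{A}$ and $\mc{B}$ (Hermitian positive definite) and on $\alpha,\beta$ (lying in $\left(0,\tfrac{2}{1+\rho(\mathscr{F}_i^{-1}\m\mathscr{G}_i)}\right)$) transfer verbatim, since the splitting tensors $\mathscr{F}_i,\mathscr{G}_i$ defined in \eqref{eq11} are exactly the HOGS-TSPI splittings when $\omega_i=\kappa_i=1$. Therefore the HOSOR-TSPI theorem applies and yields convergence of $\{\mc{X}_k\}$ to $\mc{A}^{-1}\m\mc{C}\m\mc{B}^{-1}$.

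There is essentially no obstacle here: the whole content of the corollary is a bookkeeping observation that the HOGS-TSPI scheme is obtained from HOSOR-TSPI by freezing two parameters at the value $1$. The only minor point worth writing out carefully is the identification of the splittings from \eqref{eq11} at $\omega_i=\kappa_i=1$, namely $\mathscr{F}_i=\mc{D}_i+\mc{L}_i$ and $\mathscr{G}_i=-\mc{U}_i$, so that the quantities $\rho(\mathscr{F}_i^{-1}\m\mathscr{G}_i)$ appearing in the bounds on $\alpha,\beta$ in the corollary match those of the parent theorem. Once this identification is noted, the conclusion is immediate from the preceding theorem, so the proof can be stated in a single short paragraph.
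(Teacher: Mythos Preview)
Your proposal is correct and matches the paper's approach: the corollary is stated immediately after the HOSOR-TSPI theorem for Hermitian positive definite tensors with no separate proof, precisely because HOGS-TSPI is the special case $\omega_1=\kappa_1=\omega_2=\kappa_2=1$, which lies in the admissible range $0<\omega_i<2$. Your identification of the splittings $\mathscr{F}_i=\mc{D}_i+\mc{L}_i$, $\mathscr{G}_i=-\mc{U}_i$ and the observation that the bounds on $\alpha,\beta$ transfer verbatim are exactly the bookkeeping the paper leaves implicit.
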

Next, we discuss the nonnengative splittings of tensors under $M$-product.
\begin{definition}\label{def-reg}
   Consider $M\in\mbc^{p\times p}$ and $\mc{A}\in\mbc^{m\times m\times p}$ to be nonsingular. The splitting $\mc{A}=\mc{F}-\mc{G}$ is called a
   \begin{enumerate}
       \item[(i)] regular splitting of $\mc{A}$  if $\mc{F}^{-1}\geq 0$ and $\mc{G}\geq 0$.
       \item[(ii)] weak regular splitting of $\mc{A}$  if $\mc{F}^{-1}\geq 0$ and $\mc{F}^{-1}\m\mc{G}\geq 0$.
         \item[(iii)] nonnegative splitting of $\mc{A}$  if $\mc{F}^{-1}\m\mc{G}\geq 0$.
   \end{enumerate}
\end{definition}

\begin{theorem}\label{thm-non}
Let $M\in\mbc^{p\times p}$ and $\mc{A}\in\mbc^{m\times m\times p}$ be nonsingular. Consider $\mc{A}=\mc{F}-\mc{G}$ as a nonnegative (or weak) splitting of $\mc{A}$,  and  $\mc{T}=\mc{A}^{-1}\m\mc{F}$. Then $\mc{T}\geq 0$ if and only if $\rho(\mc{F}^{-1}\m\mc{G})<1$.
\end{theorem}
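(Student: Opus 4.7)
The plan is to reduce the tensor statement to the analogous matrix results (Lemma \ref{lem-non} for nonnegative splittings and Lemma \ref{lem-weak} for weak regular splittings) by passing to the $3$-mode transformed (``hat'') domain and working frontal slice by frontal slice. The crucial observation that makes this reduction clean is that all three relevant notions in the theorem, namely nonnegativity, the splitting condition, and the spectral radius, are each defined slicewise on the tilde tensors.

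First I would unpack the hypotheses. By definition $\mc{F}^{-1}\m\mc{G}\geq 0$ means $\wtilde{\mc{F}^{-1}\m\mc{G}}\geq 0$ entrywise, and Proposition \ref{prop-basic}(i) together with Proposition \ref{prop-basic}(iv) gives $\wtilde{\mc{F}^{-1}\m\mc{G}}=\tilde{\mc{F}}^{-1}_{\Delta}\Delta\tilde{\mc{G}}$, so the condition becomes $\bigl(\tilde{\mc{F}}(:,:,i)\bigr)^{-1}\tilde{\mc{G}}(:,:,i)\geq 0$ for each $i=1,\ldots,p$. Applying the same reasoning to $\mc{A}=\mc{F}-\mc{G}$ (and using Proposition \ref{prop-basic}(iii)) yields $\tilde{\mc{A}}(:,:,i)=\tilde{\mc{F}}(:,:,i)-\tilde{\mc{G}}(:,:,i)$, so the splitting descends to a matrix nonnegative splitting on every frontal slice of $\tilde{\mc{A}}$; in the weak regular case, the extra condition $\mc{F}^{-1}\geq 0$ likewise descends to $\bigl(\tilde{\mc{F}}(:,:,i)\bigr)^{-1}\geq 0$ for all $i$.

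Next I would translate the conclusion slicewise. The tensor $\mc{T}=\mc{A}^{-1}\m\mc{F}$ satisfies $\tilde{\mc{T}}(:,:,i)=\bigl(\tilde{\mc{A}}(:,:,i)\bigr)^{-1}\tilde{\mc{F}}(:,:,i)$, and by definition $\mc{T}\geq 0$ is equivalent to $\tilde{\mc{T}}(:,:,i)\geq 0$ for every $i$. On the spectral side, the definition of the $M$-product spectral radius and Proposition \ref{prop-basic} give
\[
\rho(\mc{F}^{-1}\m\mc{G})=\max_{1\leq i\leq p}\rho\!\left(\bigl(\tilde{\mc{F}}(:,:,i)\bigr)^{-1}\tilde{\mc{G}}(:,:,i)\right),
\]
so $\rho(\mc{F}^{-1}\m\mc{G})<1$ is equivalent to the corresponding bound holding on each slice simultaneously.

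Finally, I would invoke Lemma \ref{lem-non} (respectively Lemma \ref{lem-weak}) on each frontal slice: for each $i$, $\tilde{\mc{T}}(:,:,i)\geq 0$ if and only if $\rho\!\bigl((\tilde{\mc{F}}(:,:,i))^{-1}\tilde{\mc{G}}(:,:,i)\bigr)<1$. Since the slicewise conditions are equivalent to the tensorial ones by the previous two steps, combining them gives $\mc{T}\geq 0$ iff $\rho(\mc{F}^{-1}\m\mc{G})<1$. The main (mild) obstacle is being careful with the dictionary between $\m$ and $\Delta$ when moving quantities inside and outside the tilde, but Proposition \ref{prop-basic} records every identity needed and the argument then reduces to the already-established matrix lemmas.
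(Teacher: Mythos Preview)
Your proposal is correct and follows essentially the same slicewise reduction as the paper's proof: pass to the tilde domain via Proposition~\ref{prop-basic}, check that the splitting and the tensors $\mc{T}$ and $\mc{F}^{-1}\m\mc{G}$ descend to each frontal slice, and then apply the matrix lemma on every slice. One small remark: since a weak regular splitting is in particular a nonnegative splitting, Lemma~\ref{lem-non} already handles both cases and is the only lemma the paper invokes here (Lemma~\ref{lem-weak} has the different conclusion $A^{-1}\geq 0$ and is used instead for Corollary~\ref{thm-weak}).
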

\begin{proof}
Let  $\mc{A}=\mc{F}-\mc{G}$ be a nonnegative splitting of $\mc{A}$. Then from Definition \ref{def-reg}, we have $(\wtilde{\mc{F}^{-1}\m\mc{G}})(:,:,i)\geq 0$. Using Proposition \ref{prop-basic} (i) and (iv) with $\mc{B}=\mc{I}_{mmp}$, we obtain
   \begin{equation}\label{eq13}
\wtilde{\mc{F}^{-1}\m\mc{G}}=\wtilde{\mc{F}^{-1}}\Delta\tilde{\mc{G}}=(\tilde{\mc{F}})^{-1}\Delta\tilde{\mc{G}}.  
   \end{equation}
Clearly $\tilde{\mc{A}}=\tilde{\mc{F}}-\tilde{\mc{G}}$ and from equation \eqref{eq13}, it follows that $\tilde{\mc{A}}(:,:,i)=\tilde{\mc{F}}(:,:,i)-\tilde{\mc{G}}(:,:,i)$ is a nonnegative splitting of $\tilde{\mc{A}}(:,:,i)$ for each $i$, $i=1,2,\ldots, p$. Again applying Proposition \ref{prop-basic}, we have $\tilde{\mc{T}}=\wtilde{\mc{A}^{-1}\m\mc{F}}=\wtilde{\mc{A}^{-1}}\Delta\tilde{\mc{F}}=(\tilde{\mc{A}})^{-1}\Delta\tilde{\mc{F}}$. Thus $\mc{T}\geq 0$ if and only if $(\tilde{\mc{A}})^{-1}(;,:,i)\tilde{\mc{F}}(:,:,i)\geq 0$ for all $i=1,2,\ldots,p$. By Lemma \ref{lem-non}, $(\tilde{\mc{A}})^{-1}(;,:,i)\tilde{\mc{F}}(:,:,i)\geq 0$ if only if $\rho\left((\tilde{\mc{F}})^{-1}(:,:,i)\tilde{\mc{G}}(:,:,i)\right)<1$ for all $i=1,2,\ldots,p$. Which is equivalently, $\mc{T}\geq 0$ if and only if $\rho(\mc{F}^{-1}\m\mc{G})<1$.
\end{proof}
In the similar manner by using Lemma \ref{lem-reg} and Lemma \ref{lem-weak}, we can prove the following two results. 
\begin{corollary}\label{thm-weak}
  Consider $M\in\mbc^{p\times p}$ and $\mc{A}\in\mbc^{m\times m\times p}$ to be nonsingular. Consider $\mc{A}=\mc{F}-\mc{G}$ as a weak regular splitting of $\mc{A}$. Then $\mc{A}^{-1}\geq 0$ if and only if $\rho(\mc{F}^{-1}\m\mc{G})<1$.
\end{corollary}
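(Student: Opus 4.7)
The plan is to mirror the proof of Theorem \ref{thm-non} almost verbatim, substituting Lemma \ref{lem-weak} for Lemma \ref{lem-non} at the decisive step. The core idea is that, under the $M$-product framework, a splitting property of $\mc{A}=\mc{F}-\mc{G}$ decouples across the frontal slices of the hatted tensors, so the result reduces to a face-wise application of the matrix version.

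First I would translate the weak regular splitting condition into a face-wise statement. Using Proposition \ref{prop-basic}(i) and (iv) with $\mc{D}=\mc{F}$, $\mc{C}=\mc{F}^{-1}$ one sees $\wtilde{\mc{F}^{-1}}=(\tilde{\mc{F}})^{-1}$, and similarly $\wtilde{\mc{F}^{-1}\m\mc{G}}=(\tilde{\mc{F}})^{-1}\Delta\tilde{\mc{G}}$. Thus the hypotheses $\mc{F}^{-1}\geq 0$ and $\mc{F}^{-1}\m\mc{G}\geq 0$ translate to $(\tilde{\mc{F}})^{-1}(:,:,i)\geq 0$ and $(\tilde{\mc{F}})^{-1}(:,:,i)\tilde{\mc{G}}(:,:,i)\geq 0$ for each $i=1,2,\ldots,p$, which is exactly the condition that $\tilde{\mc{A}}(:,:,i)=\tilde{\mc{F}}(:,:,i)-\tilde{\mc{G}}(:,:,i)$ is a weak regular splitting of the matrix $\tilde{\mc{A}}(:,:,i)$ in the classical sense.

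Next I would rewrite the two sides of the desired equivalence face-wise. The condition $\mc{A}^{-1}\geq 0$ is, by the nonnegativity definition applied to $\mc{A}^{-1}$ and again by Proposition \ref{prop-basic}, equivalent to $(\tilde{\mc{A}})^{-1}(:,:,i)\geq 0$ for every $i$. The spectral radius identity recalled in the preliminaries gives $\rho(\mc{F}^{-1}\m\mc{G})=\max_{1\leq i\leq p}\rho\bigl((\tilde{\mc{F}})^{-1}(:,:,i)\tilde{\mc{G}}(:,:,i)\bigr)$, so $\rho(\mc{F}^{-1}\m\mc{G})<1$ if and only if $\rho\bigl((\tilde{\mc{F}})^{-1}(:,:,i)\tilde{\mc{G}}(:,:,i)\bigr)<1$ for every $i$. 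Applying Lemma \ref{lem-weak} slice by slice then closes the equivalence in both directions simultaneously.

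No step here is a serious obstacle, since the strategy is already validated in Theorem \ref{thm-non}; the main thing to get right is the bookkeeping of the tilde/face-product identities, in particular the fact that $\wtilde{\mc{F}^{-1}}=(\tilde{\mc{F}})^{-1}$ (so that the nonnegativity of $\mc{F}^{-1}$ in the $M$-product sense really is the face-wise nonnegativity of $(\tilde{\mc{F}})^{-1}$) and the analogous identity for $\mc{F}^{-1}\m\mc{G}$. Once these are in place, the conclusion is immediate.
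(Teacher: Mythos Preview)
Your proposal is correct and follows precisely the route indicated by the paper, which states that Corollary~\ref{thm-weak} is proved ``in the similar manner'' as Theorem~\ref{thm-non} by substituting Lemma~\ref{lem-weak} for Lemma~\ref{lem-non}. The face-wise decoupling via Proposition~\ref{prop-basic} and the slice-by-slice application of the matrix result is exactly what the paper has in mind.
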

\begin{corollary}\label{cor-reg}
    Consider $M\in\mbc^{p\times p}$ and $\mc{A}\in\mbc^{m\times m\times p}$ to be nonsingular. Consider $\mc{A}=\mc{F}-\mc{G}$ as a regular splitting of $\mc{A}$. Then $\mc{A}^{-1}\geq 0$ if and only if $\rho(\mc{F}^{-1}\m\mc{G})<1$. 
\end{corollary}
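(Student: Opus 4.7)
The plan is to mirror the strategy used in the proof of Theorem \ref{thm-non}, namely reduce the tensor statement under the $M$-product to a collection of frontal slice statements in the transformed domain, and then invoke the matrix-level result for regular splittings, Lemma \ref{lem-reg}. The key idea is that the $\tilde{\cdot}$ operation behaves multiplicatively under $\m$ (Proposition \ref{prop-basic}(i),(iv),(v)) and that both nonnegativity and the spectral radius are defined sliceways on $\tilde{\cdot}$.

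First, I would verify that the splitting lifts sliceways. From $\mc{A}=\mc{F}-\mc{G}$ and Proposition \ref{prop-basic}(iii), one obtains $\tilde{\mc{A}}=\tilde{\mc{F}}-\tilde{\mc{G}}$, hence $\tilde{\mc{A}}(:,:,i)=\tilde{\mc{F}}(:,:,i)-\tilde{\mc{G}}(:,:,i)$ for every $i$. Next, I would translate the regularity hypothesis: $\mc{F}^{-1}\ge 0$ means, by definition, $\wtilde{\mc{F}^{-1}}(:,:,i)\ge 0$, and by Proposition \ref{prop-basic}(iv) with $\mc{B}=\mc{I}_{mmp}$, $\wtilde{\mc{F}^{-1}}=(\tilde{\mc{F}})^{-1}$, so $(\tilde{\mc{F}}(:,:,i))^{-1}\ge 0$. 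Similarly, $\mc{G}\ge 0$ translates to $\tilde{\mc{G}}(:,:,i)\ge 0$. Consequently, $\tilde{\mc{A}}(:,:,i)=\tilde{\mc{F}}(:,:,i)-\tilde{\mc{G}}(:,:,i)$ is a regular matrix splitting of $\tilde{\mc{A}}(:,:,i)$ for each $i$.

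Now I would apply Lemma \ref{lem-reg} sliceways: for each $i$, $(\tilde{\mc{A}}(:,:,i))^{-1}\ge 0$ if and only if $\rho\bigl((\tilde{\mc{F}}(:,:,i))^{-1}\tilde{\mc{G}}(:,:,i)\bigr)<1$. Using Proposition \ref{prop-basic}(iv) again, $(\tilde{\mc{A}}(:,:,i))^{-1}=\wtilde{\mc{A}^{-1}}(:,:,i)$, so the left-hand condition sliceways is exactly $\mc{A}^{-1}\ge 0$. For the right-hand condition, I would use Proposition \ref{prop-basic}(i),(iv) to write $\wtilde{\mc{F}^{-1}\m\mc{G}}=(\tilde{\mc{F}})^{-1}\Delta\tilde{\mc{G}}$ and then recall the definition of the spectral radius under $\m$, which gives
\[
\rho(\mc{F}^{-1}\m\mc{G})=\max_{1\le i\le p}\rho\bigl((\tilde{\mc{F}}(:,:,i))^{-1}\tilde{\mc{G}}(:,:,i)\bigr).
\]
Taking the maximum/and-over-$i$ then yields the equivalence claimed.

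I do not expect a serious obstacle; this is essentially a bookkeeping exercise that parallels Theorem \ref{thm-non} almost line for line, the only change being that Lemma \ref{lem-non} is replaced by Lemma \ref{lem-reg} and the hypothesis $\mc{F}^{-1}\m\mc{G}\ge 0$ is replaced by the stronger pair $\mc{F}^{-1}\ge 0$ and $\mc{G}\ge 0$. The only mildly delicate point is making sure the ``$\ge 0$'' convention in the tensor setting (defined through the $\tilde{\cdot}$ transform) is consistently pushed through each use of Proposition \ref{prop-basic}, so that the slicewise inequalities are indeed equivalent to the corresponding tensor inequalities; once this dictionary is set up, the proof reduces to invoking Lemma \ref{lem-reg} on each of the $p$ frontal slices and observing that both conditions in the equivalence are characterized by ``for every $i$''.
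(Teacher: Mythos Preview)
Your proposal is correct and follows exactly the approach the paper indicates: it states that the corollary is proved ``in the similar manner'' as Theorem~\ref{thm-non}, replacing Lemma~\ref{lem-non} by Lemma~\ref{lem-reg}. Your slicewise reduction via Proposition~\ref{prop-basic} and the definitions of nonnegativity and spectral radius is precisely that argument spelled out in detail.
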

In view of Theorem \ref{thm-cgt} and Theorem \ref{thm-non}, we obtain the following result.
\begin{theorem}\label{thm-non-para}
Consider $M\in\mbc^{p\times p}$, $\mc{A}$, $\mc{B}$,  $\mc{C}$ as defined in \eqref{maineq}. Let $\mc{A}=\mc{F}_1-\mc{G}_1$ be a nonnegative splitting of $\mc{A}$ and  $\mc{B}=\mc{F}_2-\mc{G}_2$ be a nonnegative  splitting of $\mc{A}$. Consider $\left\{\mc {X}_k\right\}$, $\left\{\mc {Y}_k\right\}$  as defined in \eqref{tpit}, $0<\alpha<\frac{2}{1+\rho(\mc{F}_{1}^{-1}\m\mc{G}_1)}$ and $0<\beta<\frac{2}{1+\rho(\mc{F}_{2}^{-1}\m\mc{G}_2)}$.  If $\mc{A}^{-1}\m\mc{F}_1\geq 0$ and $\mc{B}^{-1}\m\mc{F}_2\geq 0$ then   $\left\{\mc{X}_k\right\}$  converges  to the exact solution $\mc{A}^{-1}\m\mc{C}\m\mc{B}^{-1}$ for any initial tensor $\mc{X}_0$ with $\mc{Y}_0=\mc{X}_0\m\mc{B}$.
\end{theorem}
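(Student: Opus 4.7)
The plan is to deduce this theorem as an immediate consequence of two earlier results: Theorem \ref{thm-non}, which characterizes when $\rho(\mc{F}^{-1}\m\mc{G})<1$ for a nonnegative splitting, and Theorem \ref{thm-cgt}, which gives the convergence of the two-step parameterized iterative scheme under spectral radius hypotheses. The bridge between the two is the observation that the present theorem's assumptions $\mc{A}^{-1}\m\mc{F}_1\geq 0$ and $\mc{B}^{-1}\m\mc{F}_2\geq 0$ are exactly the nonnegativity conditions $\mc{T}\geq 0$ appearing in Theorem \ref{thm-non} (with $\mc{T}=\mc{A}^{-1}\m\mc{F}$).

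First I would apply Theorem \ref{thm-non} to the nonnegative splitting $\mc{A}=\mc{F}_1-\mc{G}_1$: since $\mc{A}^{-1}\m\mc{F}_1\geq 0$ by hypothesis, the theorem forces $\rho(\mc{F}_1^{-1}\m\mc{G}_1)<1$. In parallel, applying the same theorem to $\mc{B}=\mc{F}_2-\mc{G}_2$ and using $\mc{B}^{-1}\m\mc{F}_2\geq 0$ gives $\rho(\mc{F}_2^{-1}\m\mc{G}_2)<1$. These two spectral bounds also guarantee that the parameter intervals prescribed in the statement, namely $0<\alpha<\tfrac{2}{1+\rho(\mc{F}_1^{-1}\m\mc{G}_1)}$ and $0<\beta<\tfrac{2}{1+\rho(\mc{F}_2^{-1}\m\mc{G}_2)}$, are nonempty and contained in $(0,1]\cup(1,2)$, exactly the range in which Theorem \ref{thm-cgt} is stated.

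Next I would invoke Theorem \ref{thm-cgt}: its hypotheses are precisely $\rho(\mc{F}_1^{-1}\m\mc{G}_1)<1$, $\rho(\mc{F}_2^{-1}\m\mc{G}_2)<1$, together with the parameter bounds on $\alpha,\beta$, and the compatibility condition $\mc{Y}_0=\mc{X}_0\m\mc{B}$. All four have been established, so Theorem \ref{thm-cgt} delivers $\mc{X}_k\to \mc{A}^{-1}\m\mc{C}\m\mc{B}^{-1}$ for any initial tensor $\mc{X}_0$, which is the desired conclusion.

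Since the result is a straightforward synthesis, there is no real obstacle; the only point requiring a bit of care is to confirm that Theorem \ref{thm-non} does apply verbatim to our splittings. The splittings $\mc{A}=\mc{F}_1-\mc{G}_1$ and $\mc{B}=\mc{F}_2-\mc{G}_2$ are assumed nonnegative, $\mc{A}$ and $\mc{B}$ are nonsingular (guaranteed by the setup of \eqref{maineq}), and the nonnegativity hypothesis is stated on $\mc{A}^{-1}\m\mc{F}_1$ rather than on $(\mc{A}^{-1}\m\mc{F}_1)^{-1}$ or another variant, matching the ``$\mc{T}=\mc{A}^{-1}\m\mc{F}$'' in Theorem \ref{thm-non}. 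Once this identification is spelled out, the proof reduces to citing the two theorems in sequence.
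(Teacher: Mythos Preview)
Your proposal is correct and matches the paper's own approach exactly: the paper simply states that the result follows ``in view of Theorem \ref{thm-cgt} and Theorem \ref{thm-non},'' and your argument spells out precisely this two-step deduction.
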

Following Theorem \ref{thm-cgt} along with Corollaries \ref{thm-weak} and \ref{cor-reg}, we obtain the following results.
\begin{theorem}\label{weak-reg-ten}
   Consider $M\in\mbc^{p\times p}$, $\mc{A}$, $\mc{B}$,  $\mc{C}$ as defined in \eqref{maineq}. Let $\mc{A}=\mc{F}_1-\mc{G}_1$ be a weak regular splitting of $\mc{A}$ and  $\mc{B}=\mc{F}_2-\mc{G}_2$ be a weak regular  splitting of $\mc{B}$. Consider $\left\{\mc {X}_k\right\}$, $\left\{\mc {Y}_k\right\}$  as defined in \eqref{tpit}, $0<\alpha<\frac{2}{1+\rho(\mc{F}_{1}^{-1}\m\mc{G}_1)}$ and $0<\beta<\frac{2}{1+\rho(\mc{F}_{2}^{-1}\m\mc{G}_2)}$.  If $\mc{A}^{-1}\geq 0$ and $\mc{B}^{-1}\geq 0$ then   $\left\{\mc{X}_k\right\}$  converges  to the exact solution $\mc{A}^{-1}\m\mc{C}\m\mc{B}^{-1}$ for any initial tensor $\mc{X}_0$ with $\mc{Y}_0=\mc{X}_0\m\mc{B}$. 
\end{theorem}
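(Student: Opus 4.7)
The plan is to reduce Theorem \ref{weak-reg-ten} to a direct combination of Corollary \ref{thm-weak} and Theorem \ref{thm-cgt}. The hypotheses are tailored precisely so that both ingredients apply: the weak regular splittings together with the nonnegativity of $\mc{A}^{-1}$ and $\mc{B}^{-1}$ supply the spectral-radius bounds needed to invoke the convergence theorem for the two-step parameterized iteration.

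First, I would apply Corollary \ref{thm-weak} to the splitting $\mc{A}=\mc{F}_1-\mc{G}_1$. Since this is a weak regular splitting and $\mc{A}^{-1}\geq 0$ by hypothesis, the corollary yields
\[
\rho(\mc{F}_1^{-1}\m\mc{G}_1)<1.
\]
Repeating the same argument for $\mc{B}=\mc{F}_2-\mc{G}_2$, using $\mc{B}^{-1}\geq 0$, gives
\[
\rho(\mc{F}_2^{-1}\m\mc{G}_2)<1.
\]
In particular, the parameter ranges $0<\alpha<\frac{2}{1+\rho(\mc{F}_{1}^{-1}\m\mc{G}_1)}$ and $0<\beta<\frac{2}{1+\rho(\mc{F}_{2}^{-1}\m\mc{G}_2)}$ are nonempty intervals strictly greater than $(0,1)$, so the hypothesis on $\alpha$ and $\beta$ is consistent.

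Second, with both spectral radii strictly less than $1$ and the parameters in the prescribed intervals, the hypotheses of Theorem \ref{thm-cgt} are exactly met. Invoking that theorem with the same splittings $\mc{A}=\mc{F}_1-\mc{G}_1$, $\mc{B}=\mc{F}_2-\mc{G}_2$, any starting tensor $\mc{X}_0$ and the companion initialization $\mc{Y}_0=\mc{X}_0\m\mc{B}$ yields that the sequence $\{\mc{X}_k\}$ generated by \eqref{tpit} converges to the exact solution $\mc{A}^{-1}\m\mc{C}\m\mc{B}^{-1}$, which is precisely the desired conclusion.

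Since both steps are essentially citations, there is no genuine technical obstacle. The only point that deserves a line of justification is verifying that the two parameter-window conditions inherited from Theorem \ref{thm-cgt} are meaningful, i.e.\ that $\rho(\mc{F}_i^{-1}\m\mc{G}_i)<1$ so the upper bounds $\tfrac{2}{1+\rho(\mc{F}_i^{-1}\m\mc{G}_i)}$ lie in $(1,2)$; this is exactly what Corollary \ref{thm-weak} supplies. Thus the proof reduces to a short two-sentence chain: weak regular splitting $+$ $\mc{A}^{-1},\mc{B}^{-1}\geq 0$ $\Rightarrow$ spectral radii $<1$ (by Corollary \ref{thm-weak}) $\Rightarrow$ convergence of \eqref{tpit} (by Theorem \ref{thm-cgt}).
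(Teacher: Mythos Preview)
Your proposal is correct and follows essentially the same route as the paper: the authors state that Theorem \ref{weak-reg-ten} is obtained by combining Theorem \ref{thm-cgt} with Corollary \ref{thm-weak}, which is exactly the two-step chain you outline.
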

\begin{corollary}
    Consider $M\in\mbc^{p\times p}$, $\mc{A}$, $\mc{B}$,  $\mc{C}$ as defined in \eqref{maineq}. Let $\mc{A}=\mc{F}_1-\mc{G}_1$ be a regular splitting of $\mc{A}$ and $\mc{B}=\mc{F}_2-\mc{G}_2$ be a regular splitting of $\mc{B}$. Consider $\left\{\mc {X}_k\right\}$, $\left\{\mc {Y}_k\right\}$  as defined in \eqref{tpit}, $0<\alpha<\frac{2}{1+\rho(\mc{F}_{1}^{-1}\m\mc{G}_1)}$ and $0<\beta<\frac{2}{1+\rho(\mc{F}_{2}^{-1}\m\mc{G}_2)}$.  If $\mc{A}^{-1}\geq 0$ and $\mc{B}^{-1}\geq 0$ then   $\left\{\mc{X}_k\right\}$  converges  to the exact solution $\mc{A}^{-1}\m\mc{C}\m\mc{B}^{-1}$ for any initial tensor $\mc{X}_0$ with $\mc{Y}_0=\mc{X}_0\m\mc{B}$. 
\end{corollary}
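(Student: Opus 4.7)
The plan is to reduce this corollary to Theorem \ref{weak-reg-ten} (equivalently, to chain Corollary \ref{cor-reg} into Theorem \ref{thm-cgt}), observing that a regular splitting is automatically a weak regular splitting in the $M$-product framework.

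\textbf{Step 1 (regular implies weak regular).} Suppose $\mc{A}=\mc{F}_1-\mc{G}_1$ is a regular splitting, so $\mc{F}_1^{-1}\geq 0$ and $\mc{G}_1\geq 0$. By Definition \ref{def-reg} it suffices to show $\mc{F}_1^{-1}\m\mc{G}_1\geq 0$. Using Proposition \ref{prop-basic}(i) (or equation \eqref{eq13}), each frontal slice of the transform satisfies
\[
\wtilde{\mc{F}_1^{-1}\m\mc{G}_1}(:,:,i)=\wtilde{\mc{F}_1^{-1}}(:,:,i)\,\tilde{\mc{G}}_1(:,:,i),
\]
which is a product of two entrywise nonnegative matrices and is therefore entrywise nonnegative. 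Hence $\mc{F}_1^{-1}\m\mc{G}_1\geq 0$, and the same argument gives $\mc{F}_2^{-1}\m\mc{G}_2\geq 0$.

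\textbf{Step 2 (apply the weak regular convergence result).} With the regular splittings now identified as weak regular splittings, and with the assumed monotonicity $\mc{A}^{-1}\geq 0$, $\mc{B}^{-1}\geq 0$, all hypotheses of Theorem \ref{weak-reg-ten} are in force: the parameter ranges on $\alpha$ and $\beta$ are inherited verbatim, and the initialization condition $\mc{Y}_0=\mc{X}_0\m\mc{B}$ is the same. Therefore Theorem \ref{weak-reg-ten} yields convergence of $\{\mc{X}_k\}$ to $\mc{A}^{-1}\m\mc{C}\m\mc{B}^{-1}$.

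\textbf{Alternative route and main obstacle.} One could instead invoke Corollary \ref{cor-reg} directly on each splitting to conclude $\rho(\mc{F}_1^{-1}\m\mc{G}_1)<1$ and $\rho(\mc{F}_2^{-1}\m\mc{G}_2)<1$, and then apply Theorem \ref{thm-cgt}; this bypasses Step 1 but is essentially equivalent. I expect no serious obstacle: the only subtlety is to justify that the $M$-product of two nonnegative tensors is nonnegative in the transform domain, which is where the slice-wise structure of the face-product is essential. Once that elementary remark is in place, the result follows by direct citation of the previously established theorems.
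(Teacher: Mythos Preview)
Your proof is correct. Your alternative route---invoking Corollary \ref{cor-reg} to obtain $\rho(\mc{F}_i^{-1}\m\mc{G}_i)<1$ and then applying Theorem \ref{thm-cgt}---is exactly the paper's intended argument (the paper states the corollary follows ``following Theorem \ref{thm-cgt} along with Corollaries \ref{thm-weak} and \ref{cor-reg}''), and your primary route through Theorem \ref{weak-reg-ten} is an equivalent repackaging of the same chain of implications.
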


\subsection{Preconditioned two-step parametrized iterative (PTSPI) method}
Notice that if $\mc{A}^{-1}\ngeq 0$, then Corollary \ref{thm-weak} and Corollary \ref{cor-reg} need not be true in general. To avoid such a problem, we can introduce preconditioned nonsingular tensors $\mc{P}_1\in\mbc^{m\times m\times p}$ and $\mc{P}_2\in\mbc^{n\times n\times p}$ such that $\mc{P}_1\m\mc{A}\geq 0$ and $\mc{B}\m\mc{P}_2\geq 0$, which guarantees convergence.  Based on the preconditioned tensors, the tensor equation \eqref{maineq} is modified to
\begin{equation}\label{eq14}
 \mc{P}_1 \m \mc{A} \m\mc{X}\m\mc{B}\m\mc{P}_2=\mc{P}_1 \m\mc{C}\m\mc{P}_2.
\end{equation}
Equation \eqref{eq14} reduced into the following two coupled tensor equations
\[
 \mc{P}_1 \m \mc{A} \m \mc{Y}=\mc{P}_1 \m\mc{C}\m\mc{P}_2 \text{~and~}
\mc{X}\m\mc{B}\m\mc{P}_2=\mc{Y} .
\] 
Let $\mc{P}_1 \m \mc{A} =\mc{F}_{p_1}-\mc{G}_{p_1} \text{~and~}  \mc{B}\m\mc{P}_2=\mc{F}_{p_2}-\mc{G}_{p_2}$, 
where $\mc{F}_{p_1}$ and $\mc{F}_{p_2}$ are nonsingular tensors. Similar to \eqref{tpit}, we obtain the following PTPSI method for solving equation \eqref{maineq}.
\begin{equation}\label{ptpit}
\left\{\begin{array}{ll}\mc{Y}_{k+1}&=\left(\mc{I}_{mmp}-\alpha\mc{F}_{p_1}^{-1}\m\mc{P}_1 \m \mc{A} \right)\m\mc{Y}_k+\alpha\mc{F}_{p_1}^{-1}\m\mc{P}_1\m\mc{C}\m\mc{P}_2\\
\mc{X}_{k+1}&=\mc{X}_k\m\left(\mc{I}_{nnp}-\beta\mc{B}\m\mc{P}_2\m\mc{F}_{p_2}^{-1} \right)+ \beta\mc{Y}_{k+1}\m\mc{F}_{p_2}^{-1} 
\end{array}
\right.    
\end{equation}
The next theorem follows directly from Theorem \ref{thm-cgt}.
\begin{theorem}\label{thm-pcgt}
Consider $M\in\mbc^{p\times p}$, $\mc{A}$, $\mc{B}$,  $\mc{C}$, $\mc{P}_1$ and $\mc{P}_2$ as defined in \eqref{eq14}. Let $\left\{\mc {X}_k\right\}$ and   $\left\{\mc {Y}_k\right\}$  be defined as in \eqref{ptpit}.  Assume that $0<\alpha<\frac{2}{1+\rho(\mc{F}_{p_1}^{-1}\m\mc{G}_{p_1})}$ and $0<\beta<\frac{2}{1+\rho(\mc{F}_{p_2}^{-1}\m\mc{G}_{p_2})}$.  If $\rho(\mc{F}_{p_1}^{-1}\m\mc{G}_{p_1})<1$ and $\rho(\mc{F}_{p_2}^{-1}\m\mc{G}_{p_2})<1$ then   $\left\{\mc{X}_k\right\}$  converges  to the exact solution 
\[(\mc{P}_1 \m \mc{A})^{-1}\m\mc{P}_1 \m\mc{C}\m\mc{P}_2\m (\mc{B}\m\mc{P}_2)^{-1}=\mc{A}^{-1}\m\mc{C}\m\mc{B}^{-1},\]
for any initial tensor $\mc{X}_0$ with $\mc{Y}_0=\mc{X}_0\m\mc{B}$.
\end{theorem}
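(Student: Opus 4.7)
The plan is to recognize this as a direct corollary of Theorem \ref{thm-cgt} applied to a reformulated tensor equation. First, I would observe that equation \eqref{eq14}, namely $\mc{P}_1 \m \mc{A} \m \mc{X} \m \mc{B} \m \mc{P}_2 = \mc{P}_1 \m \mc{C} \m \mc{P}_2$, has exactly the same structure as \eqref{maineq} once we define the augmented coefficients
\[
\hat{\mc{A}} := \mc{P}_1 \m \mc{A}, \quad \hat{\mc{B}} := \mc{B} \m \mc{P}_2, \quad \hat{\mc{C}} := \mc{P}_1 \m \mc{C} \m \mc{P}_2.
\]
By Proposition \ref{prop-basic}(v), nonsingularity of $\mc{A}, \mc{B}, \mc{P}_1, \mc{P}_2$ transfers to $\hat{\mc{A}}$ and $\hat{\mc{B}}$, so the reformulated equation is a bona fide instance of \eqref{maineq}.

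Next, I would verify by inspection that the PTPSI iteration \eqref{ptpit} is precisely the two-step parametrized iterative scheme \eqref{tpit} applied to $\hat{\mc{A}} \m \mc{X} \m \hat{\mc{B}} = \hat{\mc{C}}$ using the splittings $\hat{\mc{A}} = \mc{F}_{p_1} - \mc{G}_{p_1}$ and $\hat{\mc{B}} = \mc{F}_{p_2} - \mc{G}_{p_2}$. Indeed, substituting $\hat{\mc{A}}, \hat{\mc{B}}, \hat{\mc{C}}$ and the corresponding splittings into \eqref{tpit} recovers \eqref{ptpit} verbatim. The parameter bounds and spectral-radius hypotheses in the statement are exactly what Theorem \ref{thm-cgt} requires for the splittings of $\hat{\mc{A}}$ and $\hat{\mc{B}}$, so Theorem \ref{thm-cgt} yields convergence of $\{\mc{X}_k\}$ to $\hat{\mc{A}}^{-1} \m \hat{\mc{C}} \m \hat{\mc{B}}^{-1}$.

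Finally, I would simplify this limit to identify it with the original unknown. Applying Proposition \ref{prop-basic}(v) twice,
\[
\hat{\mc{A}}^{-1} \m \hat{\mc{C}} \m \hat{\mc{B}}^{-1} = \mc{A}^{-1} \m \mc{P}_1^{-1} \m \mc{P}_1 \m \mc{C} \m \mc{P}_2 \m \mc{P}_2^{-1} \m \mc{B}^{-1} = \mc{A}^{-1} \m \mc{C} \m \mc{B}^{-1},
\]
which is the claimed common value. There is no genuine obstacle here, since the entire argument is a reduction by substitution; the only detail requiring care is the initial-value consistency condition. Theorem \ref{thm-cgt} for the reformulated system asks that $\mc{Y}_0 = \mc{X}_0 \m \hat{\mc{B}} = \mc{X}_0 \m \mc{B} \m \mc{P}_2$, whereas the statement here records $\mc{Y}_0 = \mc{X}_0 \m \mc{B}$; I would either read the stated condition as shorthand for the natural analogue in the preconditioned setting, or note (as in the Alternative Proof of Theorem \ref{thtpit}) that the convergence conclusion in fact holds for arbitrary $\mc{Y}_0$, since mismatched initial data only contributes an additional term that still decays because $\rho(\mc{I}_{nnp} - \beta \hat{\mc{B}} \m \mc{F}_{p_2}^{-1}) < 1$.
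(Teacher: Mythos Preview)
Your proposal is correct and matches the paper's approach exactly: the paper states only that the result ``follows directly from Theorem \ref{thm-cgt}'' and gives no further argument, so your reduction by substitution (replacing $\mc{A},\mc{B},\mc{C}$ with $\mc{P}_1\m\mc{A}$, $\mc{B}\m\mc{P}_2$, $\mc{P}_1\m\mc{C}\m\mc{P}_2$ and invoking Theorem \ref{thm-cgt}) is precisely what is intended, spelled out in detail. Your observation about the initial-condition mismatch ($\mc{Y}_0=\mc{X}_0\m\mc{B}$ versus $\mc{Y}_0=\mc{X}_0\m\mc{B}\m\mc{P}_2$) is a genuine wrinkle the paper does not address; your proposed resolutions are reasonable.
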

Next, we design an algorithm to implement the preconditioned two-step parameterized iterative scheme.
\begin{algorithm}[H]
 \caption{ PTSPI method for solving $\mc{A}\m\mc{X}\m\mc{B}=\mc{C}$} \label{alg:PTSPI}
\begin{algorithmic}[1]
\State {\bf Input} $\mc{A}\in \mathbb{C}^{m \times m\times p}$, $\mc{B} \in \mathbb{C}^{n \times n\times p}$,  $\mc{C} \in \mathbb{C}^{m \times n\times p}$,   $M\in\mbc^{p\times p}$ $\alpha,~\beta$.
\State {\bf Initial guess }$\mc{X}_0 \in \mathbb{C}^{m \times n\times p}$ and $\mc{Y}_0=\mc{X}_0\m\mc{B}$
\State {\bf Preconditioned tensors }$\mc{P}_1,~\mc{P}_2$
\State {\bf Compute } Weak regular splittings of $\mc{P}_1\m\mc{A}=\mc{F}_{p_1}-\mc{G}_{p_1}$ and $\mc{B}\m\mc{P}_2=\mc{F}_{p_2}-\mc{G}_{p_2}$
\While{({true})}
\State $\mc{Y}_{k+1}=\left(\mc{I}_{mmp}-\alpha\mc{F}_{p_1}^{-1}\m\mc{P}_1 \m \mc{A} \right)\m\mc{Y}_k+\alpha\mc{F}_{p_1}^{-1}\m\mc{P}_1\m\mc{C}\m\mc{P}_2$
\State $ \mc{X}_{k+1}=\mc{X}_k\m\left(\mc{I}_{nnp}-\beta\mc{B}\m\mc{P}_2\m\mc{F}_{p_2}^{-1} \right)+ \beta\mc{Y}_{k+1}\m\mc{F}_{p_2}^{-1}$. 
   \If{tol $\leq \epsilon$}
   \State\textbf{break} 
  \EndIf
  \State $\mc{Y}_0\gets \mc{Y}_k$ and $\mc{X}_0\gets \mc{X}_k$
   \EndWhile 
\State \Return $\mc{X}_k$ 
 \end{algorithmic}
\end{algorithm}
Theorem \ref{weak-reg-ten} can be restated for preconditioned tensors as follows.
\begin{theorem}
   Consider $M\in\mbc^{p\times p}$, $\mc{A}$, $\mc{B}$,  $\mc{C}$, $\mc{P}_1$ and $\mc{P}_2$ as defined in \eqref{eq14}.  Let $\mc{P}_1\m\mc{A}=\mc{F}_{p_1}-\mc{G}_{p_1}$ be a weak regular splitting of $\mc{P}_1\m\mc{A}$ and  $\mc{B}\m\mc{P}_2=\mc{F}_{p_2}-\mc{G}_{p_2}$ be a weak regular  splitting of $\mc{B}\m\mc{P}_2$. Consider $\left\{\mc {X}_k\right\}$, $\left\{\mc {Y}_k\right\}$  as defined in \eqref{ptpit},  $0<\alpha<\frac{2}{1+\rho(\mc{F}_{p_1}^{-1}\m\mc{G}_{p_1})}$ and $0<\beta<\frac{2}{1+\rho(\mc{F}_{p_2}^{-1}\m\mc{G}_{p_2})}$.  If $(\mc{P}_1\m\mc{A})^{-1}\geq 0$ and $(\mc{B}\m\mc{P}_2)^{-1}\geq 0$ then   $\left\{\mc{X}_k\right\}$  converges  to the exact solution $\mc{A}^{-1}\m\mc{C}\m\mc{B}^{-1}$ for any initial tensor $\mc{X}_0$ with $\mc{Y}_0=\mc{X}_0\m\mc{B}$. 
\end{theorem}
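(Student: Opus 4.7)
The plan is to derive this statement as a direct corollary of Theorem \ref{thm-pcgt}, using Corollary \ref{thm-weak} to discharge the two spectral radius hypotheses from the nonnegativity assumptions on $(\mc{P}_1\m\mc{A})^{-1}$ and $(\mc{B}\m\mc{P}_2)^{-1}$. The structural template is exactly the one already used for the non-preconditioned analogue Theorem \ref{weak-reg-ten}, with the roles of $\mc{A}$ and $\mc{B}$ taken over by the preconditioned tensors $\mc{P}_1\m\mc{A}$ and $\mc{B}\m\mc{P}_2$.

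First I would observe that $\mc{P}_1\m\mc{A}$ and $\mc{B}\m\mc{P}_2$ are both nonsingular, since $\mc{A}$ and $\mc{B}$ are nonsingular by the standing assumption in \eqref{maineq} and the preconditioners $\mc{P}_1$, $\mc{P}_2$ are chosen nonsingular in the construction of the PTSPI method. This makes Corollary \ref{thm-weak} applicable to both splittings. Applying that corollary to the weak regular splitting $\mc{P}_1\m\mc{A}=\mc{F}_{p_1}-\mc{G}_{p_1}$, together with the hypothesis $(\mc{P}_1\m\mc{A})^{-1}\geq 0$, yields $\rho(\mc{F}_{p_1}^{-1}\m\mc{G}_{p_1})<1$. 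The same argument applied to the weak regular splitting $\mc{B}\m\mc{P}_2=\mc{F}_{p_2}-\mc{G}_{p_2}$ in conjunction with $(\mc{B}\m\mc{P}_2)^{-1}\geq 0$ yields $\rho(\mc{F}_{p_2}^{-1}\m\mc{G}_{p_2})<1$.

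With both spectral radii now known to be strictly less than one, and with the parameter bounds $0<\alpha<\frac{2}{1+\rho(\mc{F}_{p_1}^{-1}\m\mc{G}_{p_1})}$ and $0<\beta<\frac{2}{1+\rho(\mc{F}_{p_2}^{-1}\m\mc{G}_{p_2})}$ explicitly assumed, every hypothesis of Theorem \ref{thm-pcgt} is in place. Invoking that theorem immediately delivers convergence of $\{\mc{X}_k\}$ to $(\mc{P}_1\m\mc{A})^{-1}\m\mc{P}_1\m\mc{C}\m\mc{P}_2\m(\mc{B}\m\mc{P}_2)^{-1}$, which simplifies by Proposition \ref{prop-basic}(v) to the desired exact solution $\mc{A}^{-1}\m\mc{C}\m\mc{B}^{-1}$ for any initial tensor $\mc{X}_0$ with $\mc{Y}_0=\mc{X}_0\m\mc{B}$.

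There is no genuine obstacle here: the result is a bookkeeping specialization of the general preconditioned convergence theorem to the weak regular splitting regime, obtained by swapping the direct spectral radius hypothesis for the often more checkable inverse-nonnegativity hypothesis via Corollary \ref{thm-weak}. The only point that deserves an explicit line in the write-up is the nonsingularity of $\mc{P}_1\m\mc{A}$ and $\mc{B}\m\mc{P}_2$ needed to legitimately apply Corollary \ref{thm-weak}, and this is automatic from the nonsingularity of the four factors.
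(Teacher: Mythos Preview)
Your proposal is correct and matches the paper's own treatment: the paper does not give an explicit proof of this theorem but presents it as the preconditioned restatement of Theorem~\ref{weak-reg-ten}, which in turn is declared to follow from Theorem~\ref{thm-cgt} together with Corollary~\ref{thm-weak}. Your argument---apply Corollary~\ref{thm-weak} to each weak regular splitting to obtain the two spectral radius bounds, then invoke Theorem~\ref{thm-pcgt}---is exactly this route specialized to the preconditioned setting.
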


\begin{theorem}
Consider $M\in\mbc^{p\times p}$, $\mc{A}$, $\mc{B}$,  $\mc{C}$, $\mc{P}_1$ and $\mc{P}_2$ as defined in \eqref{eq14} with $\mc{A}^{-1}\geq 0,~\mc{B}^{-1}\geq 0,~(\mc{P}_1\m\mc{A})^{-1}\geq 0$ and $(\mc{B}\m\mc{P}_2)^{-1}\geq 0$.  Let $\mc{A}=\mc{F}_1-\mc{G}_1$  $\mc{P}_1\m\mc{A}=\mc{F}_{p_1}-\mc{G}_{p_1}$, $\mc{B}=\mc{F}_2-\mc{G}_2$ and $\mc{B}\m\mc{P}_2=\mc{F}_{p_2}-\mc{G}_{p_2}$ be respectively a weak regular splitting of $\mc{A}$, $\mc{P}_1\m\mc{A}$, $\mc{B}$ and  $\mc{B}\m\mc{P}_2$. If
\[\rho(\mc{F}_{p_1}^{-1}\m\mc{G}_{p_1})<\rho(\mc{F}_1^{-1}\m\mc{G}_1),~~\rho(\tilde{\mc{F}_2}\m\tilde{\mc{G}_2})<\rho(\mc{F}_2^{-1}\m\mc{G}_2),\]
then PTSPI method \eqref{ptpit}  converges faster than iterative scheme \eqref{tpit} for $0<\alpha<1$ and $0<\beta<1$.
\end{theorem}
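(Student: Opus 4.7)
The plan is to derive explicit formulas for the spectral radii of the iteration tensors appearing in the TSPI scheme \eqref{tpit} and the PTSPI scheme \eqref{ptpit}, and then compare them using the two strict inequalities supplied by the hypothesis. The key tool is the nonnegativity structure delivered by the weak regular splittings, together with the Perron--Frobenius-type statement in Lemma \ref{preron}.

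First I would exploit the weak regular splitting definitions: $\mc{F}_1^{-1}\m\mc{G}_1\geq 0$ and $\mc{F}_{p_1}^{-1}\m\mc{G}_{p_1}\geq 0$, and analogously for the $\mc{B}$-side. Rewriting
\[
\mc{I}_{mmp}-\alpha\mc{F}_1^{-1}\m\mc{A}=(1-\alpha)\mc{I}_{mmp}+\alpha\mc{F}_1^{-1}\m\mc{G}_1,
\]
the iteration tensor is nonnegative for $0<\alpha\leq 1$. Passing to each frontal slice via Proposition \ref{prop-basic}, every slice $(1-\alpha)I_m+\alpha(\tilde{\mc{F}_1})^{-1}(:,:,i)\tilde{\mc{G}_1}(:,:,i)$ is a nonnegative matrix, and Lemma \ref{preron} furnishes a Perron eigenvector making its spectral radius equal to $1-\alpha+\alpha\rho((\tilde{\mc{F}_1})^{-1}(:,:,i)\tilde{\mc{G}_1}(:,:,i))$. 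Taking the maximum over $i$ yields
\[
\rho(\mc{I}_{mmp}-\alpha\mc{F}_1^{-1}\m\mc{A})=1-\alpha+\alpha\rho(\mc{F}_1^{-1}\m\mc{G}_1),\qquad \rho(\mc{I}_{mmp}-\alpha\mc{F}_{p_1}^{-1}\m\mc{P}_1\m\mc{A})=1-\alpha+\alpha\rho(\mc{F}_{p_1}^{-1}\m\mc{G}_{p_1}).
\]

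Second, I would handle the $\mc{B}$-side. Here the iteration uses right multiplication $\mc{B}\m\mc{F}_2^{-1}$, so the natural intermediate object is $\mc{F}_2^{-1}\m\mc{B}=\mc{I}_{nnp}-\mc{F}_2^{-1}\m\mc{G}_2$, which has the same slice-wise spectrum as $\mc{B}\m\mc{F}_2^{-1}$ (by the classical fact that $AB$ and $BA$ share nonzero eigenvalues; here both products are invertible). Since $\mc{F}_2^{-1}\m\mc{G}_2\geq 0$, for any of its eigenvalues $\mu$ and $0<\beta\leq 1$ we have $|(1-\beta)+\beta\mu|\leq (1-\beta)+\beta|\mu|\leq (1-\beta)+\beta\rho(\mc{F}_2^{-1}\m\mc{G}_2)$, with equality at the Perron eigenvalue; hence $\rho(\mc{I}_{nnp}-\beta\mc{B}\m\mc{F}_2^{-1})=1-\beta+\beta\rho(\mc{F}_2^{-1}\m\mc{G}_2)$, and analogously for the preconditioned counterpart. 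Combining this with the hypotheses $\rho(\mc{F}_{p_1}^{-1}\m\mc{G}_{p_1})<\rho(\mc{F}_1^{-1}\m\mc{G}_1)$ and $\rho(\mc{F}_{p_2}^{-1}\m\mc{G}_{p_2})<\rho(\mc{F}_2^{-1}\m\mc{G}_2)$ (interpreting the $\tilde{\cdot}$ in the statement as a typographical slip) together with $\alpha,\beta>0$, each PTSPI side-radius is strictly smaller than the corresponding TSPI side-radius.

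Finally, I would invoke the block-triangular representation of the global iteration tensor constructed in the proof of Theorem \ref{thtpit}, namely $\mc{T}=\begin{bmatrix}\mc{B}&\mc{D}\\ \mc{O}&\mc{C}\end{bmatrix}$, together with Proposition \ref{pblsp}. This gives $\rho(\mc{T})=\max\{\rho(\mc{B}),\rho(\mc{C})\}$, so the global convergence factor of each scheme equals the maximum of the two side-radii. Since both side-radii strictly decrease when passing from TSPI to PTSPI, so does their maximum, proving the claim. The main obstacle I anticipate is the first step: justifying the exact identity for the spectral radius of an identity-perturbation by a nonnegative tensor under the $M$-product. The argument is clean only after the reduction to matrix statements on each frontal slice, because Perron--Frobenius in the $M$-product setting is inherently slice-wise; once that reduction is in place, the $\mc{B}$-side adaptation and the block assembly are routine.
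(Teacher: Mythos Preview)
Your proposal is correct and follows essentially the same route as the paper: rewrite each iteration tensor as $(1-\alpha)\mc{I}+\alpha\,\mc{F}^{-1}\m\mc{G}$, apply Perron--Frobenius slice-wise via Lemma~\ref{preron} to obtain the exact identity $\rho(\mc{I}-\alpha\mc{F}^{-1}\m\mc{A})=1-\alpha+\alpha\,\rho(\mc{F}^{-1}\m\mc{G})$ for $0<\alpha<1$, and then compare using the hypothesis. Your treatment is in fact slightly more careful than the paper's, which handles the $\mc{B}$-side by a bare ``similarly'' (without your $AB$/$BA$ similarity remark) and declares the proof complete once the two side-radius inequalities \eqref{eq19}--\eqref{eq20} are established, omitting the final block-triangular step you add via Proposition~\ref{pblsp}.
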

\begin{proof}
Let $0<\alpha<1$ and $0<\beta<1$. For each $i$, since $\wtilde{(\mc{F}_1^{-1}\m\mc{G})}(:,:,i)$ is a nonnegative matrix, so by Lemma \ref{preron}, there exist a nonnegative eigenvalue (say $\lambda$ ) such that $\rho\left(\wtilde{(\mc{F}_1^{-1}\m\mc{G})}(:,:,i)\right)=\lambda$. Let $\mc{T}=\mc{I}_{mmp}-\alpha\mc{F}_{1}^{-1}\m\mc{A}=(1-\alpha)\mc{I}_{mmp}+\alpha\mc{F}_{1}^{-1}\m\mc{G}$. Then 
\begin{equation}\label{eq16}
1-\alpha + \alpha\rho\left(\wtilde{(\mc{F}_1^{-1}\m\mc{G})}(:,:,i)\right)= \rho (\tilde{T}(:,:,i)),~i=1,2,\ldots,p.   
\end{equation}
From equation \eqref{eq16}, we obtain 
\begin{eqnarray}\label{eq17}
\nonumber
   \rho(\mc{I}_{mmp}-\alpha\mc{F}_{1}^{-1}\m\mc{A} )&=&\max_{1\leq i\leq p}\rho (\tilde{T}(:,:,i))=1-\alpha+\alpha\max_{1\leq i\leq p}\rho\left(\wtilde{(\mc{F}_1^{-1}\m\mc{G})}(:,:,i)\right)\\
   &=&1-\alpha+\alpha\rho(\mc{F}_{1}^{-1}\m\mc{G}).
\end{eqnarray}
In the similar manner, for the splitting $\mc{P}_1\m\mc{A}=\mc{F}_{p_1}-\mc{G}_{p_1}$, we can show that 
\begin{equation}\label{eq18}
 \rho (\mc{I}_{mmp}-\alpha\mc{F}_{p_1}^{-1}\m\mc{P}_1\m\mc{A})= 1-\alpha + \alpha\rho(\mc{F}_{p_1}^{-1}\m \mc{G}_{p_1}).   
\end{equation}
 From equations \eqref{eq17}, \eqref{eq18} and the given condition $\rho(\mc{F}_{p_1}^{-1}\m\mc{G}_{p_1})<\rho(\mc{F}_1^{-1}\m\mc{G}_1)$, we get  
 \begin{equation}\label{eq19}
   \rho (\mc{I}_{mmp}-\alpha\mc{F}_{p_1}^{-1}\m\mc{P}_1\m\mc{A})<\rho(\mc{I}_{mmp}-\alpha\mc{F}_{1}^{-1}\m\mc{A} ).  
 \end{equation}
Similarly, we can show that 
\begin{equation}\label{eq20}
   \rho (\mc{I}_{nnp}-\alpha\mc{F}_{p_2}^{-1}\m\mc{P}_2\m\mc{B})<\rho(\mc{I}_{nnp}-\alpha\mc{F}_{2}^{-1}\m\mc{B} ).  
 \end{equation}
\end{proof}
The proof is complete from equation \eqref{eq19} and equation \eqref{eq20}.
\begin{example}\rm\label{ex-pcond-1}
Let $M=\begin{bmatrix}
    1&0\\
    0&2
\end{bmatrix}$  and consider a multilinear system $\mc{A}\m\mc{X}\m\mc{B}=\mc{C}$ with 
 \[\mc{A}(:,:,1)=\begin{bmatrix}
     2   &          -1      &       -1  \\     
	      -2      &        7/2      &     -1/2 \\    
	      -5/2       &    -3/2     &       7/2 
 \end{bmatrix},~\mc{A}(:,:,2)=\begin{bmatrix}
     2        &     -2       &       0     \\  
	      -3       &       5     &        -2\\       
	      -1            & -2       &       9 
 \end{bmatrix},~\mc{C}(:,:,1)=\begin{bmatrix}
     1  &          1      &       1  \\     
	      0     &        1      &     -1\\    
	      2       &    -3     &       0 
 \end{bmatrix}\]
  \[\mc{B}(:,:,1)=\begin{bmatrix}
    5/2      &     -1     &         0    \\   
	      -1          &    5/2     &      -1   \\    
	      -3/2       &    -2     &         2 
 \end{bmatrix},~\mc{B}(:,:,2)=\begin{bmatrix}
   5          &   -2     &         0       \\
	      -2        &      5     &        -2  \\     
	      -3          &   -4     &         4 
 \end{bmatrix},~ \mc{C}(:,:,2)=\begin{bmatrix}
     -1   &          -1      &       -1  \\     
	      0      &        2      &     -1 \\    
	      0       &    -3    &       0 
 \end{bmatrix}.\]
We can verify that $\mc{Z}_{\mc{A}}:=\mc{A}^{-1}\geq 0$ and $\mc{Z}_{\mc{B}}:=\mc{B}^{-1}\geq 0$  since 
\[\wtilde{~\mc{Z}_{\mc{A}}}(:,:,1)=\begin{bmatrix}
  23/6     &       5/3      &      4/3     \\
	      11/4        &    3/2       &     1  \\     
	      47/12     &     11/6   &         5/3    
\end{bmatrix},~\wtilde{~\mc{Z}_{\mc{A}}}(:,:,2)=\begin{bmatrix}
 41/48       &    3/8      &      1/12  \\  
	      29/48      &     3/8      &      1/12 \\   
	      11/48       &    1/8     &       1/12  
\end{bmatrix}\mbox{ and}\]
 \[\wtilde{~\mc{Z}_{\mc{B}}}(:,:,1)=\begin{bmatrix}
 3/4   &         1/2    &        1/4     \\
	       7/8       &     5/4      &      5/8 \\    
	      23/16      &    13/8       &    21/16 
\end{bmatrix},~\wtilde{~\mc{Z}_{\mc{B}}}(:,:,2)=\begin{bmatrix}
  3/16      &     1/8     &       1/16  \\  
	       7/32    &       5/16  &         5/32 \\   
	      23/64        &  13/32   &       21/64
\end{bmatrix}.\]
Consider the tensor splittings for $\mc{A}$ and $\mc{B}$ as  $\mc{A}=\mc{F}_1-\mc{G}_1$ and $\mc{B}=\mc{F}_2-\mc{G}_2$, where 
\[\mc{F}_1(:,:,1)=\begin{bmatrix}
    2 &0 &0\\
    0& 7/2& 0\\
    0&0&7/2
\end{bmatrix},~ 
	\mc{F}_1(:,:,2) =\begin{bmatrix}
    2 &0 &0\\ 
	0& 5& 0\\
    0&0&9
    \end{bmatrix},~\mc{G}_1=\mc{F}_1-\mc{A},\]
    \[\mc{F}_2(:,:,1)=\begin{bmatrix}
    5/2 &0 &0\\
    0& 5/2& 0\\
    0&0&2
\end{bmatrix},~ 
	\mc{F}_2(:,:,2) =\begin{bmatrix}
    5 &0 &0\\ 
	0& 5& 0\\
    0&0&4
    \end{bmatrix},~\mc{G}_2=\mc{F}_2-\mc{B}.\]
It can be verified that $\mc{F}_1^{-1}\geq 0,~\mc{F}_2^{-1}\geq 0,~(\mc{F}_1^{-1}\m\mc{G}_1)\geq 0$, and $(\mc{F}_2^{-1}\m\mc{G}_2)\geq 0$. Thus both $\mc{A}=\mc{F}_1-\mc{G}_1$ and $\mc{B}=\mc{F}_2-\mc{G}_2$, respectively are weak regular splittings of $\mc{A}$ and $\mc{B}$. Let us consider the preconditioned nonsingular tensors $\mc{P}_1=\mc{P}_2=\mc{P}$, where 
\[\mc{P}(:,:,1) = \begin{bmatrix}
	       1         &     0     &         0   \\    
	       1/2         &   1    &          0  \\     
	       5/4         &   1/2       &     1 
           \end{bmatrix},~
	\mc{P}(:,:,2) = \begin{bmatrix}
	       1/4        &    0      &        0 \\      
	       1/8       &     1/4     &       0 \\      
	       5/16        &   1/8    &        1/4
            \end{bmatrix}.\]
Clearly,  $\mc{Z}_{\mc{PA}}:=(\mc{P}\m\mc{A})^{-1}\geq 0$ and $\mc{Z}_{\mc{BP}}:=(\mc{B}\m\mc{P})^{-1}\geq 0$  since
\[\wtilde{~\mc{Z}_{\mc{PA}}}(:,:,1)=\begin{bmatrix}
  5/3        &     1     &          4/3     \\
	       1          &     1        &       1  \\     
	       4/3          &   1     &          5/3 
\end{bmatrix},~\wtilde{~\mc{Z}_{\mc{PA}}}(:,:,2)=\begin{bmatrix}
7/6      &       2/3     &        1/6     \\
	       2/3      &       2/3      &       1/6  \\   
	       1/6       &      1/6       &      1/6 
\end{bmatrix}\mbox{ and}\]
 \[\wtilde{~\mc{Z}_{\mc{BP}}}(:,:,1)=\begin{bmatrix}
 3/4     &       1/2 &           1/4     \\
	       1/2     &       1     &         1/2   \\  
	       1/4          &  1/2      &      3/4 
\end{bmatrix},~\wtilde{~\mc{Z}_{\mc{BP}}}(:,:,2)=\begin{bmatrix}
 3/8        &    1/4    &        1/8    \\ 
	       1/4    &        1/2  &          1/4 \\    
	       1/8         &   1/4      &      3/8 
\end{bmatrix}.\]
Consider splittings for $\mc{P}\m\mc{A}$ and $\mc{B}\m\mc{P}$ as  $\mc{P}\m\mc{A}=\mc{F}_{p_1}-\mc{G}_{p_1}$ and $\mc{B}\m\mc{P}=\mc{F}_{p_2}-\mc{G}_{p_2}$, where 
\[\mc{F}_{p_1}(:,:,1)=\begin{bmatrix}
    2 &0 &0\\
    0& 3& 0\\
    0&0&3
\end{bmatrix},~ 
	\mc{F}_{p_1}(:,:,2) =\begin{bmatrix}
    1 &0 &0\\ 
	0& 2& 0\\
    0&0&4
    \end{bmatrix},~\mc{G}_{p_1}=\mc{F}_{p_1}-\mc{P}\m\mc{A},\]
    \[\mc{F}_{p_2}(:,:,1)=\begin{bmatrix}
    2 &0 &0\\
    0& 2& 0\\
    0&0&2
\end{bmatrix}= 
	\mc{F}_{p_2}(:,:,2),~\mc{G}_{p_2}=\mc{F}_{p_2}-\mc{B}\m\mc{P}.\]
Moreover,  we can verify that $\mc{F}_{p_1}\geq 0,~\mc{F}_{p_2}^{-1}\geq 0,~(\mc{F}_{p_1}^{-1}\m\mc{G}_{p_1})\geq 0$, and $(\mc{F}_{p_2}^{-1}\m\mc{G}_{p_2})\geq 0$. Thus both $\mc{P}\m\mc{A}=\mc{F}_{p_1}-\mc{G}_{p_1}$ and $\mc{B}\m\mc{P}=\mc{F}_{p_2}-\mc{G}_{p_2}$, respectively are weak regular splittings of $\mc{P}\m\mc{A}$ and $\mc{B}\m\mc{P}$. In addition, we evaluate 
\[\rho(\mc{F}_{p_1}^{-1}\m\mc{G}_{p_1})=0.8792<0.9424=\rho(\mc{F}_1^{-1}\m\mc{G}_1),~\rho(\tilde{\mc{F}_2}\m\tilde{\mc{G}_2})=0.7071<0.8385=\rho(\mc{F}_2^{-1}\m\mc{G}_2).\]
\end{example}
In Example \ref{ex-pcond-1}, one can notice that the preconditioned approach not only convergent but also converges faster. The comparison of residual errors, mean CPU time for different tolerance ($\epsilon$) and parameters ($\alpha,~\beta$)  are provided in Table \ref{tab-precond-1}.
\begin{table}
    \centering
    \caption{Comparison analysis for Example \ref{ex-pcond-1}}
    \vspace{0.2cm}
    \renewcommand{\arraystretch}{1.2}
    \begin{tabular}{c|c|c|c|c|c|c}
    \hline
 $\alpha,\beta$&   Splittings& $\epsilon$& IT& \scriptsize{$\|\mc{C}-\mc{A}\m\mc{X}_{k}\m\mc{B}\|$} & \scriptsize{$\|\mc{X}_{k}-\mc{A}^{-1}\m\mc{C}\m\mc{B}^{-1}\|$}& MT\\
 \hline 
$\alpha=0.95=\beta$ &   $\mc{F}_i-\mc{G}_i,~i=1,2$  & $10^{-7}$ &  297 & $1.3593e^{-07}$ & $1.6350e^{-06}$ & 0.20\\
 
$\alpha=0.95=\beta$ &   $\mc{F}_{p_i}-\mc{G}_{p_i}~i=1,2$  & $10^{-7}$ &  122 & $3.1105e^{-07}$ & $1.83170e^{-06}$ & 0.06\\
  \hline
$\alpha=0.95=\beta$ &   $\mc{F}_i-\mc{G}_i,~i=1,2$  & $10^{-9}$ &  378 & $1.4267e^{-09}$ & $1.7160e^{-08}$ & 0.25\\

$\alpha=0.95=\beta$ &   $\mc{F}_{p_i}-\mc{G}_{p_i}~i=1,2$  & $10^{-9}$ &  160 & $3.0217e^{-09}$ & $1.8709e^{-08}$ & 0.08\\
\hline   
    $\alpha=0.95=\beta$ &   $\mc{F}_i-\mc{G}_i,~i=1,2$  & $10^{-15}$ &  612 & $1.9850e^{-14}$ & $3.6171e^{-14}$ & 0.35\\

$\alpha=0.95=\beta$ &   $\mc{F}_{p_i}-\mc{G}_{p_i}~i=1,2$  & $10^{-15}$ &  272 & $2.0677e^{-14}$ & $2.9366e^{-14}$ & 0.12\\
    \hline
    $\alpha=0.6=\beta$ &   $\mc{F}_i-\mc{G}_i,~i=1,2$  & $10^{-7}$ &  462 & $2.2552e^{-07}$ & $2.7126e^{-06}$ & 0.28\\
 
$\alpha=0.6=\beta$ &   $\mc{F}_{p_i}-\mc{G}_{p_i}~i=1,2$  & $10^{-7}$ &  186 & $5.0904e^{-07}$ & $3.2870e^{-06}$ & 0.10\\
    \hline
$\alpha=0.6=\beta$ &   $\mc{F}_i-\mc{G}_i,~i=1,2$  & $10^{-9}$ &  593 & $2.2532e^{-09}$ & $2.7102e^{-08}$ & 0.34\\

$\alpha=0.6=\beta$ &   $\mc{F}_{p_i}-\mc{G}_{p_i}~i=1,2$  & $10^{-9}$ &  217& $5.1598e^{-09}$ & $3.2999e^{-08}$ & 0.11\\
   \hline
    $\alpha=0.6=\beta$ &   $\mc{F}_i-\mc{G}_i,~i=1,2$  & $10^{-15}$ &  972 & $2.5362e^{-14}$ & $8.0569e^{-14}$ & 0.51\\
 
$\alpha=0.6=\beta$ &   $\mc{F}_{p_i}-\mc{G}_{p_i}~i=1,2$  & $10^{-15}$ &  431 & $1.7751e^{-14}$ & $7.5738e^{-14}$ & 0.20\\
    \hline
     $\alpha=0.3=\beta$ &   $\mc{F}_i-\mc{G}_i,~i=1,2$  & $10^{-7}$ &  892 & $4.7073e^{-07}$ & $5.6620e^{-06}$ & 0.47\\
 
$\alpha=0.3=\beta$ &   $\mc{F}_{p_i}-\mc{G}_{p_i}~i=1,2$  & $10^{-7}$ &  341 & $1.1433e^{-06}$ & $7.4158e^{-06}$ & 0.15\\
    \hline
$\alpha=0.3=\beta$ &   $\mc{F}_i-\mc{G}_i,~i=1,2$  & $10^{-9}$ &  1157 & $4.6481e^{-09}$ & $5.5908e^{-08}$ & 0.59\\

$\alpha=0.3=\beta$ &   $\mc{F}_{p_i}-\mc{G}_{p_i}~i=1,2$  & $10^{-9}$ &  465& $1.1701e^{-08}$ & $7.4136e^{-08}$ & 0.20\\
   \hline
    $\alpha=0.3=\beta$ &   $\mc{F}_i-\mc{G}_i,~i=1,2$  & $10^{-15}$ &  1885 & $6.3205e^{-14}$ & $2.8073e^{-13}$ & 0.89\\
 
$\alpha=0.3=\beta$ &   $\mc{F}_{p_i}-\mc{G}_{p_i}~i=1,2$  & $10^{-15}$ &  834 & $5.1211e^{-14}$ & $2.8370e^{-13}$ & 0.36\\
    \hline
    \end{tabular}
    \label{tab-precond-1}
\end{table}

%\begin{figure}[H]
%\begin{center}
%\includegraphics[height=8.6cm]{GS_GJ.eps}
%\caption{(a) Jacobi with }\label{image3d}
%\end{center}
%\end{figure}

\section{Application to Sylvester equations and image deblurring}\label{sec4}
The Sylvester matrix equation $AY + YB^T =C$ plays a fundamental role in systems and control theory, with numerous applications in various domains \cite{ding2005gradient, DingTong05, Golub79}. Recent research has extended this classical problem to higher-order tensor frameworks, mainly focusing on the Sylvester tensor equations involving Tucker products \cite{ShiWei13}, Einsten products \cite{Baodong}, and $t$-products \cite{chen2023perturbations}. The authors of \cite{ShiWei13} comprehensively analyze the backward error and perturbation bounds for the higher Sylvester tensor equations. In addition, Lai et al. \cite{ChenLu12} developed an innovative approach that combined projection methods with Kronecker product preconditioning to solve Sylvester tensor equations. The Sylvester tensor equation under the $M$-product is represented as 
\[\mc{A}_1\m\mc{Y}+\mc{Y}\m\mc{B}_1=\mc{C}_1, \]
where $\mc{A}_1\in\mbc^{m\times m\times p}$, $\mc{B}_1\in\mbc^{n\times n\times p}$, $\mc{C}_1\in\mbc^{m\times n\times p}$, and $\mc{Y}\in\mbc^{m\times n\times p}$. It can be expressed  in block-tensor form as 
\[
\begin{bmatrix}
        \mc{A}_1&\mc{I}_{mmp}
    \end{bmatrix}\m\begin{bmatrix}
        \mc{Y}&\mc{O}\\
        \mc{O}&\mc{Y}
    \end{bmatrix}\m\begin{bmatrix}
        \mc{I}_{nnp}\\
        \mc{B}_1
    \end{bmatrix}=\mc{C}_1.
\]
In general, we can consider Sylvester tensor equation of the  following form:
\begin{equation}\label{eq21}
    \mc{A}\m\mc{X}\m\mc{B}=\mc{C},
\end{equation}
where $\mc{A}\in\mbc^{m\times k\times p}$, $\mc{X}\in\mbc^{k\times s\times p}$, $\mc{B}\in\mbc^{s\times n\times p}$ and $\mc{C}\in\mbc^{m\times n\times p}$. Next, we discuss the solution of the tensor equation \eqref{eq21}.
\begin{theorem}
 Consider  $M\in\mbc^{p\times p}$, $\mc{A}\in\mbc^{m\times k\times p}$, $\mc{B}\in\mbc^{s\times n\times p}$ and $\mc{C}\in\mbc^{m\times n\times p}$. Then the tensor equation \eqref{eq21} is consistent if and only if $\mc{A}\m\mc{A}^{(1)}\m\mc{C}\m\mc{B}^{(1)}\m\mc{B}=\mc{C}$. In this case, the general solution is given by 
\begin{equation}\label{eq22}
    \mc{X}=\mc{A}^{(1)}\m\mc{C}\m\mc{B}^{(1)}+\mc{Z}-\mc{A}^{(1)}\m \mc{A}\m\mc{Z}\m\mc{B}\m\mc{B}^{(1)},
\end{equation}
for some $\mc{Z}\in\mbc^{k\times s\times p}$.
\end{theorem}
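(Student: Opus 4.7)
The plan is to mimic the classical matrix argument that characterizes consistency and the general solution of $AXB=C$ via inner inverses, transferred verbatim to the $M$-product setting. Throughout I rely on the associativity of $\m$ (inherited from associativity of the face-product through Proposition \ref{prop-basic}) and the defining identities $\mc{A}\m\mc{A}^{(1)}\m\mc{A}=\mc{A}$ and $\mc{B}\m\mc{B}^{(1)}\m\mc{B}=\mc{B}$ of inner inverses.

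For the consistency equivalence, I would argue both directions directly. If $\mc{A}\m\mc{A}^{(1)}\m\mc{C}\m\mc{B}^{(1)}\m\mc{B}=\mc{C}$, then setting $\mc{X}_0=\mc{A}^{(1)}\m\mc{C}\m\mc{B}^{(1)}$ gives $\mc{A}\m\mc{X}_0\m\mc{B}=\mc{C}$, so \eqref{eq21} is consistent. Conversely, if some $\mc{X}$ satisfies $\mc{A}\m\mc{X}\m\mc{B}=\mc{C}$, then
\[
\mc{A}\m\mc{A}^{(1)}\m\mc{C}\m\mc{B}^{(1)}\m\mc{B}
= \mc{A}\m\mc{A}^{(1)}\m\mc{A}\m\mc{X}\m\mc{B}\m\mc{B}^{(1)}\m\mc{B}
= \mc{A}\m\mc{X}\m\mc{B}=\mc{C},
\]
which gives the necessity of the compatibility condition.

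For the general-solution formula \eqref{eq22}, I first verify that for every $\mc{Z}\in\mbc^{k\times s\times p}$ the right-hand side is a solution, assuming consistency. Substituting into \eqref{eq21} and using associativity,
\[
\mc{A}\m\!\left(\mc{A}^{(1)}\m\mc{C}\m\mc{B}^{(1)}+\mc{Z}-\mc{A}^{(1)}\m\mc{A}\m\mc{Z}\m\mc{B}\m\mc{B}^{(1)}\right)\!\m\mc{B}
= \mc{A}\m\mc{A}^{(1)}\m\mc{C}\m\mc{B}^{(1)}\m\mc{B}+\mc{A}\m\mc{Z}\m\mc{B}-\mc{A}\m\mc{Z}\m\mc{B}=\mc{C},
\]
where the last two middle terms cancel because $\mc{A}\m\mc{A}^{(1)}\m\mc{A}=\mc{A}$ and $\mc{B}\m\mc{B}^{(1)}\m\mc{B}=\mc{B}$, and the first term equals $\mc{C}$ by the consistency condition.

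Conversely, to show every solution is captured, I would take an arbitrary solution $\mc{X}$ and choose the parameter $\mc{Z}=\mc{X}$ in \eqref{eq22}. Then the right-hand side becomes $\mc{A}^{(1)}\m\mc{C}\m\mc{B}^{(1)}+\mc{X}-\mc{A}^{(1)}\m\mc{A}\m\mc{X}\m\mc{B}\m\mc{B}^{(1)}$, and using $\mc{A}\m\mc{X}\m\mc{B}=\mc{C}$ the first and third terms cancel, leaving just $\mc{X}$. This completes the characterization. The only real subtlety is confirming associativity and bilinearity of $\m$ across block sizes so that all compositions make sense; this is routine from Proposition \ref{prop-basic} and poses no genuine obstacle. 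The argument is essentially a direct lift of the matrix proof, and no new machinery is needed beyond the inner-inverse identities.
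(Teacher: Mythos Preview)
Your proof is correct and follows essentially the same route as the paper: both directions of the consistency equivalence are argued via the inner-inverse identities, and the general-solution part is handled by first checking that every expression \eqref{eq22} solves \eqref{eq21} and then, given an arbitrary solution $\mc{X}$, choosing the parameter $\mc{Z}=\mc{X}$ so that the first and third terms cancel. The paper merely asserts the forward verification as ``clear'' whereas you spell it out, but the logical structure is identical.
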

\begin{proof}
 Let $\mc{A}\m\mc{A}^{(1)}\m\mc{C}\m\mc{B}^{(1)}\m\mc{B}=\mc{C}$ and $\mc{Z}=\mc{A}^{(1)}\m\mc{C}\m\mc{B}^{(1)}$. Then $\mc{Z}$ is a solution of equation \eqref{eq21} since 
 \[\mc{A}\m\mc{Z}\m\mc{B}=\mc{A}\m\mc{A}^{(1)}\m\mc{C}\m\mc{B}^{(1)}\m\mc{B}=\mc{C}.\]
 Conversely, if equation \eqref{eq21} has a solution say $\mc{Z}$, then 
 \[\mc{C}=\mc{A}\m\mc{Z}\m\mc{B}=\mc{A}\m\mc{A}^{(1)}\m\mc{A}\m\mc{Z}\m\mc{B}\m\mc{B}^{(1)}\m\mc{B}=\mc{A}\m\mc{A}^{(1)}\m\mc{C}\m\mc{B}^{(1)}\m\mc{B}.\]
 Clearly $\mc{A}^{(1)}\m\mc{C}\m\mc{B}^{(1)}+\mc{Z}-\mc{A}^{(1)}\m \mc{A}\m\mc{Z}\m\mc{B}\m\mc{B}^{(1)}$ is a solution equation \eqref{eq21}. If $\mc{Z}$ is any other solution of equation \eqref{eq21}, then $\mc{A}\m\mc{Z}\m\mc{B}=\mc{C}$ and 
 \begin{eqnarray*}
     \mc{Z}&=&\mc{A}^{(1)}\m\mc{C}\m\mc{B}^{(1)}+\mc{Z}-\mc{A}^{(1)}\m\mc{C}\m\mc{B}^{(1)}\\
     &=&\mc{A}^{(1)}\m\mc{C}\m\mc{B}^{(1)}+\mc{Z}-\mc{A}^{(1)}\m\mc{A}\m\mc{Z}\m\mc{B}\m\mc{B}^{(1)},
 \end{eqnarray*}
 and completes the proof.
\end{proof}
Now, we define the least-squares solution and minimum-norm least-squares solution \cite{jin2023} for the tensor equations $\mc{A}\m\mc{Z}\m\mc{B}=\mc{C}$. 
\begin{definition}
Consider  $M\in\mbc^{p\times p}$, $\mc{A}\in\mbc^{m\times k\times p}$, $\mc{B}\in\mbc^{s\times n\times p}$ and $\mc{C}\in\mbc^{m\times n\times p}$. A tensor $\mc{Z}_0$  is called a least-squares solution of  $\mc{A}\m\mc{Z}\m\mc{B}=\mc{C}$ if 
\[\|\mc{A}\m\mc{Z}_0\m\mc{B}-\mc{C}\|_M\leq \|\mc{A}\m\mc{Z}\m\mc{B}-\mc{C}\|_M,~\mbox{ for all }\mc{Z}\in\mbc^{k\times 1\times p}.\]
\end{definition}
\begin{definition}
Consider  $M\in\mbc^{p\times p}$, $\mc{A}\in\mbc^{m\times k\times p}$, $\mc{B}\in\mbc^{s\times n\times p}$ and $\mc{C}\in\mbc^{m\times n\times p}$. Define $\mc{S}_{lss}$ be the set of least-squares solutions of $\mc{A}\m\mc{Z}\m\mc{B}=\mc{C}$. A tensor $\mc{Z}_0$  is called the minimum-norm least-squares solution of  $\mc{A}\m\mc{Z}\m\mc{B}=\mc{C}$ if 
\[\|\mc{Z}_0\|_M\leq \|\mc{Z}\|_M,~\mbox{ for all }\mc{Z}\in \mc{S}_{lss}.\]
\end{definition}
\begin{theorem}\label{thm-4.3}
Consider  $M\in\mbc^{p\times p}$, $\mc{A}\in\mbc^{m\times k\times p}$, $\mc{B}\in\mbc^{s\times n\times p}$ and $\mc{C}\in\mbc^{m\times n\times p}$. Then $\mc{X}=\mc{A}^{\dagger}\m\mc{C}\m\mc{B}^{\dagger}$ is the minimum-norm least squares solution of the tensor equation \eqref{eq21}.
\end{theorem}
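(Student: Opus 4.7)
The plan is to reduce the tensor problem to a family of independent matrix least-squares problems by applying the mode-$3$ product with $M$, solve each slice-wise problem with the classical matrix Moore--Penrose theory, and then reassemble using the identification between $M$-product MP inverses and slice-wise face-product MP inverses.

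First, using Proposition \ref{prop-basic}(i), I would rewrite the equation $\mc{A}\m\mc{X}\m\mc{B}=\mc{C}$ as the face-product equation $\tilde{\mc{A}}\Delta\tilde{\mc{X}}\Delta\tilde{\mc{B}}=\tilde{\mc{C}}$, which by the definition of $\Delta$ decouples across frontal slices into the $p$ independent matrix equations
\begin{equation*}
\tilde{\mc{A}}(:,:,i)\,\tilde{\mc{X}}(:,:,i)\,\tilde{\mc{B}}(:,:,i)=\tilde{\mc{C}}(:,:,i),\quad i=1,\ldots,p.
\end{equation*}
Since $\|\cdot\|_M=\max_{1\le i\le p}\|(\cdot)(:,:,i)\|_2$ and the slices of $\tilde{\mc{X}}$ can be chosen independently, minimizing the overall residual tubal norm $\|\mc{A}\m\mc{X}\m\mc{B}-\mc{C}\|_M$ reduces to minimizing each slice-wise matrix residual $\|\tilde{\mc{A}}(:,:,i)\tilde{\mc{X}}(:,:,i)\tilde{\mc{B}}(:,:,i)-\tilde{\mc{C}}(:,:,i)\|_2$ separately, and likewise for $\|\mc{X}\|_M$ among the least-squares solutions.

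Next, I would invoke the classical Penrose result that the unique minimum-norm least-squares solution of a matrix equation $A_iX_iB_i=C_i$ is $X_i=A_i^{\dagger}C_iB_i^{\dagger}$, giving
\begin{equation*}
\tilde{\mc{X}}(:,:,i)=[\tilde{\mc{A}}(:,:,i)]^{\dagger}\,\tilde{\mc{C}}(:,:,i)\,[\tilde{\mc{B}}(:,:,i)]^{\dagger}.
\end{equation*}
By Proposition \ref{prop-face}(ii), the face-product MP inverse has entries $[(\tilde{\mc{A}})_{\Delta}^{\dagger}](:,:,i)=[\tilde{\mc{A}}(:,:,i)]^{\dagger}$, and by Proposition \ref{prop-tildag}(ii) we have the identification $(\tilde{\mc{A}})_{\Delta}^{\dagger}=\wtilde{\mc{A}^{\dagger}}$, and analogously for $\mc{B}$. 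Hence the slice-wise solutions reassemble as $\tilde{\mc{X}}=\wtilde{\mc{A}^{\dagger}}\Delta\tilde{\mc{C}}\Delta\wtilde{\mc{B}^{\dagger}}$. Applying Proposition \ref{prop-basic}(i) twice rewrites the right-hand side as $\wtilde{\mc{A}^{\dagger}\m\mc{C}\m\mc{B}^{\dagger}}$, and post-multiplying by $M^{-1}$ via the $3$-mode product produces $\mc{X}=\mc{A}^{\dagger}\m\mc{C}\m\mc{B}^{\dagger}$.

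The main obstacle is justifying that slice-wise minimization yields tensor-level minimum-norm least-squares optimality when $\|\cdot\|_M$ is a \emph{maximum} rather than a sum of slice norms. Here one uses that, for an optimization over independent variables, the minimum of the maximum coincides with the maximum of the component-wise minima, so achieving each slice minimum simultaneously achieves the global minimum; the delicate part is the uniqueness of the minimum-norm LSS at the tensor level, which is inherited from the classical characterization of $A_i^{\dagger}C_iB_i^{\dagger}$ as the unique least-squares solution lying in the column space of $A_i^{*}$ and row space of $B_i^{*}$, a property preserved slice-by-slice and hence at the tensor level.
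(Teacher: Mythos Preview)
Your approach is essentially identical to the paper's: both pass to the slice-wise matrix equations via Proposition~\ref{prop-basic}(i), apply the classical Penrose result $X_i=A_i^{\dagger}C_iB_i^{\dagger}$ on each frontal slice, and reassemble using Proposition~\ref{prop-tildag}(ii) to obtain $\mc{X}=\mc{A}^{\dagger}\m\mc{C}\m\mc{B}^{\dagger}$. Your explicit justification that independent slice-wise minimization yields the tensor-level $\|\cdot\|_M$-optimum is actually more than the paper supplies; the uniqueness concern you raise at the end is not needed for the theorem as stated (and the paper does not address it either).
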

\begin{proof}
    Let $\mc{A}\m\mc{X}\m\mc{B}=\mc{C}$. Then $\tilde{\mc{C}}=\wtilde{(\mc{A}\m\mc{X}\m\mc{B})}=\tilde{\mc{A}}\Delta\tilde{\mc{X}}\Delta\tilde{\mc{B}}$ and consequently, 
\begin{equation}\label{eq23}
\tilde{\mc{C}}(:,:,i)=\tilde{\mc{A}}(:,:,i)\tilde{\mc{X}}(:,:,i)\tilde{\mc{B}}(:,:,i),~i=1,2,\ldots, p.    
\end{equation}
For each $i$, the  minimum-norm least squares solution of the matrix equation \eqref{eq23} is given by 
\begin{equation}\label{eq24}
\tilde{\mc{X}}(:,:,i)= \left[\tilde{\mc{A}}(:,:,i)\right]^{\dagger}\tilde{\mc{C}}(:,:,i)\left[\tilde{\mc{B}}(:,:,i)\right]^{\dagger}.
\end{equation}
By Proposition \ref{prop-tildag} (ii), equation \eqref{eq24} becomes
\[
\tilde{\mc{X}}(:,:,i)=(\tilde{\mc{A}})^{\dagger}(:,:,i)\tilde{\mc{C}}(:,:,i)(\tilde{\mc{B}})^{\dagger}(:,:,i) \iff \tilde{\mc{X}}=(\tilde{\mc{A}})^{\dagger}\Delta\tilde{\mc{C}}\Delta(\tilde{\mc{B}})^{\dagger}=\wtilde{\mc{A}^{\dagger}}\Delta\tilde{\mc{C}}\Delta\wtilde{\mc{B}^{\dagger}}.
\]
Equivalently, $\mc{X}=\tilde{\mc{X}}\times_3 M^{-1}=\left(\mc{A}^{\dagger}\m\mc{C}\m\mc{B}^{\dagger}\right)\times_3 M\times_3 M^{-1}=\mc{A}^{\dagger}\m\mc{C}\m\mc{B}^{\dagger}$ is the minimum-norm least squares of $\mc{A}\m\mc{X}\m\mc{B}=\mc{C}$.

\end{proof}
The following corollary can be obtained by applying Lemma \ref{tik-mat} and Theorem \ref{thm-4.3}.
\begin{corollary}\label{cor-4.5}
  Consider  $M\in\mbc^{p\times p}$, $\mc{A}\in\mbc^{m\times k\times p}$, $\mc{B}\in\mbc^{s\times n\times p}$ and $\mc{C}\in\mbc^{m\times n\times p}$. Let $\lambda,\mu$ ($>0$) be the regularization parameters. Then
  \begin{enumerate}
      \item[(i)] $\displaystyle\mc{X}_{\lambda\mu}=(\mc{A}^*\m\mc{A}+\lambda \mc{I}_{kkp})^{-1}\m\mc{A}^*\m\mc{C}\m\mc{B}^*\m(\mc{B}\m\mc{B}^*+\mu \mc{I}_{ssp})^{-1}\to\mc{A}^{\dagger}\m\mc{C}\m\mc{B}^{\dagger}$ as $\lambda\to 0$ and $\mu\to 0$.
      \item[(ii)] $ \displaystyle\mc{X}_{\lambda\mu}$ is the minimum-norm least squares solution of the tensor equation \eqref{eq21},  as $\lambda\to 0$ and $\mu\to 0$.
  \end{enumerate}
  
\end{corollary}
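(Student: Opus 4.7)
The plan is to transfer the statement into the hat (i.e.\ $\times_3 M$) domain, invoke the matrix-level Tikhonov limit (Lemma~\ref{tik-mat}) slice by slice, and then transfer back using Propositions~\ref{prop-basic} and~\ref{prop-tildag}. First, I would apply $\times_3 M$ to $\mc{X}_{\lambda\mu}$. Using Proposition~\ref{prop-basic}(i), (iii), (iv) together with the identity $\wtilde{\mc{I}_{kkp}}(:,:,i)=I_k$ (and similarly for $\mc{I}_{ssp}$), each frontal slice satisfies
\begin{equation*}
\wtilde{\mc{X}_{\lambda\mu}}(:,:,i)
= \bigl(\tilde{\mc{A}}(:,:,i)^{*}\tilde{\mc{A}}(:,:,i)+\lambda I_k\bigr)^{-1}\tilde{\mc{A}}(:,:,i)^{*}\,\tilde{\mc{C}}(:,:,i)\,\tilde{\mc{B}}(:,:,i)^{*}\bigl(\tilde{\mc{B}}(:,:,i)\tilde{\mc{B}}(:,:,i)^{*}+\mu I_s\bigr)^{-1},
\end{equation*}
for $i=1,\ldots,p$, where I also use Proposition~\ref{prop-basic}(ii) for the conjugate transposes.

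Next, I would apply Lemma~\ref{tik-mat} to each of the two Tikhonov-type factors on the right-hand side (using both forms of the limit: $(A^{*}A+\lambda I)^{-1}A^{*}\to A^{\dagger}$ on the left of $\tilde{\mc{C}}(:,:,i)$ and $A^{*}(AA^{*}+\mu I)^{-1}\to A^{\dagger}$ on the right). Since matrix multiplication is continuous and only finitely many slices are involved, passing to the limit $\lambda,\mu\to 0$ yields
\begin{equation*}
\lim_{\lambda,\mu\to 0}\wtilde{\mc{X}_{\lambda\mu}}(:,:,i) = \bigl[\tilde{\mc{A}}(:,:,i)\bigr]^{\dagger}\,\tilde{\mc{C}}(:,:,i)\,\bigl[\tilde{\mc{B}}(:,:,i)\bigr]^{\dagger}, \qquad i=1,\ldots,p.
\end{equation*}
By Proposition~\ref{prop-tildag}(ii) each $[\tilde{\mc{A}}(:,:,i)]^{\dagger}$ equals the $i$-th frontal slice of $\wtilde{\mc{A}^{\dagger}}$, and similarly for $\mc{B}$. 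Therefore the limit tensor coincides slice-wise with $\wtilde{\mc{A}^{\dagger}}\Delta\tilde{\mc{C}}\Delta\wtilde{\mc{B}^{\dagger}}$, which by Proposition~\ref{prop-basic}(i) equals $\wtilde{\mc{A}^{\dagger}\m\mc{C}\m\mc{B}^{\dagger}}$. Applying $\times_3 M^{-1}$ to undo the hat operation gives $\mc{X}_{\lambda\mu}\to\mc{A}^{\dagger}\m\mc{C}\m\mc{B}^{\dagger}$, where the mode of convergence (entrywise, or equivalently in $\|\cdot\|_M$) is guaranteed by taking the finite maximum over $i$ in the slice-wise limits. This proves part (i).

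Part (ii) is then immediate: by Theorem~\ref{thm-4.3}, $\mc{A}^{\dagger}\m\mc{C}\m\mc{B}^{\dagger}$ is the minimum-norm least-squares solution of \eqref{eq21}, so $\mc{X}_{\lambda\mu}$ converges to it as $\lambda,\mu\to 0$. The only bookkeeping hurdle I anticipate is keeping the two limits ($\lambda$ on the left, $\mu$ on the right) independent and making sure the slice-wise application of Lemma~\ref{tik-mat} commutes cleanly with the face-product multiplication by $\tilde{\mc{C}}(:,:,i)$ in the middle; this is handled by the standard fact that matrix multiplication is jointly continuous, so the two limits may be taken in any order or simultaneously.
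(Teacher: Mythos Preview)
Your proposal is correct and follows essentially the same approach the paper indicates: the paper simply remarks that the corollary ``can be obtained by applying Lemma~\ref{tik-mat} and Theorem~\ref{thm-4.3}'' without spelling out details, and your slice-wise transfer to the hat domain, application of Lemma~\ref{tik-mat} to each frontal slice, and invocation of Proposition~\ref{prop-tildag}(ii) to reassemble $\mc{A}^{\dagger}\m\mc{C}\m\mc{B}^{\dagger}$ is precisely the natural way to carry this out. Your handling of the two independent limits via joint continuity of matrix multiplication is also appropriate.
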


\begin{algorithm}[H]
\caption{Two-step Tikhonov's regularized solution for $\mc{A}\m\mc{X}\m\mc{B}=\mc{C}$} \label{alg:mtik}
\begin{algorithmic}[1]
\State {\bf Input} $M\in\mbc^{p\times p}$, $\mc{A}\in\mbc^{m\times k\times p}$, $\mc{B}\in\mbc^{s\times n\times p}$, $\mc{C}\in\mbc^{m\times n\times p}$, $\lambda$ and $\mu$
\State Compute $\mc{T}_{\lambda}=\left(\mc{A}^*\m\mc{A}+\lambda \mc{I}_{kkp}\right)^{-1},~\mc{T}_{\mu}=\left(\mc{B}\m\mc{B}^*+\lambda \mc{I}_{ssp}\right)^{-1}$,~$\mc{C}_1=\mc{A}^*\m\mc{C}\m\mc{B}^*$

\State Compute $\mc{X}_{\lambda\mu}=\mc{T}_{\lambda}\m\mc{C}_1\m\mc{T}_{\mu}$

\State \Return The regularized solution $\mc{X}_{\lambda\mu}$.
 \end{algorithmic}\label{Alg-reg}
\end{algorithm}

\subsection{Image deblurring}
The problem of color image reconstruction from a blurred observation can be formulated using a three-channel representation framework. Given a blurred color image $\mc{C} \in \mathbb{R}^{m \times n \times 3}$, we can decompose it into its constituent channels $\mc{C}^{(1)}, \mc{C}^{(2)}, \mc{C}^{(3)} \in \mathbb{R}^{m \times n}$, where each channel represents the intensity values for a specific color component. Our objective is to reconstruct the true, error-free color image $\mc{X} \in \mathbb{R}^{k \times l \times 3}$ by computing its corresponding channel components $X^{(1)}, X^{(2)}, X^{(3)} \in \mathbb{R}^{k \times l}$, which we have to compute using the following equation. 
\begin{equation}\label{debleq1}
\mc{A}\m\mc{X} \m \mc{B}= \mc{C}, ~~~\mc{A} \in \mathbb{R}^{m\times k \times 3},~\ \mc{X}\in \mathbb{R}^{k\times l \times 3}, ~\mc{B}\in \mathbb{R}^{l\times n \times 3} \text{~~and~~} \mc{C}\in \mathbb{R}^{m\times n \times 3} 
\end{equation}
The frontal slices of  $\mc{A}$ and $\mc{B}$ are considered as the  vertical and horizontal within-channel blurring matrices, respectively. In view of cross-channel blurring, the entries of $\mc{A}$ are defined by 
\begin{equation*}
\mc {A}(:,:,k)=\delta_k \mc{A}^{(k)}, \text{~for~} k=1,2,3, \text{~where~} \delta_k \in \mathbb{R} \text{~satisfying~} \displaystyle\sum_{k=1}^3\delta_k = 1,
\end{equation*}
and 
 \begin{equation*}
\mc{A}^{(k)}(i,j)  =\left\{\begin{array}{ll}
\frac{1}{\sigma_v \sqrt{2 \pi}} e^{-\frac{(i-j)^{2}}{2 \sigma_v^{2}}}, & |i-j| \leq b_v \\
0, & \text { otherwise }
\end{array}.\right.
 \end{equation*}
Here $\sigma_v$ controls the amount of smoothing, that is, the more ill-posed the problem is when $\sigma_v$ is larger and $b_v$ denotes the bandwidth.
Further, the entries of $\mc{B}$ are defined as $\mc{B}(:,:,1)=\mc{A}(:,:,1)^T$, and $\mc{B}(:,:,2)=\mc{B}(:,:,3)=0$.
\begin{figure}[H]
\begin{center}
\subfigure[]{\includegraphics[height=4.6cm]{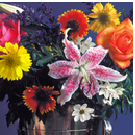}}~~
\subfigure[]{\includegraphics[height=4.6cm]{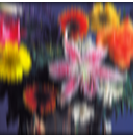}}~~
\subfigure[]{\includegraphics[height=4.6cm]{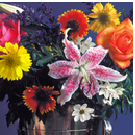}}
\caption{A visual comparison showcasing (a) the original true image at $128 \times 128$ resolution, (b) the degraded version affected by blurring and noise, and (c) the result of the reconstruction process aimed at restoring the image quality.
}\label{im1}
\subfigure[]{\includegraphics[height=4.6cm]{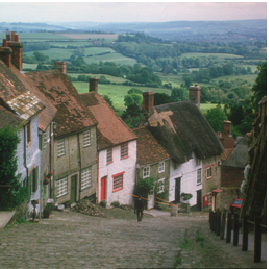}}~~
\subfigure[]{\includegraphics[height=4.6cm]{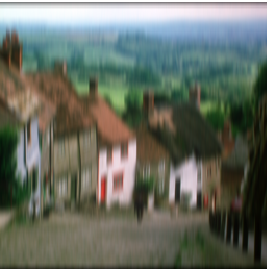}}~~
\subfigure[]{\includegraphics[height=4.6cm]{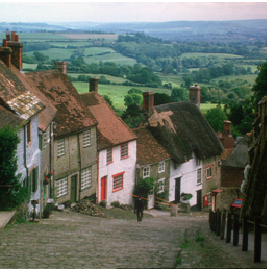}}
\caption{A visual comparison showcasing (a) the original true image at $256 \times 256$ resolution, (b) the degraded version affected by blurring and noise, and (c) the result of the reconstruction process aimed at restoring the image quality.
}\label{im2}
\end{center}
\end{figure}

In our numerical experiment, we focused on analyzing two color images with dimensions of $128\times 128\times 3 $ and $256\times 256\times 3 $ pixels. To simulate the effects of blurring, we used the following specific parameters: $\sigma_v=4$ and a blur factor $b_v=30$. The blurred image, denoted $\mc{B}$, was generated using the equation $\mc{B}=\mc{A}\m\mc{X}+\mc{N}$. Here, $\mc{N}$ represents a noise tensor created by applying Gaussian noise to each slice, with a mean of $0$ and a variance of $10^{-3}$. For the blurring process, we implemented cross-channel blurring, where the values were set as $\delta_1=0.75$, $\delta_2=\delta_3=0.25$. The original, unaltered images can be seen in Figure \ref{im1} (a) and Figure \ref{im2} (a), while the processed images, which have been blurred and affected by noise, are presented in Figure \ref{im1} (b) and Figure \ref{im2} (b).
To recover the true images from the altered versions, we applied a regularized approach as outlined in Algorithm \ref{Alg-reg}. The reconstructed image is shown in Figure \ref{im1} (c) and Figure \ref{im2} (c), which shows the efficiency of the algorithm. 

\section{Conclusion}\label{Section5}
We introduced a novel tensor-based two-step iterative method for $\mc{A}\m\mc{X}\m\mc{B}=\mc{C}$, featuring enhanced convergence through preconditioning and parametric optimization. This approach has proven effective in image deblurring applications and shows potential for solving Sylvester tensor equations. Future research directions include determining optimal parameter selections, analyzing the comparative performance of different splitting techniques under various conditions, and extending the parameterized iterative framework to address the Sylvester tensor equations.

\section*{Funding}
\begin{itemize}
\item Ratikanta Behera is supported by the Anusandhan National Research Foundation (ANRF), Government of India, under Grant No. EEQ/2022/001065.
\item Jajati Keshari Sahoo is supported by the Anusandhan National Research Foundation (ANRF), Government of India, under Grant No. SUR/2022/004357.
\end{itemize}

\section*{Conflict of Interest}
The authors assert with confidence that they have no conflicts of interest to disclose, ensuring the integrity of their work.
\section*{Data Availability}
In the context of this article, it is important to clarify that the data sets created or examined during the course of this study can be shared on request.

%\section*{ORCID}
%Jajati Keshari Sahoo~\orcidC \href{https://orcid.org/0000-0001-6104-5171}{ \hspace{2mm}\textcolor{lightblue}{https://orcid.org/0000-0001-6104-5171}} \\
%Ratikanta Behera~\orcidA \href{https://orcid.org/0000-0002-6237-5700}{ \hspace{2mm}\textcolor{lightblue}{ https://orcid.org/0000-0002-6237-5700}}\\

\bibliographystyle{abbrv}

\end{document}